\numberwithin{equation}{section}
\def\RR{{\mathbb R}}
\def\MM{{\mathbb M}}
\def\ZZ{{\mathbb Z}}
\def\NN{{\mathbb N}}
\def\UU{{\mathbb U}}
\def\eps{\varepsilon}
\def\cupp{\mathop{\cup}}
\def\ell{l}
\def\mint{{{\bf-}\!\!\!\!\!\!\hspace{-.1em}\int}}
\def\mmint{{{\bf-}\!\!\!\!\!\hspace{-.1em}\int}}
\def\limsup{\mathop{\overline{\lim}}}
\def\liminf{\mathop{\underline{\lim}}}
\newtheorem{theorem}{Theorem}[section]
\newtheorem{lemma}[theorem]{Lemma}
\newtheorem{proposition}[theorem]{Proposition}
\newtheorem{corollary}[theorem]{Corollary}
\theoremstyle{definition}
\newtheorem{definition}[theorem]{Definition}
\theoremstyle{remark}
\newtheorem{remark}[theorem]{Remark}
\title[$\Gamma$-convergence of nonconvex integrals in Cheeger-Sobolev spaces]{\boldmath$\Gamma$\unboldmath-convergence of nonconvex integrals in Cheeger-Sobolev spaces and homogenization}
\author{\sc Omar Anza Hafsa}
\address{({\rm Omar Anza Hafsa}) UNIVERSITE DE NIMES, Laboratoire MIPA, Site des Carmes, Place Gabriel P\'eri, 30021 N\^\i mes, France and LMGC, UMR-CNRS 5508, Place Eug\`ene Bataillon, 34095 Montpellier, France.}
\email{omar.anza-hafsa@unimes.fr}
\author{Jean-Philippe Mandallena}
\address{({\rm Jean-Philippe Mandallena}) UNIVERSITE DE NIMES, Laboratoire MIPA, Site des Carmes, Place Gabriel P\'eri, 30021 N\^\i mes, France.}
\email{jean-philippe.mandallena@unimes.fr}
\keywords{Relaxation, homogenization, $\Gamma$-convergence, nonconvex integral, metric measure space, Cheeger-Sobolev space}
\begin{document}

\maketitle

\begin{abstract}
We study $\Gamma$-convergence of nonconvex variational integrals of the calculus of variations in the setting of Cheeger-Sobolev spaces. Applications to relaxation and homogenization are given.
\end{abstract}


\section{Introduction}

Let $(X,d,\mu)$ be a metric measure space, where $(X,d)$ is a length space which is complete, separable and locally compact, and $\mu$ is a positive Radon measure on $X$. Let $p>1$ be a real number and let $m\geq 1$ be an integer. Let $\Omega\subset X$ be a bounded open set  and let $\mathcal{O}(\Omega)$ be the class of open subsets of $\Omega$. In this paper we consider a family of variational integrals $E_t:W^{1,p}_\mu(\Omega;\RR^m)\times \mathcal{O}(\Omega)\to[0,\infty]$ defined by
\begin{equation}\label{Metric-Funct-1}
E_t(u,A):=\int_A L_t(x,\nabla_\mu u(x))d\mu(x),
\end{equation}
where $L_t:\Omega\times\MM\to[0,\infty]$ is a family of Borel measurable integrands  depending on a parameter $t>0$ and not necessarily convex with respect to $\xi\in\MM$, where $\MM$ denotes the space of real $m\times N$ matrices. The space $W^{1,p}_\mu(\Omega;\RR^m)$ denotes the class of $p$-Cheeger-Sobolev functions from $\Omega$ to $\RR^m$  and $\nabla_\mu u$ is the $\mu$-gradient of $u$ (see \S 3.1 for more details). 

We are concerned with the problem of computing the variational limit, in the sense of the $\Gamma$-convergence (see Definition \ref{Def-Gamma-cvg}), of the family $\{E_t\}_{t>0}$, as $t\to\infty$, to a variational integral $E_\infty:W^{1,p}_\mu(\Omega;\RR^m)\times \mathcal{O}(\Omega)\to[0,\infty]$ of the type
\begin{equation}\label{Metric-Funct-2}
E_\infty(u,A)=\int_A L_\infty(x,\nabla_\mu u(x))d\mu(x)
\end{equation}
with $L_\infty:\Omega\times\MM\to[0,\infty]$ which does not depend on the parameter $t$. When $L_\infty$ is independent of the variable $x$, the procedure of passing from \eqref{Metric-Funct-1} to \eqref{Metric-Funct-2} is referred as homogenization and was studied by many authors in the euclidean case, i.e., when the metric measure space $(X,d,\mu)$ is equal to $\RR^N$ endowed with the euclidean distance and the Lebesgue measure, see \cite{braides-defranceschi98} and the references therein. In this paper we deal with the metric measure and non-euclidean case. Such a attempt for dealing with integral representation problems of the calculus of variations in the setting of metric measure spaces was initiated in \cite{AHM15} for relaxation, see also \cite{mocanu05,HakKinPaLe14}. In fact, the interest of considering a general measure is that its support can modeled an hyperelastic structure together with its singularities like for example thin dimensions, corners, junctions, etc (for related works, see \cite{boubusep97, ansini-braides-piat99, jpm00, zhikov01,  bouchitte-fragala01, zhikov02, bouchitte-fragala02b, bouchitte-fragala02a, CZLP02, oah-jpm03, fragala03, bouchitte-fragala03, oah-jpm04, bouchitte-fragala04, jpm05, braides-piat08}). Such mechanical singular objects naturally lead to develop calculus of variations in the setting of metric measure spaces. Indeed, for example, a low multi-dimensional structures can be described by a finite number of smooth compact manifolds $S_i$ of dimension $k_i$ on which a superficial measure $\mu_i=\mathcal{H}^{k_i}|_{S_i}$ is attached. Such a situation leads to deal with the finite union of manifolds $S_i$, i.e., $X=\cup_i S_i$, together with the finite sum of measures $\mu_i$, i.e., $\mu=\sum_i\mu_i$, whose mathematical framework is that of metric measure spaces (for more examples, we refer the reader to \cite{boubusep97, zhikov02, CZLP02} and \cite[Chapter 2, \S10]{ChechkinPiatnitskiShamaev07} and the references therein).

\medskip

The plan of the paper is as follows. In the next section, we state the main results, see Theorem \ref{MainTheorem} (and Corollary \ref{Coro-MainResult}), Corollary \ref{Coro-Relax} and Theorems \ref{Coro-Homogenization} and \ref{Coro-Homogenization2}. In fact, Corollary \ref{Coro-Relax} is a relaxation result that we already proved in \cite{AHM15}. Here we obtain it by applying Theorem \ref{MainTheorem} which is a general $\Gamma$-convergence result in the $p$-growth case. Theorem \ref{Coro-Homogenization}, which is also a consequence of Theorem \ref{MainTheorem}, is a homogenization theorem of Braides-M\"uller type (see \cite{braides85,muller87}) in the setting of metric measure spaces. Note that to obtain such a metric homogenization theorem we  need to make some refinements on our general framework (see Section 2.3 and especially Definitions \ref{Def-appli-hom-1}, \ref{periodic-H-mms}, \ref{Def-appli-hom-2}, \ref{New-def-Revised-version-0}, \ref{New-def-Revised-version} and \ref{mu-PeriOdiciTY-Def}) in order to establish a subadditive theorem (see Theorem \ref{ST-MMspace}) of Ackoglu-Krengel type (see \cite{akcoglu-krengel81}). Theorem \ref{Coro-Homogenization2}, which generalizes Theorem \ref{Coro-Homogenization}, aims to deal with homogenization on low dimensional structures. In Section 3 we give the auxiliary results that we need for proving Theorem \ref{MainTheorem}. Then, Section 4 is devoted to the proof of Theorem \ref{MainTheorem}. Finally, Theorems \ref{ST-MMspace}, \ref{Coro-Homogenization} and \ref{Coro-Homogenization2} are proved in Section 5.

\subsubsection*{Notation} The open and closed balls centered at $x\in X$ with radius $\rho>0$ are denoted by:
\begin{trivlist}
\item[] $Q_\rho(x):=\Big\{y\in X:d(x,y)<\rho\Big\};$
\item[] $\overline{Q}_\rho(x):=\Big\{y\in X:d(x,y)\leq\rho\Big\}.$
\end{trivlist}
For $x\in X$ and $\rho>0$ we set 
$$
\partial Q_\rho(x):=\overline{Q}_\rho(x)\setminus Q_\rho(x)=\Big\{y\in X:d(x,y)=\rho\Big\}.
$$
For $A\subset X$, the diameter of $A$ (resp. the distance from a point $x\in X$ to the subset $A$) is defined by ${\rm diam}(A):=\sup_{x,y\in A}d(x,y)$ (resp. ${\rm dist}(x,A):=\inf_{y\in A}d(x,y)$).

The symbol $\mmint$ stands for the mean-value integral 
$$
\mint_Bf d\mu={1\over\mu(B)}\int_Bf d\mu.
$$

\section{Main results}

\subsection{The \boldmath$\Gamma$\unboldmath-convergence theorem} Here and subsequently, we assume that $\mu$ is doubling on $\Omega$, i.e., there exists a constant $C_d\geq 1$ (called doubling constant) such that 
\begin{equation}\label{D-meas}
\mu\left(Q_\rho(x)\right)\leq C_d \mu\left(Q_{\rho\over 2}(x)\right)
\end{equation}
for all $x\in \Omega$ and all $\rho>0$, and $\Omega$ supports a weak $(1,p)$-Poincar\'e inequality, i.e., there exist $C_P>0$ and $\sigma\geq 1$ such that for every $x\in \Omega$ and every $\rho>0$, 
\begin{equation}\label{1-p-PI}
\mint_{Q_\rho(x)}\left|f- \mint_{Q_\rho(x)} f d\mu\right|d\mu\leq \rho C_P\left(\mint_{Q_{\sigma\rho}(x)} g^p d\mu\right)^{1\over p}
\end{equation}
for every $f\in L^p_\mu(\Omega)$ and every $p$-weak upper gradient $g\in L^p_\mu(\Omega)$ for $f$. (For the definition of the concept of $p$-weak upper gradient, see Definition \ref{Def-p-weak-upper-gradient}.)

For each $t>0$, let $L_t:\Omega\times\MM\to[0,\infty]$ be a Borel measurable integrand. We assume that $L_t$ has $p$-growth, i.e., there exist $\alpha,\beta>0$, which do not depend on $t$, such that
\begin{equation}\label{Hyp1}
 \alpha\left|\xi\right|^p\leq L_t(x,\xi)\leq \beta\left(1+\left|\xi\right|^p\right)
\end{equation}
for all $\xi\in\MM$ and $\mu$-a.e. $x\in \Omega$.

Denote the $\Gamma$-limit inf and the $\Gamma$-limit sup of $E_t$ as $t\to\infty$ with respect to the strong convergence of $L^p_\mu(\Omega;\RR^m)$ by $\Gamma(L^p_\mu)$-$\liminf_{t\to\infty} E_t$ and $\Gamma(L^p_\mu)$-$\limsup_{t\to\infty} E_t$ which are defined by:   

\smallskip

\begin{trivlist}
\item $\displaystyle\Gamma(L^p_\mu)\hbox{-}\liminf\limits_{t\to\infty} E_t(u;A):=\inf\left\{\liminf\limits_{t\to\infty}E_t(u_t,A):u_t\stackrel{L^p_\mu}{\to} u\right\};$
\item
\item $\displaystyle\Gamma(L^p_\mu)\hbox{-}\limsup\limits_{t\to\infty} E_t(u;A):=\inf\left\{\limsup\limits_{t\to\infty}E_n(u_t,A):u_t\stackrel{L^p_\mu}{\to} u\right\}$
\end{trivlist}
for all $u\in W^{1,p}_\mu(\Omega;\RR^m)$ and all $A\in\mathcal{O}(\Omega)$.
\begin{definition}[\cite{degiorgi-franzoni75,degiorgi75}]\label{Def-Gamma-cvg}
The family $\{E_t\}_{t>0}$ of variational integrals is said to be $\Gamma(L^p_\mu)$-convergent to the variational functional $E_\infty$ as $t\to\infty$ if 
$$
\Gamma(L^p_\mu)\hbox{-}\liminf_{t\to\infty} E_t(u,A)\geq E_\infty(u,A)\geq \Gamma(L^p_\mu)\hbox{-}\limsup_{t\to\infty} E_t(u,A),
$$
for any $u\in W^{1,p}_\mu(\Omega;\RR^m)$ and any $A\in\mathcal{O}(\Omega)$, and we then write
$$
\Gamma(L^p_\mu)\hbox{-}\lim_{t\to\infty} E_t(u,A)=E_\infty(u,A).
$$
(For more details on the theory of $\Gamma$-convergence we refer to \cite{dalmaso93}.)
\end{definition}

For each $t>0$ and each $\rho>0$, let $\mathcal{H}_\mu^\rho L_t:\Omega\times \MM\to[0,\infty]$ be given by
\begin{equation}\label{t-mu-quasiconvexification}
\mathcal{H}_\mu^\rho L_t(x,\xi):=\inf\left\{\mint_{Q_\rho(x)}L_t(y,\xi+\nabla_\mu w(y))d\mu(y):w\in W^{1,p}_{\mu,0}(Q_\rho(x);\RR^m)\right\}
\end{equation}
where the space $W^{1,p}_{\mu,0}(Q_\rho(x);\RR^m)$ is the closure of 
$$
{\rm Lip}_0(Q_\rho(x);\RR^m):=\Big\{u\in{\rm Lip}(\Omega;\RR^m):u=0\hbox{ on }\Omega\setminus Q_\rho(x)\Big\} 
$$
with respect to the $W^{1,p}_\mu$-norm,  where ${\rm Lip}(\Omega;\RR^m):=[{\rm Lip}(\Omega)]^m$ with ${\rm Lip}(\Omega)$ denoting the algebra of Lipschitz functions from $\Omega$ to $\RR$. The main result of the paper is the following.
\begin{theorem}\label{MainTheorem}
If \eqref{Hyp1} holds then{\rm:}
\begin{eqnarray}
&&\Gamma(L^p_\mu)\hbox{-}\displaystyle\liminf\limits_{t\to\infty} E_t(u;A)\geq\int_A \limsup_{\rho\to 0}\liminf_{t\to\infty}\mathcal{H}^\rho_\mu L_t(x,\nabla_\mu u(x))d\mu(x);\label{MT-Eq1}\\
&&\Gamma(L^p_\mu)\hbox{-}\displaystyle\limsup\limits_{t\to\infty} E_t(u;A)=\int_A \lim_{\rho\to 0}\limsup_{t\to\infty}\mathcal{H}^\rho_\mu L_t(x,\nabla_\mu u(x))d\mu(x)\label{MT-Eq2}
\end{eqnarray}
for all $u\in W^{1,p}_\mu(\Omega;\RR^m)$ and all $A\in\mathcal{O}(\Omega)$.
\end{theorem}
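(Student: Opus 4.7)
The plan is to apply the Fonseca--M\"uller blow-up technique to identify the densities of the $\Gamma$-liminf and $\Gamma$-limsup, adapted to the metric measure setting through Cheeger's $\mu$-differentiability, the doubling condition, and the weak $(1,p)$-Poincar\'e inequality (in the spirit of \cite{AHM15}). Set
\begin{equation*}
\underline{E}(u,A):=\Gamma(L^p_\mu)\hbox{-}\liminf_{t\to\infty}E_t(u,A),\quad \overline{E}(u,A):=\Gamma(L^p_\mu)\hbox{-}\limsup_{t\to\infty}E_t(u,A).
\end{equation*}
The two-sided growth \eqref{Hyp1} yields $\underline{E}(u,A)\le \overline{E}(u,A)\le\beta\mu(A)+\beta\int_A|\nabla_\mu u|^p\,d\mu$. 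A standard localization argument (``fundamental estimate'' via Lipschitz cut-offs, plus a De Giorgi--Letta measure-extension criterion) then shows that $A\mapsto\underline{E}(u,A)$ and $A\mapsto\overline{E}(u,A)$ are traces on $\mathcal{O}(\Omega)$ of Borel measures absolutely continuous with respect to $\mu$, with Radon--Nikodym densities $\underline{\Psi}_u$ and $\overline{\Psi}_u$. Since $\mu$ is doubling, Lebesgue differentiation gives, for $\mu$-a.e.\ $x_0$, that $\underline{\Psi}_u(x_0)$ and $\overline{\Psi}_u(x_0)$ are the limits of $\underline{E}(u,Q_\rho(x_0))/\mu(Q_\rho(x_0))$ and $\overline{E}(u,Q_\rho(x_0))/\mu(Q_\rho(x_0))$ as $\rho\to 0$.

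For the lower bound \eqref{MT-Eq1}, I would fix $x_0$ in the $\mu$-full-measure subset where $u$ is Cheeger-$\mu$-differentiable with $\nabla_\mu u(x_0)=\xi$, the above limits hold, and $\mint_{Q_\rho(x_0)}|\nabla_\mu u-\xi|^p\,d\mu\to 0$ as $\rho\to 0$. Pick a near-optimal $u_t\stackrel{L^p_\mu}{\to}u$ with $\liminf_{t\to\infty}E_t(u_t,Q_\rho(x_0))\le \underline{E}(u,Q_\rho(x_0))+\varepsilon$, and modify $u_t$ on the annulus $Q_\rho(x_0)\setminus Q_{(1-\delta)\rho}(x_0)$ by interpolating with a $\mu$-affine approximation $\ell_\xi$ of $u$ (with $\nabla_\mu\ell_\xi=\xi$, supplied by $\mu$-differentiability) through a Lipschitz cut-off $\varphi_\delta$ satisfying $|\nabla_\mu\varphi_\delta|\le C/(\rho\delta)$. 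The modified $\tilde u_t-\ell_\xi$ lies in $W^{1,p}_{\mu,0}(Q_\rho(x_0);\RR^m)$ and is therefore admissible for $\mathcal{H}^\rho_\mu L_t(x_0,\xi)$; \eqref{Hyp1}, the Poincar\'e inequality, and the Lebesgue-point property bound the annular correction by a quantity $r(\rho,\delta)$ with $\lim_{\rho\to 0}\lim_{\delta\to 0}r(\rho,\delta)/\mu(Q_\rho(x_0))=0$. Dividing by $\mu(Q_\rho(x_0))$ and taking successively $\liminf_{t\to\infty}$, $\limsup_{\rho\to 0}$, and $\lim_{\delta\to 0}$ yields $\underline{\Psi}_u(x_0)\ge\limsup_{\rho\to 0}\liminf_{t\to\infty}\mathcal{H}^\rho_\mu L_t(x_0,\xi)$, whence \eqref{MT-Eq1} upon integration.

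For \eqref{MT-Eq2}, the upper bound is obtained by fixing $\rho>0$, covering $\mu$-almost all of $A$ by a disjoint Vitali family $\{Q_\rho(x_i)\}_{i\in I_\rho}$ (available under doubling), choosing almost-optimal $w_i^t\in W^{1,p}_{\mu,0}(Q_\rho(x_i);\RR^m)$ for $\mathcal{H}^\rho_\mu L_t(x_i,\nabla_\mu u(x_i))$, and setting $u_t^\rho:=u+\sum_i w_i^t$. The growth \eqref{Hyp1} and the Poincar\'e inequality give uniform $L^p_\mu$-smallness of the $w_i^t$, so that $u_t^\rho\to u$ in $L^p_\mu$ as $\rho\to 0$ uniformly in $t$ and a diagonal extraction supplies a recovery sequence. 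A Lusin-type approximation plus the Lebesgue-point property permits replacing $\nabla_\mu u(\cdot)$ by $\nabla_\mu u(x_i)$ inside $L_t$ on each ball (the bad complement controlled by $p$-growth), yielding
\begin{equation*}
E_t(u_t^\rho,A)\le\sum_{i\in I_\rho}\mu(Q_\rho(x_i))\,\mathcal{H}^\rho_\mu L_t(x_i,\nabla_\mu u(x_i))+o_\rho(1).
\end{equation*}
Taking $\limsup_{t\to\infty}$ and then $\lim_{\rho\to 0}$ via Lebesgue differentiation gives $\overline{\Psi}_u\le\lim_{\rho\to 0}\limsup_{t\to\infty}\mathcal{H}^\rho_\mu L_t(\cdot,\nabla_\mu u(\cdot))$ $\mu$-a.e., while the matching lower estimate $\overline{\Psi}_u\ge\liminf_{\rho\to 0}\limsup_{t\to\infty}\mathcal{H}^\rho_\mu L_t(\cdot,\nabla_\mu u(\cdot))$ follows from the blow-up argument of the previous paragraph applied to $\overline{E}$. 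Sandwiching identifies the two bounds, forces the $\rho\to 0$ limit to exist, and gives \eqref{MT-Eq2}.

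The hard part will be the cut-off/boundary-layer surgery in the metric setting: building Lipschitz interpolations with controlled $\mu$-gradient on annuli $Q_\rho\setminus Q_{(1-\delta)\rho}$, making sense of the ``affine'' comparison $\ell_\xi$ with constant $\mu$-gradient $\xi$, and bounding the annular energy without a Euclidean structure. This demands the simultaneous use of doubling (Vitali covers and annular volume control), the weak $(1,p)$-Poincar\'e inequality (to promote $L^p_\mu$-smallness of $u_t-u$ into gradient control of $\varphi_\delta(u_t-u)$), and Cheeger's $\mu$-differentiability (to interpret $\ell_\xi$). Once this metric fundamental estimate is in hand, the rest of the argument reduces to the standard blow-up template.
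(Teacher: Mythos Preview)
Your overall plan coincides with the paper's: De Giorgi--Letta for the measure extension, Lebesgue differentiation on balls, and annular cut-off surgery to pass to the $\mu$-affine comparison $u_x$. The lower-bound sketch is essentially right (you will need the usual De Giorgi layer-averaging to kill the annular term $\int_{Q_\rho\setminus Q_{(1-\delta)\rho}}|\nabla_\mu u_t|^p\,d\mu$, and note that it is the $L^p_\mu$-convergence $u_t\to u$ together with $p$-growth, not the Poincar\'e inequality, that controls $D_\mu\varphi_\delta\otimes(u_t-\ell_\xi)$).

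The genuine gap is in the upper bound. With $u_t^\rho:=u+\sum_i w_i^t$ you obtain on each ball $\int_{Q_\rho(x_i)}L_t\big(y,\nabla_\mu u(y)+\nabla_\mu w_i^t(y)\big)\,d\mu$, whereas $w_i^t$ is almost optimal for $\int_{Q_\rho(x_i)}L_t\big(y,\nabla_\mu u(x_i)+\nabla_\mu w_i^t(y)\big)\,d\mu$. Your ``Lusin-type approximation plus Lebesgue-point property'' cannot bridge these: only the two-sided $p$-growth \eqref{Hyp1} is assumed, with \emph{no} modulus of continuity of $L_t(y,\cdot)$, so closeness of $\nabla_\mu u(y)$ to $\nabla_\mu u(x_i)$ says nothing about $L_t(y,\nabla_\mu u(y)+\eta)$ versus $L_t(y,\nabla_\mu u(x_i)+\eta)$. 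The paper sidesteps this by first introducing the Dirichlet cell problem with boundary datum $u$ itself,
\[
{\rm m}^t_u(Q):=\inf\Big\{E_t(v,Q):v-u\in W^{1,p}_{\mu,0}(Q;\RR^m)\Big\},\qquad \overline{\rm m}_u:=\limsup_{t\to\infty}{\rm m}^t_u,
\]
showing that $\overline{E}(u,\cdot)$ equals the Vitali envelope $\overline{\rm m}_u^*$ (so the Vitali-cover recovery never leaves the boundary datum $u$), and only \emph{afterwards} swapping $u\leadsto u_x$ via an annular cut-off. The crucial point is that this swap estimates the annular integrand $L_t\big(y,D_\mu\varphi\otimes(u-u_x)+\varphi\nabla_\mu u+(1-\varphi)\nabla_\mu u(x)\big)$ purely through the upper $p$-growth bound---no continuity in $\xi$ is ever used. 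You can repair your construction in the same spirit by setting, on each ball, $u_t^\rho:=\varphi_i(\ell_{x_i}+w_i^t)+(1-\varphi_i)u$ with an inner cut-off $\varphi_i$, so that the gradient is exactly $\nabla_\mu u(x_i)+\nabla_\mu w_i^t$ on an inner ball and the annular error is controlled by $p$-growth and $\mu$-differentiability; but this is precisely the paper's Step~5 argument transplanted into the cover.
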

As a direct consequence, we have
\begin{corollary}\label{Coro-MainResult}
If \eqref{Hyp1} holds and if 
\begin{equation}\label{Equality-Coro}
\liminf_{t\to\infty}\mathcal{H}^\rho_\mu L_t(x,\xi)=\limsup_{t\to\infty}\mathcal{H}^\rho_\mu L_t(x,\xi)
\end{equation}
for $\mu$-a.e. $x\in\Omega$, all $\rho>0$ and all $\xi\in\MM$, then
$$
\Gamma(L^p_\mu)\hbox{-}\displaystyle\lim\limits_{t\to\infty} E_t(u;A)=\int_A \lim_{\rho\to0}\lim_{t\to\infty}\mathcal{H}^\rho_\mu L_t(x,\nabla_\mu u(x))d\mu(x)
$$
for all $u\in W^{1,p}_\mu(\Omega;\RR^m)$ and all $A\in\mathcal{O}(\Omega)$. 
\end{corollary}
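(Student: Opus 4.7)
The plan is to derive Corollary \ref{Coro-MainResult} directly from Theorem \ref{MainTheorem}, combining the lower bound \eqref{MT-Eq1} and the equality \eqref{MT-Eq2} by means of the trivial inequality $\Gamma(L^p_\mu)\hbox{-}\liminf_{t\to\infty} E_t\leq \Gamma(L^p_\mu)\hbox{-}\limsup_{t\to\infty} E_t$, using \eqref{Equality-Coro} to identify the two integrands.

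First I would observe that \eqref{Equality-Coro} states precisely that, for $\mu$-a.e.\ $x\in\Omega$, every $\rho>0$ and every $\xi\in\MM$, the $t$-limit $\lim_{t\to\infty}\mathcal{H}^\rho_\mu L_t(x,\xi)$ exists and equals the common value of $\liminf_{t\to\infty}\mathcal{H}^\rho_\mu L_t(x,\xi)$ and $\limsup_{t\to\infty}\mathcal{H}^\rho_\mu L_t(x,\xi)$. Inserting this identification into \eqref{MT-Eq2} yields
$$
\Gamma(L^p_\mu)\hbox{-}\limsup_{t\to\infty} E_t(u,A)=\int_A \lim_{\rho\to 0}\lim_{t\to\infty}\mathcal{H}^\rho_\mu L_t(x,\nabla_\mu u(x))\,d\mu(x),
$$
and in particular the outer $\rho$-limit exists for $\mu$-a.e.\ $x\in A$, since \eqref{MT-Eq2} implicitly asserts the existence of $\lim_{\rho\to 0}\limsup_{t\to\infty}\mathcal{H}^\rho_\mu L_t(x,\nabla_\mu u(x))$.

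Next, substituting \eqref{Equality-Coro} into \eqref{MT-Eq1} turns its right-hand side into $\int_A \limsup_{\rho\to 0}\lim_{t\to\infty}\mathcal{H}^\rho_\mu L_t(x,\nabla_\mu u(x))\,d\mu(x)$. By the existence of the $\rho$-limit just recorded, this $\limsup_{\rho\to 0}$ coincides $\mu$-a.e.\ with $\lim_{\rho\to 0}$, so \eqref{MT-Eq1} and \eqref{MT-Eq2} end up having the same right-hand side. The sandwich
$$
\int_A \lim_{\rho\to 0}\lim_{t\to\infty}\mathcal{H}^\rho_\mu L_t(x,\nabla_\mu u(x))\,d\mu(x)\leq \Gamma(L^p_\mu)\hbox{-}\liminf_{t\to\infty} E_t(u,A)\leq \Gamma(L^p_\mu)\hbox{-}\limsup_{t\to\infty} E_t(u,A)
$$
then forces equality throughout and delivers the stated identity.

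There is no substantive obstacle here: all the analytic content has already been carried by Theorem \ref{MainTheorem}. The only subtlety worth flagging is that \eqref{Equality-Coro} is exactly the hypothesis needed to collapse the inner $\liminf_{t\to\infty}$--$\limsup_{t\to\infty}$ asymmetry of \eqref{MT-Eq1}--\eqref{MT-Eq2}, and one must use the outer $\lim_{\rho\to 0}$ already supplied by \eqref{MT-Eq2} in order to upgrade the $\limsup_{\rho\to 0}$ in \eqref{MT-Eq1} to a genuine $\lim_{\rho\to 0}$.
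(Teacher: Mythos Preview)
Your proposal is correct and matches the paper's own treatment: the paper presents Corollary~\ref{Coro-MainResult} as a direct consequence of Theorem~\ref{MainTheorem} without further argument, and the sandwich you describe is precisely the intended mechanism. The only remark is that you do not even need to upgrade the $\limsup_{\rho\to0}$ in \eqref{MT-Eq1} to a genuine limit: since $\limsup_{\rho\to0}\geq\lim_{\rho\to0}$ whenever the latter exists, the chain $\Gamma\hbox{-}\limsup\geq\Gamma\hbox{-}\liminf\geq\int_A\limsup_{\rho\to0}\ldots\,d\mu\geq\int_A\lim_{\rho\to0}\ldots\,d\mu=\Gamma\hbox{-}\limsup$ already closes.
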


\subsection{Relaxation} The equality \eqref{Equality-Coro} is trivially satisfied when $L_t\equiv L$, i.e., $L_t$ does not depend on the parameter $t$. In such a case, we have 
$$
\Gamma(L^p_\mu)\hbox{-}\displaystyle\lim\limits_{t\to\infty} E_t(u;A)=\inf\left\{\liminf_{t\to\infty}\int_AL(x,\nabla_\mu u_t(x))d\mu(x):u_t\stackrel{L^p_\mu}{\to} u\right\}=:\overline{E}(u,A),
$$
i.e., the $\Gamma(L^p_\mu)$-limit  of $\{E_t\}_{t>0}$ as $t\to\infty$ is simply the $L^p_\mu$-lower semicontinuous envelope of the variational integral $\int_AL(x,\nabla_\mu u)d\mu$. Thus, the problem of computing the $\Gamma$-limit of $\{E_t\}_{t>0}$ becomes a problem of relaxation. We set 
$$
\mathcal{Q}_\mu L(x,\xi):=\lim_{\rho\to 0}\mathcal{H}^\rho_\mu L(x,\xi),
$$
where $\mathcal{H}^\rho_\mu L$ is given by \eqref{t-mu-quasiconvexification} with $L_t$ replaced by $L$, and we naturally call $\mathcal{Q}_\mu L$ the $\mu$-quasiconvexification of $L$. Then, Corollary \ref{Coro-MainResult} implies the following result. 
\begin{corollary}\label{Coro-Relax}
If \eqref{Hyp1} holds then
$$
\overline{E}(u,A)=\int_A\mathcal{Q}_\mu L(x,\nabla_\mu u(x))d\mu(x)
$$
for all $u\in W^{1,p}_\mu(\Omega;\RR^m)$ and all $A\in\mathcal{O}(\Omega)$. 
\end{corollary}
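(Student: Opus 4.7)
The proof is essentially a direct specialization of Corollary \ref{Coro-MainResult} to the case of a single integrand. My plan is to set $L_t\equiv L$ for every $t>0$ and verify that the hypotheses of Corollary \ref{Coro-MainResult} are automatic in this degenerate setting, and then identify the resulting right-hand side with $\int_A\mathcal{Q}_\mu L(x,\nabla_\mu u)d\mu$.

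First I would observe, as noted in the paragraph preceding the statement, that when $L_t\equiv L$ the family $\{E_t\}_{t>0}$ is constant, so by the very definitions of $\Gamma(L^p_\mu)\hbox{-}\liminf$ and $\Gamma(L^p_\mu)\hbox{-}\limsup$ recalled in Section 2.1,
\[
\Gamma(L^p_\mu)\hbox{-}\liminf_{t\to\infty}E_t(u,A)=\Gamma(L^p_\mu)\hbox{-}\limsup_{t\to\infty}E_t(u,A)=\overline{E}(u,A),
\]
so in particular the $\Gamma(L^p_\mu)$-limit exists and equals $\overline{E}(u,A)$. This is a purely formal verification from the definitions of $\Gamma$-convergence and of the relaxed (i.e. $L^p_\mu$-lower semicontinuous) envelope, and requires no estimates.

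Next, since $L_t$ is independent of $t$, the operator $\mathcal{H}^\rho_\mu$ defined in \eqref{t-mu-quasiconvexification} produces a function that is itself independent of $t$, namely $\mathcal{H}^\rho_\mu L_t(x,\xi)=\mathcal{H}^\rho_\mu L(x,\xi)$ for every $t,\rho>0$ and every $(x,\xi)$. Consequently
\[
\liminf_{t\to\infty}\mathcal{H}^\rho_\mu L_t(x,\xi)=\limsup_{t\to\infty}\mathcal{H}^\rho_\mu L_t(x,\xi)=\mathcal{H}^\rho_\mu L(x,\xi),
\]
so the compatibility condition \eqref{Equality-Coro} of Corollary \ref{Coro-MainResult} holds trivially at every point, every radius and every matrix. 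The $p$-growth hypothesis \eqref{Hyp1} for $L$ is the hypothesis we have assumed, so Corollary \ref{Coro-MainResult} applies verbatim and yields
\[
\Gamma(L^p_\mu)\hbox{-}\lim_{t\to\infty}E_t(u,A)=\int_A\lim_{\rho\to0}\mathcal{H}^\rho_\mu L(x,\nabla_\mu u(x))\,d\mu(x).
\]

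Finally, invoking the definition $\mathcal{Q}_\mu L(x,\xi):=\lim_{\rho\to0}\mathcal{H}^\rho_\mu L(x,\xi)$ of the $\mu$-quasiconvexification (the existence of this limit being guaranteed by the same $\lim_{\rho\to0}$ appearing in \eqref{MT-Eq2} of Theorem \ref{MainTheorem}) and combining with the first step, I would conclude
\[
\overline{E}(u,A)=\int_A\mathcal{Q}_\mu L(x,\nabla_\mu u(x))\,d\mu(x)
\]
for every $u\in W^{1,p}_\mu(\Omega;\RR^m)$ and every $A\in\mathcal{O}(\Omega)$. Since all the substantive work is already embedded in Theorem \ref{MainTheorem} (and its corollary), there is no real obstacle here; the only point that merits a brief comment is the identification of the constant $\Gamma$-limit with the relaxed functional $\overline{E}$, which is a routine but not entirely trivial unfolding of definitions.
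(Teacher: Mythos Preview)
Your proposal is correct and follows exactly the same route as the paper: the paper simply observes that when $L_t\equiv L$ the equality \eqref{Equality-Coro} is trivially satisfied and the $\Gamma(L^p_\mu)$-limit coincides with the relaxed functional $\overline{E}$, so that Corollary \ref{Coro-MainResult} directly yields the integral representation with integrand $\mathcal{Q}_\mu L:=\lim_{\rho\to0}\mathcal{H}^\rho_\mu L$. Your write-up merely makes these observations explicit.
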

We thus retrieve \cite[Corollary 2.29]{AHM15}.

\subsection{Homogenization}

In order to apply Theorem \ref{MainTheorem} (and Corollary \ref{Coro-MainResult}) to homogenization, it is necessary to make some refinements on our general setting. These refinements are a first attempt to develop a framework for dealing with homogenization of variational integrals of the calculus of variations in metric measure spaces. 

\medskip

We begin with the following five definitions (see Definition \ref{Def-appli-hom-1} together with Definitions \ref{periodic-H-mms}-\ref{Def-appli-hom-2} and Definitions \ref{New-def-Revised-version-0}-\ref{New-def-Revised-version}) which set a framework to deal with homogenization of variational integrals in Cheeger-Sobolev spaces. Let  ${\rm Homeo}(X)$ be the group of homeomorphisms on $X$ and let $\mathcal{B}(X)$ be the class of Borel subsets of $X$. 

\begin{definition}\label{Def-appli-hom-1}
The metric measure space $(X,d,\mu)$ is called a $(G,\{h_t\}_{t>0})$-metric measure space if it is endowed with a pair $(G,\{h_t\}_{t>0})$, where $G$ and $\{h_t\}_{t>0}$ are subgroups of ${\rm Homeo}(X)$, such that: 
\begin{enumerate}[leftmargin=*]
\item[(a)] the measure $\mu$ is {\em $G$-invariant}, i.e., $g^\sharp\mu=\mu$ for all $g\in G$;
\item[(b)] there exists $\UU\in\mathcal{B}(X)$, which is called the unit cell, such that $\mu\big(\mathring{\UU}\big)\in]0,\infty[$ and $\mu(\partial \UU)=0$ with $\partial \UU=\overline{\UU}\setminus\mathring{\UU}$;
\item[(c)] the family $\{h_t\}_{t>0}$ of homeomorphisms on $X$ is such that: 
\begin{eqnarray}
&&\hbox{$h_1={\rm id}_X$;}\label{Homeo-1}\\
&&\hbox{$h_{st}=h_s{\hskip0.3mm\rm o\hskip0.3mm}h_t\hbox{ for all }s,t>0;$}\label{Homeo-2}\\
&&\hbox{$h_t^\sharp\mu=\mu(h_t(\UU))\mu$ for all $t>0$}.\label{Eq-sub-im}
\end{eqnarray}
\end{enumerate}
\end{definition}

\begin{remark}
Assuming that $(X, d,\mu)$ is a $(G,\{h_t\}_{t>0})$-metric measure space, it is easy to see that
\begin{equation}\label{Eq-sub-1-bis}
\mu(h_{st}(\UU))=\mu(h_s(\UU))\mu(h_t(\UU))
\end{equation}
for all $s,t>0$. In particular, as $\mu(\UU)\not=0$ we have $\mu(h_t(\UU))\not=0$ for all $t>0$, and so we see that $\mu(\UU)=1$ by using \eqref{Eq-sub-im}.
\end{remark}

\begin{definition}\label{periodic-H-mms}
When  $(X, d,\mu)$ is a $(G,\{h_t\}_{t>0})$-metric measure space, we say that  $(X, d,\mu)$ is {\em meshable} if for each $i\in\NN^*$ and each $k\in\NN^*$ there exists a finite subset $G^k_i$ of $G$ such that $(g{\hskip0.3mm\rm o\hskip0.3mm}h_k(\UU))_{g\in G^k_i}$ is a disjointed finite family 
and
\begin{equation}\label{Eq-sub-6}
h_{ik}(\UU)=\cupp_{g\in G^k_i}g{\hskip0.3mm\rm o\hskip0.3mm}h_k(\UU).
\end{equation}
\end{definition}

\begin{remark}\label{Remark-Meshable}
It is easily seen that a $(G,\{h_t\}_{t>0})$-metric measure space $(X,d,\mu)$ is meshable if and only if for each $i\in\NN^*$ and each $k\in\NN^*$ there exists a finite subset $G^k_i$ of $G$ such that $(g{\hskip0.3mm\rm o\hskip0.3mm}h_k(\UU))_{g\in G^k_i}$ is a disjointed finite family of subsets of $h_{ik}(\UU)$ and
\begin{equation}\label{Equa-Sub-1-Bis-Bis}
{\rm card}(G^k_i)=\mu(h_i(\UU)).
\end{equation}
In particular, the cardinal of $G^k_i$ does not depend on $k$. (Here and in what follows, $\NN^*$ denotes the set of integers greater than $1$.)
\end{remark}

\begin{remark}\label{Remark-RN-Asymptotically-periodic}
When $X=\RR^N$ is endowed with the euclidean distance $d_2$ and the Lebesgue measure $\mathcal{L}_N$, we consider $G\equiv\ZZ^N$, $\UU=[0,1[^N=:Y$  and $\{h_t\}_{t>0}$ given by $h_t:\RR^N\to\RR^N$ defined by $h_t(x)=tx$. In this case, for each $i\in\NN^*$ and each $k\in\NN^*$, we have 
$$
G^k_i=\Big\{(kn_1,kn_2,\cdots,kn_N):n_j\in\{0,\cdots,i-1\}\hbox{ with }j\in\{1,\cdots,N\}\Big\}.
$$ 
Note that $G^k_i=kG^1_i$ and so ${\rm card}(G^k_i)$ does not depend on $k$. More precisely, we have ${\rm card}(G^k_i)=i^N=\mathcal{L}_N(h_i(Y))$. In addition, $(\RR^N,d_2,\mathcal{L}_N)$ is meshable.
 \end{remark}

In what follows, $\mathcal{F}(X)$ denotes an arbitrary subclass of $\mathcal{B}(X)$.

\begin{definition}\label{Def-appli-hom-2}
When $(X, d,\mu)$ is a meshable $(G,\{h_t\}_{t>0})$-metric measure space, we say that $(X, d,\mu)$ is {\em asymptotically periodic with respect to} $\mathcal{F}(X)$ if for each  $A\in \mathcal{F}(X)$ and for each $k\in\NN^*$ there exists $t_{A,k}>0$ such that for each $t\geq t_{A,k}$, there exist $k^{-}_{t},k^{+}_{t}\in\NN^*$ and $g^{-}_{t},g^{+}_{t}\in G$ such that: 
\begin{eqnarray} 
&& g^-_{t}{\hskip0.3mm\rm o\hskip0.3mm}h_{kk^-_{t}}(\UU)\subset h_t(A)\subset g^+_{t}{\hskip0.3mm\rm o\hskip0.3mm}h_{kk^+_{t}}(\UU);\label{Hyp-Sub-2}\\
&& \lim_{t\to\infty}{\mu\big(h_{k^+_{t}}(\UU)\big)\over\mu\big(h_{k^-_{t}}(\UU)\big)}=1.\label{Hyp-Sub-1}
\end{eqnarray}
\end{definition}

\begin{remark}\label{Rajout-Revision-Remark}
For $(X,d,\mu)\equiv(\RR^N,d_2,\mathcal{L}_N)$ we consider $G\equiv \ZZ^N$, $\UU=Y$  and $\{h_t\}_{t>0}$ given by $h_t:\RR^N\to\RR^N$ defined by $h_t(x)=tx$ (see Remark \ref{Remark-RN-Asymptotically-periodic}). In particular, we have $g{\hskip0.3mm\rm o\hskip0.3mm}h_k(Y)=kY+g$ for all $k\in\NN^*$ and all $g\in G$. Then $(\RR^N,d_2,\mu)$ is asymptotically periodic with respect to ${\rm Cub}(\RR^N)$, where ${\rm Cub}(\RR^N)$ is the class of open cubes $C$ of $\RR^N$. 

Indeed, if $C=\prod_{i=1}^N]a_i,b_i[$ with $c=b_1-a_1=\cdots=b_N-a_N>0$ and if $k\in\NN^*$, then for every $t\geq {2k\over c}$, \eqref{Hyp-Sub-2} is satisfied  with: 
\begin{trivlist}
\item[] $k^-_t=\big[{t c\over k}\big]-1$ and $k_t^+=\big[{t c\over k}\big]+1$;
\item[] $g_t^-=k(z_t+\hat e)$ and $g^+_t=kz_t$ where $\hat e=(1,\cdots,1)$ and $z=(z_{t}^1,\cdots,z_{t}^N)$ with $z_{t}^i=\big[{ta_i\over k}\big]$  for all $i\in\{1,\cdots, N\}$,
\end{trivlist}
where $\big[x\big]$ denotes the integer part of the real number $x$. Moreover, for such $k^-_t$ and $k^+_t$, it is easily seen that \eqref{Hyp-Sub-1} is verified. 

Nevertheless, $(\RR^N,d_2,\mathcal{L}_N)$ is not asymptotically periodic with respect to ${\rm Ba}(\RR^N)$, where ${\rm Ba}(\RR^N)$ is the class of open balls (with respect to $d_2$) of $\RR^N$.
\end{remark}

In light of Remark \ref{Rajout-Revision-Remark} we introduce another ``weak" notion of ``asymptotic periodicity" together with another ``strong" notion of ``meshability", see Definitions \ref{New-def-Revised-version} and \ref{New-def-Revised-version-0} below which plays the role of Definitions \ref{periodic-H-mms} and \ref{Def-appli-hom-2} (see also Remark \ref{Rajout-Revision-Remark-Bis}). 

\begin{definition}\label{New-def-Revised-version-0}
When $(X, d,\mu)$ is a $(G,\{h_t\}_{t>0})$-metric measure space, we say that $(X, d,\mu)$ is {\em strongly meshable} if  the following four assertions are satisfied:
\begin{enumerate}[leftmargin=*]
\item[(a)] for each finite subset $H$ of $G$, the family $(g(\UU))_{g\in H}$ is finite and disjointed;
\item[(b)] if $H_1$ and $H_2$ are two finite subsets of $G$ such that 
$
\cupp_{g\in H_1}g(\UU)\subset \cupp_{g\in H_2}g(\UU),
$
then $H_1\subset H_2$ and 
$$
\cupp\limits_{g\in H_2}g(\UU)=\bigg(\cupp\limits_{g\in H_1}g(\UU)\bigg)\cup\bigg(\cupp\limits_{g\in H_2\setminus H_1}g(\UU)\bigg);
$$
\item[(c)] for each $i\in\NN^*$ and each $f\in G$ there exists a finite subset $G_i(f)$ of $G$ such that 
$$
f{\hskip0.3mm\rm o\hskip0.3mm}h_{i}(\UU)=\cupp_{g\in G_i(f)}g(\UU);
$$
\item[(d)] for each finite subset $H$ of $G$, there exist $i_H\in\NN^*$ and $f_H\in G$ such that 
$$
\cupp_{g\in H}g(\UU)\subset f_H{\hskip0.3mm\rm o\hskip0.3mm}h_{i_H}(\UU).
$$
\end{enumerate}
\end{definition}

\begin{remark}
The metric measure space $(\RR^N,d_2,\mathcal{L}_N)$ where $G\equiv \ZZ^N$, $\UU=Y$  and $\{h_t\}_{t>0}\equiv\{tx\}_{t>0}$ is strongly meshable with
$$
G_i(z)=\Big\{z+(n_1,n_2,\cdots,n_N):n_j\in\{0,\cdots,i-1\}\hbox{ with }j\in\{1,\cdots,N\}\Big\}
$$
for all $i\in\NN^*$ and all $z\in \ZZ^N$. 
\end{remark}

\begin{definition}\label{New-def-Revised-version}
When $(X, d,\mu)$ is a strongly meshable $(G,\{h_t\}_{t>0})$-metric measure space, we say that $(X, d,\mu)$ is {\em weakly asymptotically periodic with respect to} $\mathcal{F}(X)$ if for each $A\in \mathcal{F}(X)$, each $k\in\NN^*$ and each $t>0$, there exist finite subsets $G^-_{t,k}$ and $G^+_{t,k}$ of $G$ such that the families $(g{\hskip0.3mm\rm o\hskip0.3mm}h_k(\UU))_{g\in G^-_{t,k}}$ and $(g{\hskip0.3mm\rm o\hskip0.3mm}h_k(\UU))_{g\in G^+_{t,k}}$ are disjointed and satisfy the following two properties: 
\begin{eqnarray}
&&\cupp\limits_{g\in G^-_{t,k}}g{\hskip0.3mm\rm o\hskip0.3mm}h_k(\UU)\subset h_t(A)\subset\cupp\limits_{g\in G^+_{t,k}}g{\hskip0.3mm\rm o\hskip0.3mm}h_k(\UU);\label{Hyp-Sub-1-revised}\\
&&\lim_{t\to\infty}{\mu\bigg(\cupp\limits_{g\in G^+_{t,k}}g{\hskip0.3mm\rm o\hskip0.3mm}h_k(\UU)\setminus \cupp\limits_{g\in G^-_{t,k}}g{\hskip0.3mm\rm o\hskip0.3mm}h_k(\UU)\bigg)\over\mu(h_t(A))}=0.\label{Hyp-Sub-3-revised}
\end{eqnarray}
\end{definition}

\begin{remark}\label{Rajout-Revision-Remark-Bis}
From Nguyen and Zessin \cite[Lemma 3.1]{nguyen-zessin79} (see also \cite[Lemma 2.2]{licht-michaille02}) we see that for $(X,d,\mu)\equiv(\RR^N,d_2,\mathcal{L}_N)$ with $G\equiv \ZZ^N$, $\UU=Y$  and $\{h_t\}_{t>0}\equiv\{tx\}_{t>0}$, Definition \ref{New-def-Revised-version} is satisfied with  $\mathcal{F}(X)\equiv {\rm Conv}_{\rm b}(\RR^N)$, where ${\rm Conv}_{\rm b}(\RR^N)$ denotes the class of bounded Borel convex subsets of $\RR^N$. In this case, for each $A\in {\rm Conv}_{\rm b}(\RR^N)$, each $k\in\NN^*$ and each $t>0$, we have:
\begin{trivlist}
\item $G^-_{t,k}=\Big\{z\in k\ZZ^N:z+kY\subset tA\Big\}$;
\item $G^+_{t,k}=\Big\{z\in k\ZZ^N: (z+kY)\cap tA\not=\emptyset\Big\}$.
\end{trivlist}
Thus, $(\RR^N,d_2,\mathcal{L}_N)$ is weakly asymptotically periodic with respect to ${\rm Ba}(\RR^N)$ and ${\rm Cub}(\RR^N)$.
\end{remark}

In the framework of a $(G,\{h_t\}_{t>0})$-metric measure space (see Definition \ref{Def-appli-hom-1}) which is either meshable and asymptotically periodic (Definitions \ref{periodic-H-mms} and \ref{Def-appli-hom-2}) or strongly meshable and weakly asymptotically periodic (see Definitions \ref{New-def-Revised-version-0} and \ref{New-def-Revised-version}), we can establish a subadditive theorem, see Theorem \ref{ST-MMspace}, of Ackoglu-Krengel type (see \cite{akcoglu-krengel81}). Let $\mathcal{B}_0(X)$ denote  the class of Borel subsets $A$ of $X$ such that $\mu(A)<\infty$ and $\mu(\partial A)=0$ with $\partial A=\overline{A}\setminus\mathring{A}$. We first recall the definition of a subadditive (with respect to the disjointed union) and $G$-invariant set function.
\begin{definition}
Let $\mathcal{S}:\mathcal{B}_0(X)\to[0,\infty]$ be a set function.
\begin{enumerate}[leftmargin=*]
\item[(a)] The set function $\mathcal{S}$ is said to be subadditive (with respect to the disjointed union) if 
$$
\mathcal{S}(A\cup B)\leq\mathcal{S}(A)+\mathcal{S}(B)
$$ 
for all $A,B\in\mathcal{B}_0(X)$ such that $A\cap B=\emptyset$. 
\item[(b)] Given a subgroup $G$ of ${\rm Homeo}(X)$, the set function $\mathcal{S}$ is said to be $G$-invariant if
$$
\mathcal{S}\big(g(A)\big)=\mathcal{S}(A)
$$
for all $A\in \mathcal{B}_0(X)$ and all $g\in G$.
\end{enumerate}
\end{definition}
The following result, which is proved in Section 5, will be used in the proof of Theorems \ref{Coro-Homogenization} and \ref{Coro-Homogenization2} below. In what follows $\mathfrak{S}(X)$ denotes a subclass of $\mathcal{B}_0(X)$.
\begin{theorem}\label{ST-MMspace}
Assume that $(X, d,\mu)$ is a $(G,\{h_t\}_{t>0})$-metric measure space which is either meshable and asymptotically periodic or strongly meshable and weakly asymptotically periodic with respect to $\mathfrak{S}(X)$ and $\mathcal{S}:\mathcal{B}_0(X)\to[0,\infty]$ is a subadditive and $G$-invariant set function with the following property{\rm:}
\begin{equation}\label{Sub4}
\mathcal{S}(A)\leq c\mu(A)
\end{equation}
for all $A\in\mathcal{B}_0(X)$ and some $c>0$. Then
$$
\lim_{t\to\infty}{\mathcal{S}\big(h_t(Q)\big)\over\mu\big(h_t(Q)\big)}=\inf_{k\in\NN^*}{\mathcal{S}\left(h_k(\UU)\right)\over\mu(h_k(\UU))}
$$
for all $Q\in \mathfrak{S}(X)$.
\end{theorem}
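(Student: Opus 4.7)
Denote $\ell := \inf_{k \in \NN^*} \mathcal{S}(h_k(\UU))/\mu(h_k(\UU))$. The plan is to prove separately the upper bound $\limsup_{t\to\infty} \mathcal{S}(h_t(Q))/\mu(h_t(Q)) \le \ell$ and the lower bound $\liminf_{t\to\infty} \mathcal{S}(h_t(Q))/\mu(h_t(Q)) \ge \ell$, by exploiting respectively the inscribed tile $g^-_t \circ h_{kk^-_t}(\UU) \subset h_t(Q)$ and the outer covering tile $h_t(Q) \subset g^+_t \circ h_{kk^+_t}(\UU)$ provided by Definition \ref{Def-appli-hom-2}, combined with the decomposition of each of these cells into $\mu(h_{k^\pm_t}(\UU))$ disjoint $G$-translates of $h_k(\UU)$ given by meshability \eqref{Eq-sub-6} and Remark \ref{Remark-Meshable}. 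In the strongly meshable, weakly asymptotically periodic case, the tilings $g^\pm_t \circ h_{kk^\pm_t}(\UU)$ are replaced throughout by the explicit disjoint unions $\bigsqcup_{g \in G^\pm_{t,k}} g \circ h_k(\UU)$ from Definition \ref{New-def-Revised-version}, and the volume control \eqref{Hyp-Sub-1} is replaced by \eqref{Hyp-Sub-3-revised}; the argument is otherwise structurally identical.

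For the upper bound, fix $k \in \NN^*$ and $t \ge t_{Q,k}$. Writing $h_t(Q) = (g^-_t \circ h_{kk^-_t}(\UU)) \sqcup R_t$ and combining subadditivity with $G$-invariance gives $\mathcal{S}(h_t(Q)) \le \mathcal{S}(h_{kk^-_t}(\UU)) + \mathcal{S}(R_t)$. Meshability together with Remark \ref{Remark-Meshable} (which asserts ${\rm card}(G^k_{k^-_t}) = \mu(h_{k^-_t}(\UU))$) and a further round of subadditivity plus $G$-invariance then yield $\mathcal{S}(h_{kk^-_t}(\UU)) \le \mu(h_{k^-_t}(\UU)) \, \mathcal{S}(h_k(\UU))$. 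For the remainder, \eqref{Sub4}, the inclusion $R_t \subset (g^+_t \circ h_{kk^+_t}(\UU)) \setminus (g^-_t \circ h_{kk^-_t}(\UU))$, and \eqref{Eq-sub-1-bis} produce $\mathcal{S}(R_t) \le c\,\mu(h_k(\UU))\,(\mu(h_{k^+_t}(\UU)) - \mu(h_{k^-_t}(\UU)))$. Dividing by $\mu(h_t(Q)) \ge \mu(h_k(\UU))\,\mu(h_{k^-_t}(\UU))$ and sending $t \to \infty$ via \eqref{Hyp-Sub-1} produces $\limsup_{t\to\infty} \mathcal{S}(h_t(Q))/\mu(h_t(Q)) \le \mathcal{S}(h_k(\UU))/\mu(h_k(\UU))$; taking the infimum over $k$ gives $\limsup \le \ell$.

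The main obstacle is the lower bound: since $\mathcal{S}$ is only subadditive and not monotone, the inscribed tile cannot be exploited here (the corresponding estimate would point in the wrong direction). The remedy is to apply subadditivity to the \emph{outer} covering tile. Writing $g^+_t \circ h_{kk^+_t}(\UU) = h_t(Q) \sqcup [(g^+_t \circ h_{kk^+_t}(\UU)) \setminus h_t(Q)]$, subadditivity together with $G$-invariance and \eqref{Sub4} produces
$$
\mathcal{S}(h_{kk^+_t}(\UU)) \le \mathcal{S}(h_t(Q)) + c\,\mu(h_k(\UU))\,(\mu(h_{k^+_t}(\UU)) - \mu(h_{k^-_t}(\UU))).
$$
Rearranging and dividing by $\mu(h_t(Q))$,
$$
\frac{\mathcal{S}(h_t(Q))}{\mu(h_t(Q))} \ge \frac{\mathcal{S}(h_{kk^+_t}(\UU))}{\mu(h_{kk^+_t}(\UU))} \cdot \frac{\mu(h_{kk^+_t}(\UU))}{\mu(h_t(Q))} - \frac{c\,\mu(h_k(\UU))\,(\mu(h_{k^+_t}(\UU)) - \mu(h_{k^-_t}(\UU)))}{\mu(h_t(Q))}.
$$
The first factor is $\ge \ell$ by definition of $\ell$, since $kk^+_t \in \NN^*$; the ratio $\mu(h_{kk^+_t}(\UU))/\mu(h_t(Q))$ is squeezed in $[1,\,\mu(h_{k^+_t}(\UU))/\mu(h_{k^-_t}(\UU))]$, which tends to $1$; and the error term tends to $0$, all by \eqref{Hyp-Sub-1}. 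Passing to the $\liminf$ yields $\liminf_{t\to\infty} \mathcal{S}(h_t(Q))/\mu(h_t(Q)) \ge \ell$, which combined with the upper bound completes the proof.
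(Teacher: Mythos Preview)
Your treatment of Case 1 (meshable and asymptotically periodic) is correct and matches the paper's argument essentially step by step; the only cosmetic difference is that the paper takes $k=1$ for the lower bound whereas you keep a general $k$, but since $kk^+_t\in\NN^*$ either way, this is immaterial.

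The gap is in your one-line dismissal of Case 2 (strongly meshable and weakly asymptotically periodic). You claim the argument is ``structurally identical'' after replacing $g^\pm_t\circ h_{kk^\pm_t}(\UU)$ by the unions $Q^\pm_{t,k}:=\bigcup_{g\in G^\pm_{t,k}}g\circ h_k(\UU)$. For the upper bound this is true and is exactly what the paper does. For the lower bound it is \emph{not}. In Case 1 the crucial step was that the outer tile $h_{kk^+_t}(\UU)$ is itself of the form $h_j(\UU)$ with $j\in\NN^*$, so $\mathcal{S}(h_{kk^+_t}(\UU))/\mu(h_{kk^+_t}(\UU))\ge\ell$ holds by definition of $\ell$. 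In Case 2 the outer set $Q^+_{t,k}$ is merely a finite union of $G$-translates of $h_k(\UU)$ and is in general \emph{not} of the form $h_j(\UU)$; nothing in the hypotheses gives you $\mathcal{S}(Q^+_{t,k})/\mu(Q^+_{t,k})\ge\ell$ directly, and subadditivity points the wrong way here.

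The paper repairs this by introducing, on the class $\mathcal{K}(X)$ of finite unions $K=\bigcup_{g\in H}g(\UU)$, the auxiliary function $\mathcal{S}_1(K):=\mathcal{S}(K)-{\rm card}(H)\,\mathcal{S}(\UU)$, and showing via assertion (b) of strong meshability that $\mathcal{S}_1$ is \emph{decreasing} with respect to inclusion. One then uses assertion (d) to embed $Q^+_{t,1}$ into some $f_t\circ h_{i_t}(\UU)$, and assertion (c) to rewrite the latter as an element of $\mathcal{K}(X)$; the monotonicity of $\mathcal{S}_1$ together with $G$-invariance then yields $\mathcal{S}(Q^+_{t,1})/\mu(Q^+_{t,1})\ge\ell$. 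This is precisely where the extra structural hypotheses (b), (c), (d) of Definition \ref{New-def-Revised-version-0} are consumed; your proposal does not invoke them at all, so as written your Case 2 lower bound does not go through.
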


Let $L:X\times\MM\to[0,\infty]$ be a Borel measurable integrand assumed to be $G$-invariant, i.e., for $\mu$-a.e. $x\in X$ and every $\xi\in \MM$,
$
L(g(x),\xi)=L(x,\xi)\hbox{ for all }g\in G. 
$
For each $t>0$, Let $L_t:X\times\MM\to[0,\infty]$ be given by 
\begin{equation}\label{mms-periodicity}
L_t(x,\xi)=L(h_t(x),\xi).
\end{equation}
(Note that $\{L_t\}_{t>0}$ is then $(G,\{h_t\}_{t>0})$-periodic, i.e., $L_t((h^{-1}_t{\rm o}g{\rm o}h_t)(x),\xi)=L_t(x,\xi)$ for all $x\in X$, all $\xi\in\MM$, all $t>0$ and all $g\in G$.) 
 
 For convenience, we introduce the following definition.
 \begin{definition}\label{mu-PeriOdiciTY-Def}
 Such a $\{L_t\}_{t>0}$, defined by \eqref{mms-periodicity}, is called  a family of {\em $(G,\{h_t\}_{t>0})$-periodic integrands modelled on $L$}.
 \end{definition}
 \begin{remark}
 If $(X,d,\mu)\equiv(\RR^N,d_2,\mathcal{L}_N)$ with $G\equiv \ZZ^N$, $\UU=Y$ and $\{h_t\}_{t>0}\equiv\{tx\}_{t>0}$, then $G$-periodicity is $Y$-periodicity and  $(G,\{h_t\}_{t>0})$-periodicity corresponds to ${1\over t}Y$-periodicity.
 \end{remark}
 
 Let ${\rm Ba}(X)$ be the class of open balls $Q$ of $X$ such that $\mu(\partial Q)=0$, where  $\partial Q:=\overline{Q}\setminus Q$. (Then ${\rm Ba}(X)\subset\mathcal{B}_0(X)$.) Applying Corollary \ref{Coro-MainResult} we then have
 \begin{theorem}\label{Coro-Homogenization}
Assume that $(X, d,\mu)$ is a $(G,\{h_t\}_{t>0})$-metric measure space which is either meshable and asymptotically periodic  or strongly meshable and  weakly asymptotically periodic with respect to ${\rm Ba}(X)$. If \eqref{Hyp1} holds and if $\{L_t\}_{t>0}$ is a family of $(G,\{h_t\}_{t>0})$-periodic integrands modelled on $L$ then
$$
\Gamma(L^p_\mu)\hbox{-}\displaystyle\lim\limits_{t\to\infty} E_t(u;A)=\int_{A}L_{\rm hom}(\nabla_\mu u(x))d\mu(x)
$$
for all $u\in W^{1,p}_\mu(\Omega;\RR^m)$ and all $A\in\mathcal{O}(\Omega)$ with $L^{\rm hom}:\MM\to[0,\infty]$ given by
$$
 L_{\rm hom}(\xi):=\inf_{k\in\NN^*}\inf\left\{\mint_{h_k\left(\mathring{\UU}\right)}L(y,\xi+\nabla_\mu w(y))d\mu(y):w\in W^{1,p}_{\mu,0}\left(h_k\big(\mathring{\UU}\big);\RR^m\right)\right\}.
$$
\end{theorem}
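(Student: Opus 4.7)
The strategy is to deduce the result from Corollary~\ref{Coro-MainResult}: the task reduces to showing that
$$
\lim_{\rho\to 0}\lim_{t\to\infty}\mathcal{H}^\rho_\mu L_t(x,\xi)=L_{\rm hom}(\xi)
$$
for $\mu$-a.e.\ $x\in\Omega$ and every $\xi\in\MM$, with a limit that is independent of $x$ and $\rho$. Once this is established, equality \eqref{Equality-Coro} is automatic and Corollary~\ref{Coro-MainResult} yields the conclusion.

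The first step is a change of variable in the cell problem \eqref{t-mu-quasiconvexification}. Using $L_t(y,\xi)=L(h_t(y),\xi)$ together with the scaling rule $h_t^\sharp\mu=\mu(h_t(\UU))\mu$, the substitution $z=h_t(y)$ and the associated chain rule for the $\mu$-gradient send each $w\in W^{1,p}_{\mu,0}(Q_\rho(x);\RR^m)$ bijectively to $v:=w\circ h_t^{-1}\in W^{1,p}_{\mu,0}(h_t(Q_\rho(x));\RR^m)$; the Jacobian factor $\mu(h_t(\UU))$ cancels between numerator and denominator in the mean value, producing the identity
$$
\mathcal{H}^\rho_\mu L_t(x,\xi)=\frac{\mathcal{S}_\xi\bigl(h_t(Q_\rho(x))\bigr)}{\mu\bigl(h_t(Q_\rho(x))\bigr)},\qquad(\star)
$$
where, for $A\in\mathcal{B}_0(X)$,
$$
\mathcal{S}_\xi(A):=\inf\left\{\int_A L(z,\xi+\nabla_\mu v(z))\,d\mu(z):v\in W^{1,p}_{\mu,0}(A;\RR^m)\right\}.
$$

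The second step is to apply Theorem~\ref{ST-MMspace} to $\mathcal{S}_\xi$. The required hypotheses can be checked directly: the $G$-invariance of $L$ in its space variable and of $\mu$ transfers to $\mathcal{S}_\xi$ by transporting competitors via $v\mapsto v\circ g^{-1}$; subadditivity over disjoint unions follows from the standard gluing argument, extending near-optimal competitors on disjoint $A,B\in\mathcal{B}_0(X)$ by zero to form an admissible test for $\mathcal{S}_\xi(A\cup B)$; and the right-hand bound in \eqref{Hyp1}, combined with the admissible choice $v\equiv 0$, yields $\mathcal{S}_\xi(A)\leq\beta(1+|\xi|^p)\mu(A)$. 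Since $Q_\rho(x)\in{\rm Ba}(X)\subset\mathcal{B}_0(X)$ for $\mu$-a.e.\ $x\in\Omega$ and a.e.\ $\rho>0$, Theorem~\ref{ST-MMspace} applied to $Q=Q_\rho(x)$ gives
$$
\lim_{t\to\infty}\frac{\mathcal{S}_\xi\bigl(h_t(Q_\rho(x))\bigr)}{\mu\bigl(h_t(Q_\rho(x))\bigr)}=\inf_{k\in\NN^*}\frac{\mathcal{S}_\xi(h_k(\UU))}{\mu(h_k(\UU))}=L_{\rm hom}(\xi),
$$
the last equality using $\mu(\partial\UU)=0$ to replace $\UU$ by $\mathring{\UU}$ in both numerator and denominator. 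Combined with $(\star)$, this identifies the limit and, since the right-hand side is constant in $\rho$, the passage $\rho\to 0$ is trivial.

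The main obstacle is Step~1: one has to verify that $w\mapsto w\circ h_t^{-1}$ induces a bijection between $W^{1,p}_{\mu,0}(Q_\rho(x);\RR^m)$ and $W^{1,p}_{\mu,0}(h_t(Q_\rho(x));\RR^m)$ and that the $\mu$-gradient transforms so that the Jacobian generated by $h_t^\sharp\mu=\mu(h_t(\UU))\mu$ is exactly absorbed into the mean-value normalisation in $(\star)$. This rests on the Cheeger-Sobolev machinery recalled in \S 3.1 and on the compatibility of $\{h_t\}_{t>0}$ with $\nabla_\mu$ built into the $(G,\{h_t\}_{t>0})$-structure; it is the only place where the metric (as opposed to Euclidean) nature of the space truly enters. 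Once this identity is in hand, the remainder of the proof is a direct application of Theorem~\ref{ST-MMspace} and Corollary~\ref{Coro-MainResult}.
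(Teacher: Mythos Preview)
Your proposal follows essentially the same route as the paper: reduce to Corollary~\ref{Coro-MainResult} via the change-of-variable identity $(\star)$, then apply the subadditive Theorem~\ref{ST-MMspace} to $\mathcal{S}_\xi$. Your honest flag that the change of variable under $h_t$ (the bijection $w\mapsto w\circ h_t^{-1}$ on the Cheeger--Sobolev spaces and the behaviour of $\nabla_\mu$) is the delicate step matches exactly where the paper itself is terse.

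One small technical point separates your write-up from the paper's. Theorem~\ref{ST-MMspace} requires $\mathcal{S}_\xi$ to be defined and subadditive on all of $\mathcal{B}_0(X)$, but your definition $\mathcal{S}_\xi(A)=\inf\{\int_A L(\cdot,\xi+\nabla_\mu v)\,d\mu:v\in W^{1,p}_{\mu,0}(A;\RR^m)\}$ only makes sense when $A$ is open, since $W^{1,p}_{\mu,0}(A;\RR^m)$ is defined as a closure of ${\rm Lip}_0(A;\RR^m)$ for open $A$. In the proof of Theorem~\ref{ST-MMspace} the set function is evaluated on sets like $h_t(Q)\setminus g^-_t\!\circ h_{kk^-_t}(\UU)$ which need not be open. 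The paper handles this by defining $\mathcal{S}^\xi(A)$ via the interior $\mathring{A}$, and then checks subadditivity by noting $\mu\big(\mathring{\widehat{A\cup B}}\setminus(\mathring{A}\cup\mathring{B})\big)=0$ for $A,B\in\mathcal{B}_0(X)$. With that adjustment your argument is complete and coincides with the paper's.
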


Theorem \ref{Coro-Homogenization} can be applied when $X$ is a $N$-dimensional manifold diffeomorphic to $\RR^N$.  In such a case, we have $d(\cdot,\cdot)=d_2(\Psi^{-1}(\cdot),\Psi^{-1}(\cdot))$, $\mu=({\Psi^{-1}})^\sharp\mathcal{L}_N$,
$\UU=\Psi(Y)$, $G\equiv\Psi(\ZZ^N)$  and $\{h_t\}_{t>0}\subset{\rm Homeo}(X)$ is given by $h_t(x)=\Psi(t\Psi^{-1}(x))$, where $\Psi$ is the corresponding diffeomorphism from $\RR^N$ to $X$. Moreover, Theorem \ref{Coro-Homogenization} can be generalized as follows.

\begin{theorem}\label{Coro-Homogenization2}
Assume that there exists a finite family $\{X_i\}_{i\in I}$ of subsets of $X$ such that
$
X=\cupp_{i\in I} X_i
$
and $\mu(X_i\cap X_j)=0$ for all $i\not=j$ and for which every $(X_i, d_{|X_i})$ is  a complete, separable and locally compact length space and every $(X_i, d_{|X_i},\mu_{|X_i})$ is a  $(G_i,\{h_t^i\}_{t>0})$-metric measure space which is either meshable and asymptotically periodic  or strongly meshable and weakly asymptotically periodic with respect to ${\rm Ba}(X_i)$, where $G_i$ and $\{h_t^i\}_{t>0}$ are subgroups of ${\rm Homeo}(X_i)$. Let $\{L_t\}_{t>0}$  be given by 
$$
L_t(x,\cdot):=L_{t}^i(x,\cdot)\hbox{ if } x\in X_i,
$$
where every $\{L_t^i\}_{t>0}$ is a family of $(G_i,\{h_t^i\}_{t>0})$-periodic integrands modelled on $L^i$.  If 
$
\Omega=\cup_{i\in I}\Omega_i
$
with every $\Omega_i\subset X_i$ being an open set and if \eqref{Hyp1} holds then
$$
\Gamma(L^p_\mu)\hbox{-}\displaystyle\lim\limits_{t\to\infty} E_t(u;A)=\sum_{i\in I}\int_{\Omega_i\cap A}L^i_{\rm hom}(\nabla_\mu u(x))d\mu(x)
$$
for all $u\in W^{1,p}_\mu(\Omega;\RR^m)$ and all $A\in\mathcal{O}(\Omega)$, where every $L_{\rm hom}^i:\MM\to[0,\infty]$ is given by 
\begin{equation}\label{HomoGenizaTion-Formula-LDS}
L_{\rm hom}^i(\xi):=\inf_{k\in\NN^*}\inf\left\{\mint_{h_k^i\left(\mathring{\UU}_i\right)}L^{i}(y,\xi+\nabla_\mu w)d\mu:w\in W^{1,p}_{\mu,0}(h_k^i\big(\mathring{\UU}_i\big);\RR^m)\right\}
\end{equation}
with $\UU_i$ denoting the unit cell in $X_i$.
\end{theorem}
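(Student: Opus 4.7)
The plan is to apply Corollary \ref{Coro-MainResult} on $(X,d,\mu)$ and identify the pointwise integrand $\lim_{\rho\to 0}\lim_{t\to\infty}\mathcal{H}^\rho_\mu L_t(x,\xi)$ with $L^i_{\rm hom}(\xi)$ at $\mu$-a.e.\ point $x$ of $X_i$. Once this identification is in place, the decomposition $A=\bigcup_{i\in I}(A\cap\Omega_i)$ together with $\mu(X_i\cap X_j)=0$ for $i\neq j$ (which yields $A\cap X_i=A\cap\Omega_i$ modulo $\mu$-null sets) turns the single integral produced by Corollary \ref{Coro-MainResult} into the advertised sum $\sum_{i\in I}\int_{\Omega_i\cap A}L^i_{\rm hom}(\nabla_\mu u)d\mu$.

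The first substantive step is localization at density points. Since $\mu$ is doubling on $\Omega$, the Lebesgue differentiation theorem assigns to $\mu$-a.e.\ $x\in\Omega$ a unique index $i=i(x)\in I$ with
$$
\lim_{\rho\to 0}\frac{\mu(Q_\rho(x)\cap X_i)}{\mu(Q_\rho(x))}=1.
$$
At such a point, using that $Q_\rho(x)\cap X_i$ is precisely the $d_{|X_i}$-ball of radius $\rho$ centred at $x$, I would establish the comparison
$$
\lim_{\rho\to 0}\sup_{t>0}\big|\mathcal{H}^\rho_\mu L_t(x,\xi)-\mathcal{H}^\rho_{\mu_{|X_i}}L_t^i(x,\xi)\big|=0,
$$
where on the right we have the cell formula computed intrinsically on $(X_i,d_{|X_i},\mu_{|X_i})$ with $L_t^i$. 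The comparison rests on the $p$-growth \eqref{Hyp1} (to bound the contribution of the thin layer $Q_\rho(x)\setminus X_i$, whose relative measure is $o(1)$) and on transferring admissible competitors between $W^{1,p}_{\mu,0}(Q_\rho(x);\RR^m)$ and $W^{1,p}_{\mu_{|X_i},0}(Q_\rho(x)\cap X_i;\RR^m)$ by zero-extension and restriction.

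The second step applies Theorem \ref{Coro-Homogenization} to each intrinsic space $(X_i,d_{|X_i},\mu_{|X_i})$ with the $(G_i,\{h_t^i\}_{t>0})$-periodic family $\{L_t^i\}_{t>0}$ modelled on $L^i$. For each (small) $\rho>0$ and $\mu_{|X_i}$-a.e.\ $x\in X_i$, the proof of Theorem \ref{Coro-Homogenization} (through the subadditive Theorem \ref{ST-MMspace}) yields
$$
\lim_{t\to\infty}\mathcal{H}^\rho_{\mu_{|X_i}}L_t^i(x,\xi)=L^i_{\rm hom}(\xi),
$$
a value that does not depend on $\rho$. Combined with the comparison of the previous step this shows both that hypothesis \eqref{Equality-Coro} of Corollary \ref{Coro-MainResult} is satisfied for $\{L_t\}$ on $(X,d,\mu)$ and that the resulting double limit equals $L^i_{\rm hom}(\xi)$ at $\mu$-a.e.\ $x\in\Omega_i$. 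The conclusion of Corollary \ref{Coro-MainResult} then reads
$$
\Gamma(L^p_\mu)\hbox{-}\lim_{t\to\infty}E_t(u,A)=\int_A L^{i(x)}_{\rm hom}(\nabla_\mu u(x))d\mu(x)=\sum_{i\in I}\int_{\Omega_i\cap A}L^i_{\rm hom}(\nabla_\mu u(x))d\mu(x),
$$
as asserted.

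The main obstacle is the comparison lemma of the first step. Two delicate points must be settled: first, the ambient Cheeger $\mu$-gradient $\nabla_\mu u$ on $(X,d,\mu)$ must be identified with the intrinsic Cheeger $\mu_{|X_i}$-gradient of $u_{|X_i}$ on $(X_i,d_{|X_i},\mu_{|X_i})$, so that the infima in $\mathcal{H}^\rho_\mu L_t(x,\xi)$ and $\mathcal{H}^\rho_{\mu_{|X_i}}L_t^i(x,\xi)$ range over essentially the same class of test functions at a density point; second, zero-extension from $X_i$ to $X$ must preserve the Cheeger--Sobolev class with the correct vanishing trace on $\partial Q_\rho(x)$, which relies on the geometric compatibility of the decomposition ($\mu(X_i\cap X_j)=0$ and, implicitly, null-measure boundaries for each piece) combined with the $p$-growth \eqref{Hyp1} supplying the quantitative bounds.
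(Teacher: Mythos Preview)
Your overall strategy coincides with the paper's: split the integral over $A$ into the pieces $\Omega_i\cap A$, identify the ambient cell formula with the intrinsic one on $X_i$ at density points, and invoke the subadditive theorem on each piece. The paper's proof is in fact only a two-line sketch (``it is easy to see, by using Theorem \ref{MainTheorem}, \ldots'' and ``it is also easily seen that Theorem \ref{ST-MMspace} implies \ldots''), so your write-up is considerably more explicit about the comparison step and its two delicate points than the paper itself is.

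There is one small but genuine slip in your route. You propose to apply Corollary \ref{Coro-MainResult}, whose hypothesis \eqref{Equality-Coro} requires $\liminf_{t\to\infty}\mathcal{H}^\rho_\mu L_t(x,\xi)=\limsup_{t\to\infty}\mathcal{H}^\rho_\mu L_t(x,\xi)$ for \emph{every} $\rho>0$. Your comparison lemma only gives $\bigl|\mathcal{H}^\rho_\mu L_t(x,\xi)-\mathcal{H}^\rho_{\mu_{|X_i}}L_t^i(x,\xi)\bigr|\leq\epsilon(\rho)$ with $\epsilon(\rho)\to 0$, while the intrinsic limit equals $L^i_{\rm hom}(\xi)$ for each $\rho$; this pins down the ambient $\liminf_t$ and $\limsup_t$ only up to $\epsilon(\rho)$, not their equality at a fixed $\rho$. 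The fix is exactly what the paper does: apply Theorem \ref{MainTheorem} directly rather than Corollary \ref{Coro-MainResult}. Formulas \eqref{MT-Eq1}--\eqref{MT-Eq2} involve $\limsup_{\rho\to 0}\liminf_{t\to\infty}$ and $\lim_{\rho\to 0}\limsup_{t\to\infty}$, and your estimates immediately give that both double limits equal $L^i_{\rm hom}(\xi)$, which is all that is needed.
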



\section{Auxiliary results}

\subsection{The \boldmath$p$\unboldmath-Cheeger-Sobolev spaces} Let $p>1$ be a real number, let $(X,d,\mu)$ be a metric measure space, where $(X,d)$ is a length space which is complete, separable and locally compact, and $\mu$ is a positive Radon measure on $X$, and let $\Omega\subset X$ be a bounded open set. We begin with the concept of upper gradient introduced by Heinonen and Koskela (see \cite{heinonen-koskela98}). 
\begin{definition}
A Borel function $g:\Omega\to[0,\infty]$ is said to be an upper gradient for $f:\Omega\to\RR$  if 
$
|f(c(1))-f(c(0))|\leq\int_0^1 g(c(s))ds
$
for all continuous rectifiable curves $c:[0,1]\to \Omega$. 
\end{definition}
The concept of upper gradient has been generalized by Cheeger as follows (see \cite[Definition 2.8]{cheeger99}). 
\begin{definition}\label{Def-p-weak-upper-gradient}
A function $g\in L^p_\mu(\Omega)$ is said to be a $p$-weak upper gradient for $f\in L^p_\mu(\Omega)$ if there exist $\{f_n\}_n\subset L^p_\mu(\Omega)$ and $\{g_n\}_n\subset L^p_\mu(\Omega)$ such that for each $n\geq 1$, $g_n$ is an upper gradient for $f_n$, $f_n\to f$ in $L^p_\mu(\Omega)$ and $g_n\to g$ in $L^p_\mu(\Omega)$. 
\end{definition}
Denote the algebra of Lipschitz functions from $\Omega$ to $\RR$ by ${\rm Lip}(\Omega)$. (Note that, by Hopf-Rinow's theorem (see \cite[Proposition 3.7, p. 35]{Bridson-Haefliger99}), the closure of $\Omega$ is compact, and so every Lipschitz function from $\Omega$ to $\RR$ is bounded.) From Cheeger and Keith (see \cite[Theorem 4.38]{cheeger99} and \cite[Definition 2.1.1 and Theorem 2.3.1]{keith1-04}) we have

\begin{theorem}\label{cheeger-theorem}
If $\mu$ is doubling on $\Omega$, i.e., \eqref{D-meas} holds, and $\Omega$ supports a weak $(1,p)$-Poincar\'e inequality, i.e., $\eqref{1-p-PI}$ holds, then there exists a countable family $\{(\Omega_\alpha,\xi^\alpha)\}_\alpha$ of $\mu$-measurable disjoint subsets $\Omega_\alpha$ of $\Omega$ with $\mu(\Omega\setminus\cup_\alpha \Omega_\alpha)=0$ and of functions $\xi^\alpha=(\xi^\alpha_1,\cdots,\xi^\alpha_{N(\alpha)}):\Omega\to\RR^{N(\alpha)}$ with $\xi^\alpha_i\in{\rm Lip}(\Omega)$ satisfying the following properties{\rm:}
\begin{enumerate}[leftmargin=*]
\item[\rm(a)]  there exists an integer $N\geq 1$ such that $N(\alpha)\in\{1,\cdots, N\}$ for all $\alpha;$ 
\item[\rm(b)] for every $\alpha$ and every $f\in{\rm Lip}(\Omega)$ there is a unique $D_\mu^\alpha f\in L^\infty_\mu(\Omega_\alpha;\RR^{N(\alpha)})$ such that for $\mu$-a.e. $x\in \Omega_\alpha$,
$$
\lim_{\rho\to 0}{1\over\rho}\|f-f_x\|_{L^\infty_{\mu}(Q_\rho(x))}=0,
$$
where $f_x\in {\rm Lip}(\Omega)$ is given by $f_x(y):=f(x)+D_\mu^\alpha f(x)\cdot(\xi^\alpha(y)-\xi^\alpha(x));$ in particular 
$$
D_\mu^\alpha f_x(y)=D_\mu^\alpha f(x)\hbox{ for $\mu$-a.e. $y\in \Omega_\alpha$};
$$
\item[\rm(c)] the operator $D_\mu:{\rm Lip}(\Omega)\to L^\infty_\mu(\Omega;\RR^N)$ given by
$$
D_\mu f:=\sum_\alpha \mathds{1}_{X_\alpha}D_\mu^\alpha f,
$$
where $\mathds{1}_{\Omega_\alpha}$ denotes the characteristic function of $\Omega_\alpha$, is linear and, for each $f,g\in{\rm Lip}(\Omega)$, one has 
$$
D_\mu(fg)=fD_\mu g+gD_\mu f;
$$
\item[\rm(d)] for every $f\in{\rm Lip}(\Omega)$, $D_\mu f=0$ $\mu$-a.e. on every $\mu$-measurable set where $f$ is constant.
\end{enumerate}
\end{theorem}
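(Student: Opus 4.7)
The plan is to follow the classical approach of Cheeger \cite{cheeger99}, with the conceptual simplification due to Keith \cite{keith1-04}. The backbone of the argument is the \emph{Lip-lip inequality}: for every $f\in{\rm Lip}(\Omega)$, introduce the pointwise upper and lower Lipschitz constants
$$
{\rm Lip}\,f(x):=\limsup_{\rho\to 0}\sup_{y\in Q_\rho(x)}\frac{|f(y)-f(x)|}{\rho},\quad {\rm lip}\,f(x):=\liminf_{\rho\to 0}\sup_{y\in Q_\rho(x)}\frac{|f(y)-f(x)|}{\rho}.
$$
Under \eqref{D-meas} and \eqref{1-p-PI}, the first step is to establish the existence of a constant $K\geq 1$ such that ${\rm Lip}\,f(x)\leq K\,{\rm lip}\,f(x)$ for $\mu$-a.e. $x\in\Omega$. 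The reason is that ${\rm Lip}\,f$ is an upper gradient for $f$, so the Poincar\'e inequality bounds the oscillation of $f$ on $Q_\rho(x)$ by $\rho$ times an $L^p$-average of ${\rm Lip}\,f$; a dyadic telescoping combined with a Lebesgue-differentiation / maximal-function argument then replaces those averages by the pointwise value of ${\rm lip}\,f$ at $\mu$-almost every point.

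The second step is to construct the coordinate charts via maximality. Call $\varphi_1,\dots,\varphi_k\in{\rm Lip}(\Omega)$ \emph{independent at $x$} if ${\rm lip}\bigl(\sum_i\lambda_i\varphi_i\bigr)(x)>0$ for every nonzero $(\lambda_1,\dots,\lambda_k)\in\RR^k$. Using the Lip-lip inequality together with the finite Assouad-type dimension forced by \eqref{D-meas}, one shows that the maximal size of such an independent tuple at any point is bounded by some integer $N$ depending only on the doubling and Poincar\'e data. A standard exhaustion and measurable-selection procedure then partitions $\Omega$ (up to a $\mu$-null set) into countably many $\mu$-measurable pieces $\Omega_\alpha$, each carrying a \emph{maximal} independent tuple $\xi^\alpha=(\xi^\alpha_1,\dots,\xi^\alpha_{N(\alpha)})\in{\rm Lip}(\Omega)^{N(\alpha)}$ with $N(\alpha)\in\{1,\dots,N\}$; this gives~(a).

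Given the charts, property (b) is obtained pointwise. For $f\in{\rm Lip}(\Omega)$ and $\mu$-a.e.\ $x\in\Omega_\alpha$, maximality of $\xi^\alpha$ forces the existence of a unique $D_\mu^\alpha f(x)\in\RR^{N(\alpha)}$ with ${\rm lip}(f-f_x)(x)=0$, where $f_x$ is as in the statement: two distinct such coefficient vectors would differ by a nontrivial linear combination of the $\xi^\alpha_i$'s plus a constant with vanishing ${\rm lip}$ at $x$, contradicting independence. The Lip-lip inequality upgrades ${\rm lip}(f-f_x)(x)=0$ to ${\rm Lip}(f-f_x)(x)=0$, which is exactly the $o(\rho)$-differentiability $\|f-f_x\|_{L^\infty_\mu(Q_\rho(x))}=o(\rho)$ asserted in the theorem; the "in particular" clause is then immediate because $f_x$ is itself a combination of the $\xi^\alpha_i$ and a constant, whose differential at any $y$ must (by uniqueness) equal $D_\mu^\alpha f(x)$. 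Measurability of $D_\mu^\alpha f$ follows from the pointwise limit construction, and the $L^\infty_\mu$-bound from the uniform Lipschitz control on $f$. Setting $D_\mu f:=\sum_\alpha\mathds{1}_{\Omega_\alpha}D_\mu^\alpha f$ produces the operator; linearity in (c) is immediate from uniqueness, the Leibniz rule reduces — using that Lipschitz functions on the relatively compact $\overline{\Omega}$ (via Hopf-Rinow) are bounded — to the identity $(fg)_x(y)-f_x(y)g_x(y)=o(|\xi^\alpha(y)-\xi^\alpha(x)|)$, and (d) is trivial at density points where $f$ is constant. The main obstacle is the first step: essentially all of the hard analysis coming from the interplay of doubling and Poincar\'e is packaged into the Lip-lip inequality and the attendant dimension bound, after which the construction of the differential is essentially formal.
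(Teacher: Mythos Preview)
The paper does not supply a proof of this theorem at all: it is quoted as a known structural result, attributed to Cheeger \cite[Theorem~4.38]{cheeger99} and Keith \cite[Definition~2.1.1 and Theorem~2.3.1]{keith1-04}, and is used as a black box throughout Sections~3 and~4. Your sketch is therefore not competing with an in-paper argument but rather summarising the very references the paper invokes, and on that score it is faithful: the Lip--lip inequality, the notion of independent Lipschitz tuples, the dimension bound coming from the doubling constant, the exhaustion into charts $(\Omega_\alpha,\xi^\alpha)$, and the derivation of (b)--(d) from maximality and uniqueness are exactly the Cheeger--Keith machinery.

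One caveat worth flagging: your one-line justification of the Lip--lip inequality (``dyadic telescoping combined with a Lebesgue-differentiation / maximal-function argument'') undersells the actual work. The passage from $L^p$-averages of an upper gradient to the pointwise ${\rm lip}\,f$ is the genuinely delicate step in \cite{cheeger99,keith1-04}; it requires more than Lebesgue differentiation, since one must compare oscillations at \emph{all} small scales, not just at Lebesgue points of a fixed function. If you intend this as a proof rather than a plan, that step needs either a precise citation or a real argument. Everything downstream of the Lip--lip inequality in your outline is, as you say, essentially formal.
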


\begin{remark}
Theorem \ref{cheeger-theorem} is true without the assumption that $(X,d)$ is a length space.
\end{remark}

Let ${\rm Lip}(\Omega;\RR^m):=[{\rm Lip(\Omega)}]^m$ and let $\nabla_\mu:{\rm Lip}(\Omega;\RR^m)\to L^\infty_\mu(\Omega;\MM)$ given by 
$$
\nabla_\mu u:=\left(
\begin{array}{c}
D_\mu u_1\\
\vdots\\ 
D_\mu u_m
\end{array}
\right)
\hbox{ with }u=(u_1,\cdots,u_m).
$$

From Theorem \ref{cheeger-theorem}(c) we see that for every $u\in {\rm Lip}(\Omega;\RR^m)$ and every $f\in {\rm Lip}(\Omega)$, one has
\begin{equation}\label{Mu-der-Prod}
\nabla_\mu (fu)=f\nabla_\mu u+D_\mu f\otimes u.
\end{equation}
\begin{definition} 
The $p$-Cheeger-Sobolev space $W^{1,p}_\mu(\Omega;\RR^m)$  is defined as the completion of ${\rm Lip}(\Omega;\RR^m)$ with respect to the norm
\begin{equation}\label{W1pmu-norm}
\|u\|_{W^{1,p}_\mu(\Omega;\RR^m)}:=\|u\|_{L^p_\mu(\Omega;\RR^m)}+\|\nabla_\mu u\|_{L_\mu^p(\Omega;\MM)}.
\end{equation}
\end{definition}
Taking Proposition \ref{Fundamental-Proposition-for-CalcVar-in-MMS}(a) below into account, since $\|\nabla_\mu u\|_{L_\mu^p(\Omega;\MM)}\leq \|u\|_{W^{1,p}_\mu(\Omega;\RR^m)}$ for all $u\in{\rm Lip}(\Omega;\RR^m)$ the linear map $\nabla_\mu$ from ${\rm Lip}(\Omega;\RR^m)$ to $L_\mu^p(\Omega;\MM)$  has a unique extension to $W^{1,p}_\mu(\Omega;\RR^m)$ which will still be denoted by $\nabla_\mu$ and will be called the $\mu$-gradient.

\begin{remark}
When $\Omega$ is a bounded open subset of $X=\RR^N$ and $\mu$ is the Lebesgue measure on $\RR^N$, we retrieve the (classical) Sobolev spaces $W^{1,p}(\Omega;\RR^m)$.  For more details on the various possible extensions of the classical theory of the Sobolev spaces to the setting of metric measure spaces, we refer to \cite[\S 10-14]{heinonen07} (see also \cite{cheeger99, shanmugalingam00, gol-tro01,hajlasz02}).
\end{remark}

The following proposition (whose proof is given below, see also \cite[Proposition 2.28]{AHM15}) provides useful properties for dealing with calculus of variations in the metric measure setting.

\begin{proposition}\label{Fundamental-Proposition-for-CalcVar-in-MMS}
Under the hypotheses of Theorem {\rm\ref{cheeger-theorem}}, we have{\rm:}
\begin{enumerate}[leftmargin=*]
\item[\rm(a)] the $\mu$-gradient is closable in $W^{1,p}_\mu(\Omega;\RR^m)$, i.e., for every $u\in W^{1,p}_\mu(\Omega;\RR^m)$ and every $A\in\mathcal{O}(\Omega)$, if $u(x)=0$ for $\mu$-a.e. $x\in A$ then $\nabla_\mu u(x)=0$ for $\mu$-a.e. $x\in A;$
\item[\rm(b)] $\Omega$ supports a $p$-Sobolev inequality, i.e., there exist $C_S>0$ and $\chi\geq 1$ such that
\begin{equation}\label{Poincare-Inequality}
\left(\int_{Q_\rho(x)}|v|^{\chi p}d\mu\right)^{1\over\chi p}\leq \rho C_S\left(\int_{Q_\rho(x)}|\nabla_\mu v|^pd\mu\right)^{1\over p}
\end{equation}
for all $0<\rho\leq \rho_0$, with $\rho_0>0$, and all $v\in W^{1,p}_{\mu,0}(Q_\rho(x);\RR^m)$, where, for each $A\in\mathcal{O}(\Omega)$, $W^{1,p}_{\mu,0}(A;\RR^m)$ is the closure of ${\rm Lip}_0(A;\RR^m)$ with respect to $W^{1,p}_\mu$-norm defined in \eqref{W1pmu-norm} with 
$$
{\rm Lip}_0(A;\RR^m):=\big\{u\in{\rm Lip}(\Omega;\RR^m):u=0\hbox{ on }\Omega\setminus A\big\};
$$
\item[\rm(c)] $\Omega$ satisfies the Vitali covering theorem, i.e., for every $A\subset \Omega$ and every family $\mathcal{F}$ of closed balls in $\Omega$, if $\inf\{\rho>0:\overline{Q}_\rho(x)\in\mathcal{F}\}=0$ for all $x\in A$ then there exists a countable disjointed subfamily $\mathcal{G}$ of $\mathcal{F}$ such that $\mu(A\setminus \cup_{Q\in\mathcal{G}}Q)=0;$ in other words, $A\subset \big(\cup_{Q\in\mathcal{G}}Q\big)\cup N$ with $\mu(N)=0;$
\item[\rm(d)] for every $u\in W^{1,p}_\mu(\Omega;\RR^m)$ and $\mu$-a.e. $x\in \Omega$ there exists $u_x\in W^{1,p}_\mu(\Omega;\RR^m)$ such that{\rm:}
\begin{eqnarray}
&&\nabla_\mu u_x(y)=\nabla_\mu u(x)\hbox{ for $\mu$-a.e. $y\in \Omega$};\label{FinALAssuMpTIOnOne}\\
&&\lim_{\rho\to0}{1\over\rho^p}\mint_{Q_\rho(x)}|u(y)-u_x(y)|^pd\mu(y)=0;\label{FinALAssuMpTIOnTwo}
\end{eqnarray}
\item[\rm(e)] for every $x\in \Omega$, every $\rho>0$ and every $s\in]0,1[$ there exists a Uryshon function $\varphi\in{\rm Lip}(\Omega)$ for the pair $(\Omega\setminus Q_\rho(x),\overline{Q}_{s\rho}(x))\footnote{Given a metric space $(\Omega,d)$, by a Uryshon function from $\Omega$ to $\RR$ for the pair $(\Omega\setminus V,K)$, where $K\subset V\subset\Omega$ with $K$ compact and $V$ open, we mean a continuous function $\varphi:\Omega\to\RR$ such that $\varphi(x)\in[0,1]$ for all $x\in\Omega$, $\varphi(x)=0$ for all $x\in\Omega\setminus V$ and $\varphi(x)=1$ for all $x\in K$.}$ such that 
$$
\|D_\mu\varphi\|_{L^\infty_\mu(\Omega;\RR^N)}\leq{\alpha\over\rho(1-s)}
$$
for some $\alpha>0$.
\end{enumerate}
If moreover $(X,d)$ is a length space then 
\begin{enumerate}[leftmargin=*]
\item[\rm(f)] for $\mu$-a.e. $x\in \Omega$,
\begin{equation}\label{DoublINgAssUMpTiON}
\lim_{s\to 1^-}\liminf_{\rho\to0}{\mu(Q_{s\rho}(x))\over\mu(Q_\rho(x))}=\lim_{s\to 1^-}\limsup_{\rho\to0}{\mu(Q_{s\rho}(x))\over\mu(Q_\rho(x))}=1.
\end{equation}
\end{enumerate}
\end{proposition}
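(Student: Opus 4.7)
The plan is to prove the six assertions by largely independent arguments anchored in the Cheeger chart of Theorem \ref{cheeger-theorem}, the doubling condition \eqref{D-meas}, and the Poincaré inequality \eqref{1-p-PI}. I will start with the self-contained parts. For (e), I will take $\varphi(y):=\eta(d(y,x))$ where $\eta$ is the piecewise affine function equal to $1$ on $[0,s\rho]$, linear from $1$ to $0$ on $[s\rho,\rho]$, and vanishing beyond; since $d(\cdot,x)$ is $1$-Lipschitz, applying Theorem \ref{cheeger-theorem}(c)-(d) with the chain rule along $\eta$ yields the gradient bound $1/(\rho(1-s))$. For (c), the Vitali covering theorem in a doubling metric measure space is classical, following from the $5r$-covering lemma iterated using \eqref{D-meas}. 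For (f), I would invoke the length space hypothesis: at $\mu$-a.e.\ $x$ the annulus $Q_\rho(x)\setminus Q_{s\rho}(x)$ can be covered by a controlled number of balls of radius $(1-s)\rho$ centred on approximate-geodesic midpoints joining $x$ to boundary points, and the doubling inequality then forces $\mu(Q_{s\rho}(x))/\mu(Q_\rho(x))\to 1$ as $s\to 1^-$.

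For (b), I would apply \eqref{1-p-PI} to the zero-extension of $v\in W^{1,p}_{\mu,0}(Q_\rho(x);\RR^m)$ on a slightly enlarged ball on which its average is small, then upgrade from $L^p$ to $L^{\chi p}$ via the standard truncation-and-chaining argument of Hajłasz-Koskela; the exponent $\chi$ is the dimension-type constant attached to the doubling condition and the vanishing boundary values absorb the mean-value term in \eqref{1-p-PI}. For (a), when $u\in{\rm Lip}(\Omega;\RR^m)$ the claim follows directly from Theorem \ref{cheeger-theorem}(d). In the general case, I approximate $u$ vanishing on $A$ by Lipschitz $u_n\to u$ in $W^{1,p}_\mu$, multiply $u_n$ by a Uryshon cutoff $\varphi$ from (e) supported on a relatively compact subset $A'\Subset A$, and use the product rule \eqref{Mu-der-Prod}: the resulting sequence still converges to $u$, each element is Lipschitz and vanishes on $A'$, hence has zero $\mu$-gradient there by Theorem \ref{cheeger-theorem}(d); exhausting $A$ by such $A'$ yields the conclusion.

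The principal difficulty is (d). At a point $x$ in a chart $\Omega_\alpha$, the natural candidate is $u_x(y):=u(x)+\nabla_\mu u(x)(\xi^\alpha(y)-\xi^\alpha(x))$, for which $\nabla_\mu u_x\equiv\nabla_\mu u(x)$ by Theorem \ref{cheeger-theorem}(b)-(c), so that \eqref{FinALAssuMpTIOnOne} is immediate. For \eqref{FinALAssuMpTIOnTwo}, I would first establish the estimate for Lipschitz $u$ directly from the pointwise differential identity of Theorem \ref{cheeger-theorem}(b), which gives a $o(\rho)$-remainder in $L^\infty$. Extending it to $u\in W^{1,p}_\mu(\Omega;\RR^m)$ requires a maximal-function control: approximating $u$ by Lipschitz $u_n$ with $\|u-u_n\|_{W^{1,p}_\mu}\to 0$, applying the Poincaré inequality \eqref{1-p-PI} to $u-u_n-(u-u_n)_x$ on $Q_\rho(x)$, and combining with the Sobolev inequality from (b) and a Hardy-Littlewood maximal bound on $|\nabla_\mu u - \nabla_\mu u_n|^p$, one obtains \eqref{FinALAssuMpTIOnTwo} at the common Lebesgue points of $u$ and $\nabla_\mu u$, which form a set of full measure thanks to (c) and doubling. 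The delicate point is to estimate $(u-u_n)_x$ uniformly in $n$, which demands a careful interplay between the chart map $\xi^\alpha$, the doubling property, and the scaling in the Poincaré inequality.
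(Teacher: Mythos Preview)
Your overall architecture is sound and for (c), (f) and (a) your arguments are essentially equivalent to the paper's (which cites Federer, Colding--Minicozzi and Franchi--Haj{\l}asz--Koskela, respectively); in (a) you presumably mean to multiply by $1-\varphi$ rather than $\varphi$ so that the modified Lipschitz sequence vanishes on the inner set, but once this is fixed your cutoff-and-exhaust argument is a legitimate alternative to the cited closability theorem.

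There is, however, a genuine gap shared by your treatments of (b) and (e): you never invoke the comparability $|D_\mu f|\approx g_f$ between the Cheeger differential and the minimal $p$-weak upper gradient. In (e), the function $\varphi(y)=\eta(d(y,x))$ has pointwise Lipschitz constant ${\rm Lip}\,\varphi\leq 1/(\rho(1-s))$, but Theorem~\ref{cheeger-theorem}(c)--(d) gives only a product rule and locality, not a chain rule, and says nothing about $D_\mu d(\cdot,x)$; to pass from the Lipschitz bound to $\|D_\mu\varphi\|_{L^\infty_\mu}\leq \alpha/(\rho(1-s))$ the paper uses Cheeger's identification ${\rm Lip}\,\varphi=g_\varphi$ (valid under doubling plus Poincar\'e) together with $|D_\mu\varphi|\leq\alpha\, g_\varphi$. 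The same issue arises in (b): the Haj{\l}asz--Koskela self-improvement yields a Sobolev--Poincar\'e inequality with the upper gradient on the right-hand side, and one needs the two-sided comparability $\alpha^{-1}|g_v|\leq|\nabla_\mu v|\leq\alpha|g_v|$ to obtain \eqref{Poincare-Inequality} in terms of $\nabla_\mu$. Without this bridge, neither (b) nor (e) goes through.

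For (d), your candidate $u_x$ is correct and \eqref{FinALAssuMpTIOnOne} is indeed immediate, but the sketch for \eqref{FinALAssuMpTIOnTwo} amounts to re-proving Bj{\"o}rn's $L^p$-differentiability theorem, and the ``delicate point'' you flag (uniform control of $(u-u_n)_x$) is precisely the substance of that result. The paper sidesteps this by citing \cite[Theorem 4.5 and Corollary 4.6]{bjorn00} directly; your route is not wrong in principle, but as written it is a statement of the difficulty rather than a resolution of it.
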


\begin{remark}\label{ReMArK-VItALi-For-OpEN-SEtS}
As $\mu$ is a Radon measure, if $\Omega$ satisfies the Vitali covering theorem, i.e., Proposition \ref{Fundamental-Proposition-for-CalcVar-in-MMS}(c) holds, then for every $A\in\mathcal{O}(\Omega)$ and every $\eps>0$ there exists a countable family $\{Q_{\rho_i}(x_i)\}_{i\in I}$ of disjoint open balls of $A$ with $x_i\in A$, $\rho_i\in]0,\eps[$ and $\mu(\partial Q_{\rho_i}(x_i))=0$ such that $\mu\big(A\setminus\cup_{i\in I}Q_{\rho_i}(x_i)\big)=0$.
\end{remark}

\begin{proof}[\bf Proof of Proposition \ref{Fundamental-Proposition-for-CalcVar-in-MMS}]
Firstly, $\Omega$ satisfies the Vitali covering theorem, i.e., the property (c) holds, because $\mu$ is doubling on $\Omega$ (see \cite[Theorem 2.8.18]{Fed-69}). Secondly, the closability of the $\mu$-gradient in ${\rm Lip}(\Omega;\RR^m)$, given by Theorem \ref{cheeger-theorem}(d), can be extended from ${\rm Lip}(\Omega;\RR^m)$ to $W^{1,p}_\mu(\Omega;\RR^m)$ by using the closability theorem of Franchi, Haj{\l}asz and Koskela (see \cite[Theorem 10]{fran-haj-kos99}). Thus, the property (a) is satisfied. Thirdly,  according to Cheeger (see \cite[\S 4, p. 450]{cheeger99} and also \cite{Haj-Kos-95,hajlaszkoskela00}), since $\mu$ is doubling on $\Omega$ and $\Omega$ supports a weak $(1,p)$-Poincar\'e inequality, we can assert that there exist $c>0$ and $\chi>1$ such that for every $0<\rho\leq\rho_0$, with $\rho_0\geq 0$, every $v\in W^{1,p}_{\mu,0}(\Omega;\RR^m)$ and every $p$-weak upper gradient $g\in L^p_\mu(\Omega;\RR^m)$ for $v$,
\begin{equation}\label{Cheeger-Coro-Eq1}
\left(\int_{Q_\rho(x)}|v|^{\chi p}d\mu\right)^{1\over\chi p}\leq \rho c\left(\int_{Q_\rho(x)}|g|^pd\mu\right)^{1\over p}.
\end{equation}
On the other hand, from Cheeger (see \cite[Theorems 2.10 and 2.18]{cheeger99}), for each $w\in W^{1,p}_\mu(\Omega)$ there exists a unique $p$-weak upper gradient for $w$, denoted by $g_w\in L ^p_\mu(\Omega)$ and called the minimal $p$-weak upper gradient for $w$, such that for every  $p$-weak upper gradient $g\in L^p_\mu(\Omega)$ for $w$, $g_w(x)\leq g(x)$ for $\mu$-a.e.  $x\in \Omega$. Moreover (see \cite[\S 4]{cheeger99} and also \cite[\S B.2, p. 363]{bjorn-bjorn-11}, \cite{bjorn00} and \cite[Remark 2.15]{gong-haj-13}), there exists $\alpha\geq 1$ such that for every $w\in W^{1,p}_\mu(\Omega)$ and $\mu$-a.e. $x\in \Omega$, 
$$
{1\over\alpha} |g_w(x)|\leq|D_\mu w(x)|\leq\alpha|g_w(x)|.
$$ 
As for $v=(v_i)_{i=1,\cdots,m}\in W^{1,p}_\mu(\Omega;\RR^m)$ we have $\nabla_\mu v=(D_\mu v_i)_{i=1,\cdots,m}$, it follows that 
\begin{equation}\label{Cheeger-Coro-Eq2}
{1\over \alpha} |g_v(x)|\leq|\nabla_\mu v(x)|\leq\alpha|g_v(x)|
\end{equation}
for $\mu$-a.e. $x\in \Omega$, where $g_v:=(g_{v_i})_{i=1,\cdots,m}$ is naturally called the minimal $p$-weak upper gradient for $v$. Combining \eqref{Cheeger-Coro-Eq1} with \eqref{Cheeger-Coro-Eq2} we obtain the property (b). Fourthly,  from Bj{\"o}rn (see \cite[Theorem 4.5 and Corollary 4.6]{bjorn00} and also \cite[Theorem 2.12]{gong-haj-13}) we see that for every $\alpha$, every $u\in W^{1,p}_\mu(\Omega;\RR^m)$ and $\mu$-a.e. $x\in \Omega_\alpha$,
$$
\nabla_\mu u_x(y)=\nabla_\mu u(x)\hbox{ for }\mu\hbox{-a.a. }y\in \Omega_\alpha,
$$
where $u_x\in W^{1,p}_\mu(\Omega;\RR^m)$ is given by 
$$
u_x(y):=u(y)-u(x)-\nabla_\mu u(x)\cdot(\xi^\alpha(y)-\xi^\alpha(x))
$$
 and $u$ is $L^p_\mu$-differentiable at $x$, i.e.,
$$
\lim_{\rho\to 0}{1\over\rho}\|u(y)-u_x(y)\|_{L^p_\mu(Q_\rho(x);\RR^m)}=0.
$$
Hence the property (d) is verified. Fifthly, given $\rho>0$, $s\in]0,1[$ and $x\in \Omega$, there exists a Uryshon function $\varphi\in {\rm Lip}(\Omega)$ for the pair $(\Omega\setminus Q_\rho(x)), \overline{Q}_{s\rho}(x))$ such 
$$
\|{\rm Lip}\varphi\|_{L^\infty_\mu(\Omega)}\leq{1\over\rho(1-s)},
$$
where for every $y\in \Omega$,
$$
{\rm Lip}\varphi(y):=\limsup_{d(y,z)\to0}{|\varphi(y)-\varphi(z)|\over d(y,z)}.
$$
But, since $\mu$ is doubling on $\Omega$ and $\Omega$ supports a weak $(1,p)$-Poincar\'e inequality, from Cheeger (see \cite[Theorem 6.1]{cheeger99}) we have ${\rm Lip}\varphi(y)=g_\varphi(y)$ for $\mu$-a.e. $y\in \Omega$, where $g_{\varphi}$ is the minimal $p$-weak upper gradient for $\varphi$. Hence 
$$
\|D_\mu\varphi\|_{L^\infty_\mu(\Omega;\RR^N)}\leq{\alpha\over\rho(1-s)}
$$ 
because $|D_\mu\varphi(y)|\leq\alpha|g_{\varphi}(y)|$ for $\mu$-a.e. $y\in \Omega$. Consequently the property (e) holds. Finally, if moreover $(X,d)$ is a length space then so is $(\Omega,d)$. Thus, from Colding and Minicozzi II (see \cite{colding-minicozzi98} and \cite[Proposition 6.12]{cheeger99}) we can assert that there exists $\beta>0$ such that for every $x\in \Omega$, every $\rho>0$ and every $s\in]0,1[$, 
$$
\mu(Q_\rho(x)\setminus Q_{s\rho}(x))\leq 2^\beta(1-s)^\beta\mu(Q_\rho(x)),
$$
which implies the property (f).
\end{proof}

\subsection{The De Giorgi-Letta lemma}

Let $\Omega=(\Omega,d)$ be a metric space, let $\mathcal{O}(\Omega)$ be the class of open subsets of $\Omega$ and let $\mathcal{B}(\Omega)$ be the class of Borel subsets of $\Omega$, i.e., the smallest $\sigma$-algebra containing the open (or equivalently the closed) subsets of $\Omega$. The following result is due to De Giorgi and Letta (see \cite{degiorgi-letta77} and also \cite[Lemma 3.3.6 p. 105]{buttazzo89}).

\begin{lemma}\label{DeGiorgi-Letta-Lemma}
Let $\mathcal{S}:\mathcal{O}(\Omega)\to[0,\infty]$ be an increasing set function, i.e., $\mathcal{S}(A)\leq \mathcal{S}(B)$ for all $A,B\in\mathcal{O}(\Omega)$ such $A\subset B$, satisfying the following four conditions{\rm:}
\begin{enumerate}[leftmargin=*]
\item[{\rm(a)}] $\mathcal{S}(\emptyset)=0;$
\item[{\rm(b)}] $\mathcal{S}$ is superadditive, i.e., $\mathcal{S}(A\cup B)\geq \mathcal{S}(A)+\mathcal{S}(B)$ for all $A,B\in\mathcal{O}(\Omega)$ such that $A\cap B=\emptyset;$
\item[{\rm(c)}] $\mathcal{S}$ is subadditive, i.e., $\mathcal{S}(A\cup B)\leq \mathcal{S}(A)+\mathcal{S}(B)$ for all $A,B\in\mathcal{O}(\Omega);$
\item[{\rm(d)}] there exists a finite Radon measure $\nu$ on $\Omega$ such that $\mathcal{S}(A)\leq\nu(A)$ for all $A\in\mathcal{O}(\Omega)$.
\end{enumerate}
Then, $\mathcal{S}$ can be uniquely extended to a finite positive Radon measure on $\Omega$ which is absolutely continuous with respect to $\nu$.
\end{lemma}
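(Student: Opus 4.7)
The plan is to follow the classical Carath\'eodory extension strategy. First I would define an outer-regular extension to all subsets of $\Omega$ by
$$
\mathcal{S}^*(E):=\inf\big\{\mathcal{S}(A):A\in\mathcal{O}(\Omega),\ E\subset A\big\},
$$
which coincides with $\mathcal{S}$ on $\mathcal{O}(\Omega)$ since $\mathcal{S}$ is increasing, and satisfies $\mathcal{S}^*(B)\leq\nu(B)$ for every Borel $B$ by (d) and outer regularity of $\nu$, so in particular $\mathcal{S}^*$ is finite. The goal is to show that $\mathcal{S}^*$ is an outer measure and that every open set is Carath\'eodory measurable with respect to it; restricting to the Borel $\sigma$-algebra then yields the desired Radon measure. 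A crucial preliminary is continuity from below of $\mathcal{S}$ along open sets: $\mathcal{S}(B_N)\uparrow\mathcal{S}(B)$ whenever $B_N\uparrow B$ are open. I would derive this from (c) and (d) by selecting, via outer regularity of $\nu$ and the fact that $\nu(B\setminus B_N)\to 0$, open sets $C_N\supset B\setminus B_N$ with $\nu(C_N)\to 0$; then (c) applied to $B\subset B_N\cup C_N$ gives $\mathcal{S}(B)\leq\mathcal{S}(B_N)+\nu(C_N)$, and passing to the limit closes the inequality. Countable subadditivity of $\mathcal{S}^*$ then follows by the usual route: given $E=\bigcup_n E_n$ and opens $A_n\supset E_n$ with $\mathcal{S}(A_n)\leq\mathcal{S}^*(E_n)+\eta/2^n$, finite subadditivity on the partial unions $B_N:=\bigcup_{n\leq N}A_n$ combined with continuity from below on $B_N\uparrow\bigcup_n A_n$ yields $\mathcal{S}^*(E)\leq\sum_n\mathcal{S}^*(E_n)+\eta$.

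The main step is Carath\'eodory measurability of an arbitrary open set $V$: for every $E\subset\Omega$,
$$
\mathcal{S}^*(E)\geq\mathcal{S}^*(E\cap V)+\mathcal{S}^*(E\setminus V).
$$
Given $\eta>0$ and open $A\supset E$ with $\mathcal{S}(A)\leq\mathcal{S}^*(E)+\eta$, I would use the open inner approximation $V_\delta:=\{x\in V:{\rm dist}(x,\Omega\setminus V)>\delta\}$. A direct check shows $\overline{V_\delta}\subset V$, so $A\cap V_\delta$ and $A\setminus\overline{V_\delta}$ are disjoint open subsets of $A$, and superadditivity (b) together with monotonicity yields
$$
\mathcal{S}(A)\geq\mathcal{S}(A\cap V_\delta)+\mathcal{S}(A\setminus\overline{V_\delta})\geq\mathcal{S}^*(E\cap V_\delta)+\mathcal{S}^*(E\setminus V),
$$
using $E\setminus V\subset A\setminus\overline{V_\delta}$. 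As $\delta\to 0$, the bound $\mathcal{S}^*(E\cap V)\leq\mathcal{S}^*(E\cap V_\delta)+\nu(V\setminus V_\delta)$ (from subadditivity of $\mathcal{S}^*$ and $\mathcal{S}^*\leq\nu$ on Borel sets) together with $\nu(V\setminus V_\delta)\to 0$ forces $\mathcal{S}^*(E\cap V_\delta)\to\mathcal{S}^*(E\cap V)$; letting $\eta\to 0$ concludes.

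Once open sets are $\mathcal{S}^*$-measurable, $\mathcal{S}^*\vert_{\mathcal{B}(\Omega)}$ is a finite positive Borel measure, Radon because it is outer regular by construction and dominated by $\nu$. Absolute continuity with respect to $\nu$ is immediate: if $\nu(B)=0$, any open $A\supset B$ with $\nu(A)<\eta$ gives $\mathcal{S}^*(B)\leq\mathcal{S}(A)\leq\nu(A)<\eta$, and uniqueness follows because two Radon measures coinciding on open sets agree on all Borel sets by outer regularity. The main obstacle is the continuity from below of $\mathcal{S}$ on open sets, which is not accessible from (a)--(c) alone; it is precisely the domination hypothesis (d), coupled with the outer regularity of the finite Radon majorant $\nu$, that allows one to control $\mathcal{S}(B)-\mathcal{S}(B_N)$ by the vanishing $\nu$-mass of a thin open neighbourhood of $B\setminus B_N$, after which the remainder is a standard Carath\'eodory verification.
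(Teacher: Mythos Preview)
Your argument is correct and follows the classical Carath\'eodory route: build the outer extension $\mathcal{S}^*$, use (d) together with outer regularity of $\nu$ to obtain continuity from below on open sets, deduce countable subadditivity, and then exploit (b) on the disjoint open pair $A\cap V_\delta$, $A\setminus\overline{V_\delta}$ to get Carath\'eodory measurability of open sets. Each step checks out; in particular, your identification of (d) as the ingredient that unlocks continuity from below is exactly the point of the De~Giorgi--Letta hypothesis.

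Note, however, that the paper does not supply its own proof of this lemma: it is quoted as a known result, with references to De~Giorgi--Letta \cite{degiorgi-letta77} and Buttazzo \cite[Lemma~3.3.6]{buttazzo89}. Your proof is essentially the standard one found in those references, so there is no genuine methodological divergence to compare. One minor remark: your final clause ``Radon because it is outer regular by construction and dominated by $\nu$'' could be sharpened; what actually gives Radon-ness here is that $\Omega$ is a separable locally compact metric space (in the paper's setting $\overline{\Omega}$ is compact), on which every finite Borel measure is automatically Radon. Outer regularity alone does not imply inner regularity by compacta.
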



\section{Proof of the $\Gamma$-convergence theorem}

This section is devoted to the proof of Theorem \ref{MainTheorem} which is divided into five steps.

\medskip

\paragraph{\bf Step 1: integral representation of the \boldmath$\Gamma$\unboldmath-limit inf and the \boldmath$\Gamma$\unboldmath-limit sup} 

For each $u\in W^{1,p}_\mu(\Omega;\RR^m)$ we consider the set functions $\mathcal{S}^-_u,\mathcal{S}^+_u:\mathcal{O}(\Omega)\to[0,\infty]$ given by:
\begin{trivlist}
\item $\displaystyle \mathcal{S}^-_u(A):=\Gamma(L^p_\mu)\hbox{-}\liminf_{t\to\infty}E_t(u,A)$;
\item $\displaystyle \mathcal{S}^+_u(A):=\Gamma(L^p_\mu)\hbox{-}\limsup_{t\to\infty}E_t(u,A)$.
\end{trivlist}
\begin{lemma}\label{Lemma-1-MT}
If \eqref{Hyp1} holds then{\rm:}
\begin{trivlist}
\item $\displaystyle \mathcal{S}_u^-(A)=\int_A\lambda_u^-(x)d\mu(x);$
\item $\displaystyle \mathcal{S}_u^+(A)=\int_A\lambda_u^+(x)d\mu(x)$
\end{trivlist}
for all $u\in W^{1,p}_\mu(\Omega;\RR^m)$ and all $A\in\mathcal{O}(\Omega)$ with $\lambda^-_u,\lambda^+_u\in L^1_\mu(\Omega)$ given by{\rm:}
\begin{trivlist}
\item $\displaystyle\lambda^-_u(x)=\lim_{\rho\to 0}{\mathcal{S}^-_u(Q_\rho(x))\over\mu(Q_\rho(x))};$
\item $\displaystyle\lambda^+_u(x)=\lim_{\rho\to 0}{\mathcal{S}^+_u(Q_\rho(x))\over\mu(Q_\rho(x))}$.
\end{trivlist}
\end{lemma}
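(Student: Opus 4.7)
The plan is to apply the De Giorgi--Letta lemma (Lemma \ref{DeGiorgi-Letta-Lemma}) to each of the set functions $\mathcal{S}_u^-$ and $\mathcal{S}_u^+$, thereby extending each of them to a finite Radon measure on $\Omega$ that is absolutely continuous with respect to $\mu$, and then invoke the Lebesgue--Besicovitch differentiation theorem, which is available here because $\Omega$ satisfies the Vitali covering theorem by Proposition \ref{Fundamental-Proposition-for-CalcVar-in-MMS}(c) (see also Remark \ref{ReMArK-VItALi-For-OpEN-SEtS}), in order to read off the integral representation with the densities $\lambda_u^\pm$ given by Lebesgue differentiation.

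First I would dispose of the easy hypotheses of Lemma \ref{DeGiorgi-Letta-Lemma}. Both $\mathcal{S}_u^-$ and $\mathcal{S}_u^+$ are manifestly increasing on $\mathcal{O}(\Omega)$, vanish on $\emptyset$, and are superadditive on disjoint open sets, since any two approximating sequences concentrated on disjoint opens can be tested independently without interaction. For the domination by a finite Radon measure, testing with the constant sequence $u_t\equiv u$ and using the upper bound in the $p$-growth hypothesis \eqref{Hyp1} yields
$$
\mathcal{S}_u^\pm(A)\leq\limsup_{t\to\infty}E_t(u,A)\leq\beta\int_A\bigl(1+|\nabla_\mu u(x)|^p\bigr)d\mu(x)=:\nu(A),
$$
so the finite Radon measure $\nu$ dominates both $\mathcal{S}_u^\pm$.

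The heart of the argument, and what I expect to be the main obstacle, is the subadditivity $\mathcal{S}_u^\pm(A\cup B)\leq\mathcal{S}_u^\pm(A)+\mathcal{S}_u^\pm(B)$. I would establish it by the classical De Giorgi cut-off/gluing construction adapted to the Cheeger--Sobolev setting: given near-optimal approximating sequences $v_t\to u$ on $A$ and $w_t\to u$ on $B$ (after passing to a common subsequence in the $\liminf$ case), and an inner set $A'\Subset A$, one partitions the corona $A\setminus A'$ into $M$ concentric thin layers and blends $v_t$ with $w_t$ across a single layer by means of a Urysohn cutoff $\varphi$ supplied by Proposition \ref{Fundamental-Proposition-for-CalcVar-in-MMS}(e), whose $\mu$-gradient is bounded by $\alpha/(\rho(1-s))$. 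Setting $u_t:=\varphi v_t+(1-\varphi)w_t$ and applying the product rule \eqref{Mu-der-Prod} gives $\nabla_\mu u_t=\varphi\nabla_\mu v_t+(1-\varphi)\nabla_\mu w_t+D_\mu\varphi\otimes(v_t-w_t)$; the $p$-growth estimate then controls $E_t(u_t,A\cup B)$ by $E_t(v_t,A)+E_t(w_t,B)+\nu(\text{layer})+C\int_{\text{layer}}|v_t-w_t|^p/\rho^p\,d\mu$. The cross term vanishes as $t\to\infty$ because $v_t$ and $w_t$ both converge in $L^p_\mu$ to the common limit $u$; averaging over the $M$ layers and letting $M\to\infty$ (the Hall/De Giorgi trick) makes the contribution of $\nu(\text{layer})$ negligible, completing subadditivity.

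Once all four De Giorgi--Letta hypotheses hold, $\mathcal{S}_u^\pm$ extends to a finite Radon measure on $\Omega$ absolutely continuous with respect to $\nu$ and hence with respect to $\mu$. The Radon--Nikodym theorem then furnishes densities $\lambda_u^\pm\in L^1_\mu(\Omega)$, and the Lebesgue--Besicovitch differentiation theorem identifies them $\mu$-a.e.\ with $\lim_{\rho\to 0}\mathcal{S}_u^\pm(Q_\rho(x))/\mu(Q_\rho(x))$, yielding precisely the integral representation claimed in the lemma.
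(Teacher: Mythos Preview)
Your proposal is correct and follows essentially the same approach as the paper: verify the De Giorgi--Letta hypotheses (with domination by $\nu=\beta(1+|\nabla_\mu u|^p)\,d\mu$ via \eqref{Hyp1}), prove subadditivity by the layered cut-off/averaging trick using the Urysohn functions of Proposition~\ref{Fundamental-Proposition-for-CalcVar-in-MMS}(e) and the product rule \eqref{Mu-der-Prod}, and conclude by Radon--Nikodym and Lebesgue differentiation. The only organizational difference is that the paper first isolates the cut-off estimate as a separate lemma of the form $\mathcal{S}_u^\pm(Z\cup T)\leq \mathcal{S}_u^\pm(U)+\mathcal{S}_u^\pm(V)$ whenever $\overline{Z}\subset U$ and $T\subset V$, and then applies it twice together with the bound by $\nu$ to deduce full subadditivity on arbitrary $A,B$; your sketch collapses these two steps, which is fine but makes it slightly harder to see why the region where $w_t$ is used but lies outside $B$ causes no trouble.
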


\begin{proof}[\bf Proof of Lemma \ref{Lemma-1-MT}]
Fix $u\in W^{1,p}_\mu(\Omega;\RR^m)$.  Using the right inequality in \eqref{Hyp1} we see that
\begin{eqnarray}
&&\mathcal{S}^-_u(A)\leq\int_A\beta(1+|\nabla_\mu u(x)|^p)d\mu(x)\hbox{ for all }A\in\mathcal{O}(\Omega)\label{ImPorTanT-EqUAtION}\\
&& \hbox{\big(resp. }\mathcal{S}^+_u(A)\leq\int_A\beta(1+|\nabla_\mu u(x)|^p)d\mu(x)\hbox{ for all }A\in\mathcal{O}(\Omega)\big).\label{ImPorTanT-EqUAtION-bis}
\end{eqnarray}
Thus, the condition (d) of Lemma \ref{DeGiorgi-Letta-Lemma} is satisfied with $\nu=\beta(1+|\nabla_\mu u|^p)d\mu$ (which is absolutely continuous with respect to $\mu$). On the other hand, it is easily seen that the conditions (a) and (b) of Lemma \ref{DeGiorgi-Letta-Lemma} are satisfied. Hence, the proof is completed by proving  the condition (c) of Lemma \ref{DeGiorgi-Letta-Lemma}, i.e., 
\begin{eqnarray}
&&\mathcal{S}^-_u(A\cup B)\leq \mathcal{S}^-_u(A)+\mathcal{S}^-_u(B)\hbox{ for all }A,B\in\mathcal{O}(\Omega)\label{Subadditivity-First-Goal}\\
&& \hbox{\big(resp. }\mathcal{S}^+_u(A\cup B)\leq \mathcal{S}^+_u(A)+\mathcal{S}^+_u(B)\hbox{ for all }A,B\in\mathcal{O}(\Omega)\big).\label{Subadditivity-First-Goal-bis}
\end{eqnarray}

Indeed, by Lemma \ref{DeGiorgi-Letta-Lemma}, the set function $\mathcal{S}^-_u$ (resp. $\mathcal{S}^+_u$) can be (uniquely) extended to a (finite) positive Radon measure which is absolutely continuous with respect to $\mu$, and the theorem follows by using Radon-Nikodym's theorem and then Lebesgue's differentiation theorem. 

\begin{remark}\label{Remark-Lemma-1-MT}
Lemma \ref{Lemma-1-MT} shows that both $\Gamma(L^p_\mu)\hbox{-}\liminf_{t\to\infty}E_t(u,\cdot)$ and $\Gamma(L^p_\mu)\hbox{-}\limsup_{t\to\infty}E_t(u,\cdot)$ can be uniquely extended to a finite positive Radon measure on $\Omega$ which is absolutely continuous with respect to $\mu$.
\end{remark}

To show \eqref{Subadditivity-First-Goal} (resp. \eqref{Subadditivity-First-Goal-bis}) we need the following lemma.
\begin{lemma}\label{LeMMa-MaiN-TheOReM1}
If $U,V,Z,T\in\mathcal{O}(\Omega)$ are such that $\overline{Z}\subset U$ and $T\subset V$,  then
\begin{eqnarray}
&&\mathcal{S}^-_u(Z\cup T)\leq\mathcal{S}^-_u(U)+\mathcal{S}^-_u(V)\label{SubAddiTive-Goal}\\
&& \hbox{\big(resp. }\mathcal{S}^+_u(Z\cup T)\leq\mathcal{S}^+_u(U)+\mathcal{S}^+_u(V)\big).\label{SubAddiTive-Goal-bis}
\end{eqnarray}
\end{lemma}
\begin{proof}[\bf Proof of Lemma \ref{LeMMa-MaiN-TheOReM1}]
As the proof of \eqref{SubAddiTive-Goal} and \eqref{SubAddiTive-Goal-bis} are exactly the same, we will only prove \eqref{SubAddiTive-Goal}. Let $\{u_t\}_{t>0}$ and $\{v_t\}_{t>0}$ be two sequences in $W^{1,p}_\mu(\Omega;\RR^m)$ such that:
\begin{eqnarray}
&& u_t\to u\hbox{ in }L^p_\mu(\Omega;\RR^m);\label{PrOoF-MT2-EquA1}\\
&& v_t\to u\hbox{ in }L^p_\mu(\Omega;\RR^m);\label{PrOoF-MT2-EquA2}\\
&& \lim_{t\to\infty}\int_UL_t(x,\nabla_\mu u_t(x))d\mu(x)=\mathcal{S}^-_u(U)<\infty;\label{PrOoF-MT2-EquA3}\\
&& \lim_{t\to\infty}\int_VL_t(x,\nabla_\mu v_t(x))d\mu(x)=\mathcal{S}^-_u(V)<\infty.\label{PrOoF-MT2-EquA4}
\end{eqnarray}
Fix $\delta\in]0,{\rm dist}(Z,\partial U)[$ with $\partial U:=\overline{U}\setminus U$, fix any $t>0$ and any $q\geq 1$ and consider $W^-_i,W^+_i\subset \Omega$ given by:
\begin{trivlist} 
\item[]$W^-_i:=\left\{x\in \Omega:{\rm dist}(x,Z)\leq {\delta\over 3}+{(i-1)\delta\over 3q}\right\}$;
\item[]$W^+_i:=\left\{x\in \Omega:{\delta\over 3}+{i\delta\over 3q}\leq{\rm dist}(x,Z)\right\},$
\end{trivlist}
where $i\in\{1,\cdots,q\}$. For every $i\in\{1,\cdots,q\}$ there exists a Uryshon function $\varphi_i\in{\rm Lip}(\Omega)$ for the pair $(W^+_i,W^-_i)$. Define $w_t^i\in W^{1,p}_\mu(\Omega;\RR^m)$ by 
$$
w^i_t:=\varphi_iu_t+(1-\varphi_i)v_t.
$$ 
Setting $W_i:=\Omega\setminus (W^-_i\cup W^{+}_i)$ and using Theorem \ref{cheeger-theorem}(d) and \eqref{Mu-der-Prod} we have
$$
\nabla_\mu w_t^i=\left\{
\begin{array}{ll}
\nabla_\mu u_t&\hbox{in }W^-_i\\
D_\mu\varphi_i\otimes(u_t-v_t)+\varphi_i\nabla_\mu u_t+(1-\varphi_i)\nabla_\mu v_t&\hbox{in }W_i\\
\nabla_\mu v_t&\hbox{in }W^+_i.
\end{array}
\right.
$$
Noticing that $Z\cup T=((Z\cup T)\cap W^-_i)\cup(W\cap W_i)\cup(T\cap W^+_i)$ with $(Z\cup T)\cap W_i^-\subset U$, $T\cap W^+_i\subset V$ and $W:=T\cap\{x\in U:{\delta\over 3}<{\rm dist}(x,Z)<{2\delta\over 3}\}$ we deduce that 
\begin{eqnarray}
\int_{Z\cup T}L_t(x,\nabla_\mu w^i_t)d\mu&\leq&\int_UL_t(x,\nabla_\mu u_t)d\mu+\int_VL_t(x,\nabla_\mu v_t)d\mu\label{LayERs-Eq1}\\
&&+\int_{W\cap W_i}L_t(x,\nabla_\mu w^i_t)d\mu\nonumber
\end{eqnarray}
for all $i\in\{1,\cdots,q\}$. Moreover, from the right inequality in \eqref{Hyp1} we see that for each $i\in\{1,\cdots,q\}$,
\begin{eqnarray}
\int_{W\cap W_i}L_t(x,\nabla_\mu w^i_t)d\mu&\leq&c\|D_\mu\varphi_i\|^p_{L^\infty_\mu(\Omega;\RR^N)}\|u_t-v_t\|^p_{L^p_\mu(\Omega;\RR^m)}\label{LayERs-Eq2}\\
&&+c\int_{W\cap W_i}(1+|\nabla_\mu u_t|^p+|\nabla_\mu v_t|^p)d\mu\nonumber
\end{eqnarray}
with $c:=2^{2p}\beta$. Substituting \eqref{LayERs-Eq2} into \eqref{LayERs-Eq1} and averaging these inequalities, it follows that for every $t>0$ and every $q\geq 1$, there exists $i_{t,q}\in\{1,\cdots,q\}$ such that
\begin{eqnarray}
\int_{Z\cup T}L_t(x,\nabla_\mu w_t^{i_{t,q}})d\mu&\leq&\int_UL_t(x,\nabla_\mu u_t)d\mu+\int_VL_t(x,\nabla_\mu v_t)d\mu\nonumber\\
&&+{c\over q}\sum_{i=1}^q\|D_\mu\varphi_i\|^p_{L^\infty_\mu(\Omega;\RR^N)}\|u_t-v_t\|^p_{L^p_\mu(\Omega;\RR^m)}\nonumber\\
&&+{c\over q}\left(\mu(\Omega)+\int_U|\nabla_\mu u_t|^pd\mu+\int_V|\nabla_\mu v_t|^pd\mu\right).\nonumber
\end{eqnarray}
On the other hand, by \eqref{PrOoF-MT2-EquA1} and \eqref{PrOoF-MT2-EquA2} we have:
\begin{trivlist}
\item[] $\displaystyle\lim_{t\to\infty}\|u_t-v_t\|^p_{L^p_\mu(\Omega;\RR^m)}=0$;
\item[] $\displaystyle\lim_{t\to\infty}\|w_t^{i_{t,q}}-u\|^p_{L^p_\mu(\Omega;\RR^m)}=0$ for all $q\geq 1$.
\end{trivlist}
Moreover, using \eqref{PrOoF-MT2-EquA3} and \eqref{PrOoF-MT2-EquA4} together with the left inequality in \eqref{Hyp1} we see that:
\begin{trivlist}
\item[] $\displaystyle\limsup_{t\to\infty}\int_U|\nabla_\mu u_t(x)|^pd\mu(x)<\infty$;
\item[] $\displaystyle\limsup_{t\to\infty}\int_V|\nabla_\mu v_t(x)|^pd\mu(x)<\infty$.
\end{trivlist}
Letting $t\to\infty$ (and taking \eqref{PrOoF-MT2-EquA3} and \eqref{PrOoF-MT2-EquA4} into account) we deduce that for every $q\geq 1$,
\begin{equation}\label{Limit-AvErAgE-LaYerS}
\mathcal{S}^-_u(Z\cup T)\leq\liminf_{t\to\infty}\int_{Z\cup T}L_t(x,\nabla_\mu w_t^{i_{t,q}}(x))d\mu(x)\leq\mathcal{S}^-_u(U)+\mathcal{S}^-_u(V)+{\hat c\over q}
\end{equation}
with $\hat c:=c(\mu(\Omega)+\limsup_{t\to\infty}\int_U|\nabla_\mu u_t(x)|^pd\mu(x)+\limsup_{t\to\infty}\int_V|\nabla_\mu v_t(x)|^pd\mu(x))$, and \eqref{SubAddiTive-Goal} follows from \eqref{Limit-AvErAgE-LaYerS} by letting $q\to\infty$. 
\end{proof}

\medskip

We now prove \eqref{Subadditivity-First-Goal} and \eqref{Subadditivity-First-Goal-bis}. Fix $A,B\in\mathcal{O}(\Omega)$. Fix any $\eps>0$ and consider $C,D\in\mathcal{O}(\Omega)$ such that $\overline{C}\subset A$, $\overline{D}\subset B$ and 
$$
\int_E\beta(1+|\nabla_\mu u(x)|^p)d\mu(x)<\eps
$$ 
with $E:=A\cup B\setminus\overline{C\cup D}$. Then $\mathcal{S}^-_u(E)\leq\eps$ by \eqref{ImPorTanT-EqUAtION} and $\mathcal{S}^+_u(E)\leq\eps$ by \eqref{ImPorTanT-EqUAtION-bis}. Let $\hat C,\hat D\in\mathcal{O}(\Omega)$ be such that $\overline{C}\subset\hat C$, $\overline{\hat C}\subset A$, $\overline{D}\subset\hat D$ and $\overline{\hat D}\subset B$. Applying Lemma \ref{LeMMa-MaiN-TheOReM1} with $U=\hat C\cup\hat D$, $V=T=E$ and $Z=C\cup D$ (resp. $U=A$, $V=B$, $Z=\hat C$ and $T=\hat D$) we obtain:
\begin{trivlist}
\item $\mathcal{S}^-_u(A\cup B)\leq\mathcal{S}^-_u(\hat C\cup\hat D)+\eps\hbox{ \big(resp. }\mathcal{S}^-_u(\hat C\cup\hat D)\leq\mathcal{S}^-_u(A)+\mathcal{S}^-_u(B)\big)$;
\item $\mathcal{S}^+_u(A\cup B)\leq\mathcal{S}^+_u(\hat C\cup\hat D)+\eps\hbox{ \big(resp. }\mathcal{S}^+_u(\hat C\cup\hat D)\leq\mathcal{S}^+_u(A)+\mathcal{S}^+_u(B)\big)$,
\end{trivlist}
and \eqref{Subadditivity-First-Goal} and \eqref{Subadditivity-First-Goal-bis} follows by letting $\eps\to0$.
\end{proof}

\medskip

\paragraph{\bf Step 2: other formulas for the \boldmath$\Gamma$\unboldmath-limit inf and the \boldmath$\Gamma$\unboldmath-limit sup} 
Consider the variational integrals $E^-_0, E^+_0:W^{1,p}_\mu(\Omega;\RR^m)\times\mathcal{O}(\Omega)\to[0,\infty]$ given by:
\begin{trivlist}
\item $\displaystyle E^-_0(u,A):=\inf\left\{\liminf_{t\to\infty}E_t(u_t,A):W^{1,p}_{\mu,0}(A;\RR^m)\ni u_t-u\stackrel{L^p_\mu}{\to}0\right\}$;
\item $\displaystyle E^+_0(u,A):=\inf\left\{\limsup_{t\to\infty}E_t(u_t,A):W^{1,p}_{\mu,0}(A;\RR^m)\ni u_t-u\stackrel{L^p_\mu}{\to}0\right\}$.
\end{trivlist}
\begin{lemma}\label{Lemma-2-MT}
If \eqref{Hyp1} holds then{\rm:}
\begin{eqnarray}
&&\Gamma(L^p_\mu)\hbox{-}\liminf_{t\to\infty}E_t(u,A)=E_0^-(u,A);\label{NeW-FoRmUlA}\\
&&\Gamma(L^p_\mu)\hbox{-}\limsup_{t\to\infty}E_t(u,A)=E_0^+(u,A)\label{NeW-FoRmUlA-Bis}
\end{eqnarray}
for all $u\in W^{1,p}_\mu(\Omega;\RR^m)$ and all $A\in\mathcal{O}(\Omega)$.
\end{lemma}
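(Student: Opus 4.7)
The inequalities $E_0^-(u,A)\geq\Gamma(L^p_\mu)\hbox{-}\liminf_{t\to\infty}E_t(u,A)$ and $E_0^+(u,A)\geq\Gamma(L^p_\mu)\hbox{-}\limsup_{t\to\infty}E_t(u,A)$ are immediate from the definitions, since any sequence admissible for $E_0^\pm$ is admissible for the $\Gamma$-limits, so the latter take their infima over strictly larger classes. The content lies in the reverse inequalities, which I prove by a De Giorgi slicing (boundary cut-off) argument modelled on the proof of Lemma \ref{LeMMa-MaiN-TheOReM1}; I treat \eqref{NeW-FoRmUlA} in detail, \eqref{NeW-FoRmUlA-Bis} being obtained by the identical argument with $\liminf$ replaced by $\limsup$.

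Fix $u\in W^{1,p}_\mu(\Omega;\RR^m)$, $A\in\mathcal{O}(\Omega)$, $\eps>0$, and let $\{u_t\}$ be any sequence with $u_t\stackrel{L^p_\mu}{\to}u$ and $\liminf_{t\to\infty}E_t(u_t,A)<\infty$ (otherwise nothing to prove). Pick $B\in\mathcal{O}(\Omega)$ with $\overline{B}\subset A$ and $\int_{A\setminus\overline{B}}\beta(1+|\nabla_\mu u|^p)d\mu<\eps$, and fix $\delta\in\,]0,{\rm dist}(\overline{B},\Omega\setminus A)[$. For each $q\in\NN^*$ and $i\in\{1,\ldots,q\}$, set $W_i^-:=\{x\in\Omega:{\rm dist}(x,B)\leq(i-1)\delta/q\}$, $W_i^+:=\{x\in\Omega:{\rm dist}(x,B)\geq i\delta/q\}$ and $W_i:=\Omega\setminus(W_i^-\cup W_i^+)$; by the choice of $\delta$, one has $\overline{B}\subset W_i^-\subset A$, $W_i\subset A$, $\Omega\setminus A\subset W_i^+$ and the $W_i$ are pairwise disjoint. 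Let $\varphi_i\in{\rm Lip}(\Omega)$ be a Uryshon function for the pair $(W_i^+,W_i^-)$ with Lipschitz constant $\leq q/\delta$ (e.g.\ $\varphi_i(x)={\rm dist}(x,W_i^+)/({\rm dist}(x,W_i^+)+{\rm dist}(x,W_i^-))$); reasoning as in the proof of Proposition \ref{Fundamental-Proposition-for-CalcVar-in-MMS}(e) one obtains $\|D_\mu\varphi_i\|_{L^\infty_\mu(\Omega;\RR^N)}\leq\alpha q/\delta$. Setting $v_t^i:=\varphi_i u_t+(1-\varphi_i)u$, one has $v_t^i-u=\varphi_i(u_t-u)$ with ${\rm supp}\,\varphi_i\subset\Omega\setminus W_i^+$ compactly contained in $A$; approximating $u_t-u$ in the $W^{1,p}_\mu$-norm by Lipschitz maps $w_n$ gives $\varphi_i w_n\in{\rm Lip}_0(A;\RR^m)$, and passing to the limit via the Leibniz rule \eqref{Mu-der-Prod} yields $v_t^i-u\in W^{1,p}_{\mu,0}(A;\RR^m)$.

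On $W_i^-$ one has $\nabla_\mu v_t^i=\nabla_\mu u_t$, on $W_i^+$ one has $\nabla_\mu v_t^i=\nabla_\mu u$, and on $W_i$ the formula $\nabla_\mu v_t^i=D_\mu\varphi_i\otimes(u_t-u)+\varphi_i\nabla_\mu u_t+(1-\varphi_i)\nabla_\mu u$ holds by \eqref{Mu-der-Prod}. Using the $p$-growth \eqref{Hyp1}, the inclusion $W_i^+\cap A\subset A\setminus\overline{B}$ (so that $\int_{W_i^+\cap A}L_t(\cdot,\nabla_\mu u)d\mu<\eps$), and the elementary inequality $|a+b+c|^p\leq 3^{p-1}(|a|^p+|b|^p+|c|^p)$, one gets
$$
E_t(v_t^i,A)\leq E_t(u_t,A)+\eps+C\Bigl(\mu(W_i)+\|D_\mu\varphi_i\|_{L^\infty_\mu}^p\|u_t-u\|_{L^p_\mu(W_i)}^p+\int_{W_i}|\nabla_\mu u_t|^pd\mu+\int_{W_i}|\nabla_\mu u|^pd\mu\Bigr).
$$
Summing over $i\in\{1,\ldots,q\}$, dividing by $q$, and using both the disjointness of the $W_i$ and the uniform bound $\sup_t\int_A|\nabla_\mu u_t|^p d\mu<\infty$ (furnished by the left inequality in \eqref{Hyp1}), one finds $i_{t,q}\in\{1,\ldots,q\}$ with
$$
E_t(v_t^{i_{t,q}},A)\leq E_t(u_t,A)+\eps+\frac{C'}{q}+C''\Bigl(\frac{q}{\delta}\Bigr)^p\|u_t-u\|_{L^p_\mu(A)}^p,
$$
where $C',C''$ are independent of $t$ and $q$. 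Fixing $q$ and letting $t\to\infty$, the last term vanishes (since $u_t\to u$ in $L^p_\mu$) while $v_t^{i_{t,q}}\to u$ in $L^p_\mu$, so $\{v_t^{i_{t,q}}\}_{t>0}$ is admissible for $E_0^-(u,A)$ and $E_0^-(u,A)\leq\liminf_{t\to\infty}E_t(u_t,A)+\eps+C'/q$. Letting $q\to\infty$, then taking the infimum over admissible $\{u_t\}$, and finally $\eps\to 0$, yields \eqref{NeW-FoRmUlA}.

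The main technical obstacle is the cut-off gradient bound $\|D_\mu\varphi_i\|_{L^\infty_\mu}\leq\alpha q/\delta$: Proposition \ref{Fundamental-Proposition-for-CalcVar-in-MMS}(e) supplies such a bound only for cut-offs between concentric balls, but the same scheme works here by combining the explicit distance-based choice of $\varphi_i$ (pointwise Lipschitz constant at most $q/\delta$) with the Cheeger identity $|D_\mu\varphi_i|\leq\alpha\,{\rm Lip}\,\varphi_i$ $\mu$-a.e., valid under the doubling and $(1,p)$-Poincar\'e assumptions. A secondary verification is that $\varphi_i(u_t-u)\in W^{1,p}_{\mu,0}(A;\RR^m)$, handled by the Lipschitz-approximation-plus-Leibniz-rule argument above. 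Once these two points are settled, the rest of the proof is a direct transcription of the Euclidean De Giorgi slicing argument that already powers Lemma \ref{LeMMa-MaiN-TheOReM1}.
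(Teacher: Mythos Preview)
Your proof is correct and follows essentially the same De Giorgi slicing argument as the paper's own proof (which treats the $\limsup$ case in detail instead of the $\liminf$, and slices between $A_\delta$ and $A$ rather than between a chosen $B\subset\subset A$ and $A$). One small point: in the $\liminf$ case the uniform bound $\sup_t\int_A|\nabla_\mu u_t|^p<\infty$ need not hold for an arbitrary test sequence---either pass first to a subsequence along which $E_t(u_t,A)$ converges to its $\liminf$, or use $\int_A|\nabla_\mu u_t|^p\leq\alpha^{-1}E_t(u_t,A)$ to replace your $C'/q$ by a multiplicative factor $(1+C/(\alpha q))$ in front of $E_t(u_t,A)$, which is harmless as $q\to\infty$.
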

\begin{proof}[\bf Proof of Lemma \ref{Lemma-2-MT}]
As the proof of \eqref{NeW-FoRmUlA} and \eqref{NeW-FoRmUlA-Bis} are exactly the same, we will only prove \eqref{NeW-FoRmUlA-Bis}. Fix $u\in W^{1,p}_\mu(\Omega;\RR^m)$ and $A\in\mathcal{O}(\Omega)$. Noticing that $W^{1,p}_{\mu,0}(A;\RR^m)\subset W^{1,p}_\mu(\Omega;\RR^m)$ we have $E^+_0(u;A)\geq \Gamma(L^p_\mu)\hbox{-}\limsup_{t\to\infty}E_t(u,A)$. Thus, it remains to prove that
\begin{equation}\label{GOal-Lemma-Bis}
E^+_0(u;A)\leq\Gamma(L^p_\mu)\hbox{-}\limsup_{t\to\infty}E_t(u,A).
\end{equation}
Let $\{u_t\}_{t>0}\subset W^{1,p}_\mu(\Omega;\RR^m)$ be such that
\begin{eqnarray}
&& u_t\to u\hbox{ in }L^p_\mu(\Omega;\RR^m)\label{PrOoF-MT2-EquA1-BiS};\\
&& \lim_{t\to\infty}\int_A L_t(x,\nabla_\mu u_t(x))d\mu(x)=\Gamma(L^p_\mu)\hbox{-}\limsup_{t\to\infty}E_t(u,A)<\infty.\label{PrOoF-MT2-EquA3-BiS}
\end{eqnarray}
Fix $\delta>0$ and set $A_\delta:=\{x\in A:{\rm dist}(x,\partial A)>\delta\}$ with $\partial A:=\overline{A}\setminus A$. Fix any $t>0$ and any $q\geq 1$ and consider $W^-_i,W^+_i\subset \Omega$ given by
\begin{trivlist} 
\item[]$W^-_i:=\left\{x\in \Omega:{\rm dist}(x,A_\delta)\leq {\delta\over 3}+{(i-1)\delta\over 3q}\right\}$;
\item[]$W^+_i:=\left\{x\in \Omega:{\delta\over 3}+{i\delta\over 3q}\leq{\rm dist}(x,A_\delta)\right\}$,
\end{trivlist}
where $i\in\{1,\cdots,q\}$. (Note that $W^-_i\subset A$.) For every $i\in\{1,\cdots,q\}$ there exists a Uryshon function $\varphi_i\in {\rm Lip}(\Omega)$ for the pair $(W^+_i,W^-_i)$. Define $w_t^i:X\to\RR^m$  by 
$$
w^i_t:=\varphi_iu_t+(1-\varphi_i)u.
$$ 
Then $w_t^i-u\in W^{1,p}_{\mu,0}(A;\RR^m)$. Setting $W_i:=\Omega\setminus (W^-_i\cup W^{+}_i)\subset A$ and using Theorem \ref{cheeger-theorem}(d) and \eqref{Mu-der-Prod} we have
$$
\nabla_\mu w_t^i=\left\{
\begin{array}{ll}
\nabla_\mu u_t&\hbox{in }W^-_i\\
D_\mu\varphi_i\otimes(u_t-u)+\varphi_i\nabla_\mu u_t+(1-\varphi_i)\nabla_\mu u&\hbox{in }W_i\\
\nabla_\mu u&\hbox{in }W^+_i.
\end{array}
\right.
$$
Noticing that $A=W^-_i\cup W_i\cup(A\cap W^+_i)$ we deduce that for every $i\in\{1,\cdots,q\}$,
\begin{eqnarray}
\int_A L_t(x,\nabla_\mu w^i_t)d\mu&\leq&\int_A L_t(x,\nabla_\mu u_t)d\mu+\int_{A\cap W^+_i}L_t(x,\nabla_\mu u)d\mu\label{LayERs-Eq1-BiS}\\
&&+\int_{W_i} L_t(x,\nabla_\mu w^i_t)d\mu.\nonumber
\end{eqnarray}
Moreover, from the right inequality in \eqref{Hyp1} we see that for each $i\in\{1,\cdots,q\}$,
\begin{eqnarray}
\int_{W_i} L_t(x,\nabla_\mu w^i_t)d\mu&\leq&c\|D_\mu\varphi_i\|^p_{L^\infty_\mu(\Omega;\RR^N)}\|u_t-u\|^p_{L^p_\mu(\Omega;\RR^m)}\label{LayERs-Eq2-BiS}\\
&&+c\int_{W_i}(1+|\nabla_\mu u_t|^p+|\nabla_\mu u|^p)d\mu\nonumber
\end{eqnarray}
with $c:=2^{2p}\beta$. Substituting \eqref{LayERs-Eq2-BiS} into \eqref{LayERs-Eq1-BiS} and averaging these inequalities, it follows that for every $t>0$ and every $q\geq 1$, there exists $i_{t,q}\in\{1,\cdots,q\}$ such that
\begin{eqnarray}
\int_{A}L_t(x,\nabla_\mu w_t^{i_{t,q}})d\mu&\leq&\int_AL_t(x,\nabla_\mu u_t)d\mu+{1\over q}\int_A L_t(x,\nabla_\mu u)d\mu\nonumber\\
&&+{c\over q}\sum_{i=1}^q\|D_\mu\varphi_i\|^p_{L^\infty_\mu(\Omega;\RR^N)}\|u_t-u\|^p_{L^p_\mu(\Omega;\RR^m)}\nonumber\\
&&+{c\over q}\left(\mu(A)+\int_A|\nabla_\mu u_t|^pd\mu+\int_A|\nabla_\mu u|^pd\mu\right).\nonumber
\end{eqnarray}
On the other hand, by \eqref{PrOoF-MT2-EquA1-BiS} we have
$$
\lim_{t\to\infty}\|w_t^{i_{t,q}}-u\|^p_{L^p_\mu(\Omega;\RR^m)}=0\hbox{ for all }q\geq 1.
$$
Moreover, using \eqref{PrOoF-MT2-EquA3-BiS}  together with the left inequality in \eqref{Hyp1} we see that
$$
\limsup_{t\to\infty}\int_A|\nabla_\mu u_t(x)|^pd\mu(x)<\infty.
$$
Letting $t\to\infty$ (and taking \eqref{PrOoF-MT2-EquA3-BiS} into account) we deduce that for every $q\geq 1$,
\begin{eqnarray}
E^+_0(u;A)&\leq&\limsup_{t\to\infty}\int_{A}L_t(x,\nabla_\mu w_t^{i_{t,q}})d\mu\label{Limit-AvErAgE-LaYerS-BiS}\\
&\leq&\Gamma(L^p_\mu)\hbox{-}\limsup_{t\to\infty}E_t(u,A)+{1\over q}\int_AL_t(x,\nabla_\mu u)d\mu+{\hat c\over q}\nonumber
\end{eqnarray}
with $\hat c:=\beta(\mu(A)+\limsup_{t\to\infty}\int_A|\nabla_\mu u_t(x)|^pd\mu(x)+\int_A|\nabla_\mu u(x)|^pd\mu(x))$, and \eqref{GOal-Lemma-Bis} follows from \eqref{Limit-AvErAgE-LaYerS-BiS} by letting $q\to\infty$. 
\end{proof}

\medskip

\paragraph{\bf Step 3: using the Vitali envelope} For each $u\in W^{1,p}_\mu(\Omega;\RR^m)$ we consider the set functions ${\rm \underline{m}}_u, {\rm \overline{m}}_u:\mathcal{O}(\Omega)\to[0,\infty]$ by:
\begin{trivlist}
\item$\displaystyle {\rm \underline{m}}_u(A):=\liminf_{t\to\infty}\inf\left\{E_t(v,A):v-u\in W^{1,p}_{\mu,0}(A;\RR^m)\right\}$;
\item $\displaystyle {\rm \overline{m}}_u(A):=\limsup_{t\to\infty}\inf\left\{E_t(v,A):v-u\in W^{1,p}_{\mu,0}(A;\RR^m)\right\}$.
\end{trivlist}

For each $\eps>0$ and each $A\in\mathcal{O}(\Omega)$, denote the class of countable families $\{Q_i:=Q_{\rho_i}(x_i)\}_{i\in I}$ of disjoint open balls of $A$ with $x_i\in A$, $\rho_i={\rm diam}(Q_i)\in]0,\eps[$ and $\mu(\partial Q_i)=0$ such that $\mu(A\setminus\cup_{i\in I}Q_i)=0$ by $\mathcal{V}_\eps(A)$, consider ${\rm \overline{m}}_u^\eps:\mathcal{O}(\Omega)\to[0,\infty]$ given by
$$
\displaystyle{\rm \overline{m}}_u^\eps(A):=\inf\left\{\sum_{i\in I}{\rm \overline{m}}_u(Q_i):\{Q_i\}_{i\in I}\in \mathcal{V}_\eps(A)\right\},
$$
and define ${\rm \overline{m}}^*_u:\mathcal{O}(\Omega)\to[0,\infty]$ by
$$
\displaystyle{\rm \overline{m}}^*_u(A):=\sup_{\eps>0}{\rm \overline{m}}^\eps_u(A)=\lim_{\eps\to0}{\rm \overline{m}}_u^\eps(A).
$$
The set function ${\rm \overline{m}}^*_u$ is called the Vitali envelope of ${\rm \overline{m}}_u$, see \cite[Section 3]{AHM15-PP} for more details. (Note that as $\Omega$ satisfies the Vitali covering theorem, see Proposition \ref{Fundamental-Proposition-for-CalcVar-in-MMS}(c) and Remark \ref{ReMArK-VItALi-For-OpEN-SEtS}, we have $\mathcal{V}_\eps(A)\not=\emptyset$ for all $A\in\mathcal{O}(\Omega)$ and all $\eps>0$.) 

\begin{lemma}\label{Lemma-3-MT}
If \eqref{Hyp1} holds then{\rm:}
\begin{eqnarray}
&&\Gamma(L^p_\mu)\hbox{-}\liminf_{t\to\infty}E_t(u,A)\geq {\rm \underline{m}}_u(A);\label{Eq-Lemma-3-MT}\\
&& \Gamma(L^p_\mu)\hbox{-}\limsup_{t\to\infty}E_t(u,A)={\rm \overline{m}}^*_u(A)\label{Eq-Lemma-3-MT-bis}
\end{eqnarray}
for all $u\in W^{1,p}_\mu(\Omega;\RR^m)$ and all $A\in\mathcal{O}(\Omega)$.
\end{lemma}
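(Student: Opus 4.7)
My plan would be to prove \eqref{Eq-Lemma-3-MT} and then each half of \eqref{Eq-Lemma-3-MT-bis} separately. For \eqref{Eq-Lemma-3-MT} I would invoke Lemma \ref{Lemma-2-MT} to replace the left-hand side by $E_0^-(u,A)$; then for any admissible $\{u_t\}$ in that infimum, $u_t-u\in W^{1,p}_{\mu,0}(A;\RR^m)$, so $u_t$ is a competitor for $\inf\{E_t(v,A):v-u\in W^{1,p}_{\mu,0}(A;\RR^m)\}$. Passing to $\liminf_t$ and then to the infimum over $\{u_t\}$ yields $E_0^-(u,A)\geq{\rm \underline{m}}_u(A)$. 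The same comparison, applied to the limsup, gives ${\rm \overline{m}}_u(B)\leq\mathcal{S}^+_u(B)$ for every $B\in\mathcal{O}(\Omega)$; since $\mathcal{S}^+_u$ extends to a Radon measure (Remark \ref{Remark-Lemma-1-MT}), for any $\{Q_i\}_{i\in I}\in\mathcal{V}_\eps(A)$ the disjointedness and $\mu(A\setminus\cup_iQ_i)=0$ force $\sum_i{\rm \overline{m}}_u(Q_i)\leq\sum_i\mathcal{S}^+_u(Q_i)=\mathcal{S}^+_u(A)$, and taking the infimum then $\eps\to0$ yields the easy direction ${\rm \overline{m}}^*_u(A)\leq\mathcal{S}^+_u(A)=\Gamma(L^p_\mu)\hbox{-}\limsup_tE_t(u,A)$.

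For the converse inequality $\mathcal{S}^+_u(A)\leq{\rm \overline{m}}^*_u(A)$, I would build a recovery sequence by gluing near-minimizers on an almost-optimal Vitali partition and then diagonalizing. Fix $\eta>0$ and $\eps>0$ small; select $\{Q_i\}_{i\in I}\in\mathcal{V}_\eps(A)$ with $\sum_i{\rm \overline{m}}_u(Q_i)\leq{\rm \overline{m}}^*_u(A)+\eta$ (the series is summable since ${\rm \overline{m}}_u\leq\mathcal{S}^+_u$ and $\mathcal{S}^+_u$ is absolutely continuous with respect to $\mu$), and truncate to a finite sub-family $\{Q_i\}_{i=1}^n$ so that $\mu(A\setminus\cup_{i=1}^nQ_i)$ and $\int_{A\setminus\cup_{i=1}^nQ_i}|\nabla_\mu u|^pd\mu$ are both less than $\eta$. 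For each $i\leq n$ and $t>0$ choose $v_t^i$ with $v_t^i-u\in W^{1,p}_{\mu,0}(Q_i;\RR^m)$ and $E_t(v_t^i,Q_i)\leq\inf\{E_t(v,Q_i):v-u\in W^{1,p}_{\mu,0}(Q_i;\RR^m)\}+\eta/n$, then glue by $v_t:=v_t^i$ on each $Q_i$ and $v_t:=u$ elsewhere; the disjointness of the supports of $v_t^i-u$ ensures $v_t-u\in W^{1,p}_{\mu,0}(A;\RR^m)$.

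For this $v_t$, the Sobolev inequality of Proposition \ref{Fundamental-Proposition-for-CalcVar-in-MMS}(b) applied on each small ball $Q_i$, combined with the lower bound in \eqref{Hyp1} and the estimate $\sum_iE_t(v_t^i,Q_i)\leq\mathcal{S}^+_u(A)+3\eta$ available along some sequence $t_k\to\infty$, gives $\|v_t-u\|_{L^p_\mu(\Omega;\RR^m)}^p=O(\eps^p)$ uniformly along this sequence. Because the truncated cover has only $n$ members, $\limsup_t$ distributes across the sum, while the remainder $A\setminus\cup_{i=1}^nQ_i$ contributes at most $\beta(\mu(A\setminus\cup_{i=1}^nQ_i)+\int_{A\setminus\cup_{i=1}^nQ_i}|\nabla_\mu u|^pd\mu)=O(\eta)$, so $\limsup_tE_t(v_t,A)\leq{\rm \overline{m}}^*_u(A)+O(\eta)$. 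A standard diagonalization over $\eta\to0$ (with $\eps\to0$ correspondingly) then produces a sequence $\{v_t\}$ with $v_t-u\in W^{1,p}_{\mu,0}(A;\RR^m)$, $v_t\to u$ in $L^p_\mu$ and $\limsup_tE_t(v_t,A)\leq{\rm \overline{m}}^*_u(A)$; by Lemma \ref{Lemma-2-MT}, $\mathcal{S}^+_u(A)=E_0^+(u,A)\leq{\rm \overline{m}}^*_u(A)$.

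The hard part will be engineering the glued $v_t$ so that its $L^p_\mu$-distance to $u$ decays at a rate in $\eps$ uniform in $t$, which is what makes the diagonalization produce an admissible recovery sequence for $E_0^+$. This rests on the smallness of the Vitali balls' radii, the Sobolev inequality of Proposition \ref{Fundamental-Proposition-for-CalcVar-in-MMS}(b), and the $p$-growth lower bound in \eqref{Hyp1}, which together convert the uniform upper estimate on $\sum_iE_t(v_t^i,Q_i)$ into a uniform $W^{1,p}_\mu$-control on the pieces $v_t^i-u$.
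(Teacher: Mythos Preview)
Your proposal is correct and follows essentially the same route as the paper: Lemma~\ref{Lemma-2-MT} for \eqref{Eq-Lemma-3-MT} and for the inequality $\overline{\rm m}_u\leq\mathcal{S}^+_u$, the measure property of $\mathcal{S}^+_u$ for the easy half of \eqref{Eq-Lemma-3-MT-bis}, and then gluing near-minimizers on a Vitali cover, controlling $\|v_t-u\|_{L^p_\mu}$ via the Sobolev inequality and the coercivity in \eqref{Hyp1}, and diagonalizing. The only difference is that you truncate the Vitali family to finitely many balls so that $\limsup_t$ commutes with the finite sum, whereas the paper keeps the full countable family and passes the $\limsup$ inside the infinite sum directly (this is legitimate by reverse Fatou, since $m^t_u(Q_i)\leq\beta\big(\mu(Q_i)+\int_{Q_i}|\nabla_\mu u|^p\,d\mu\big)$ uniformly in $t$); your truncation is a harmless variant and in fact makes that step more transparent.
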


\begin{proof}[\bf Proof of Lemma \ref{Lemma-3-MT}]
From Lemma \ref{Lemma-2-MT} it is easy to see that $\Gamma(L^p_\mu)\hbox{-}\liminf_{t\to\infty}E_t(u,A)\geq {\rm \underline{m}}_u(A)$ and $\Gamma(L^p_\mu)\hbox{-}\limsup_{t\to\infty}E_t(u,A)\geq {\rm \overline{m}}_u(A)$ and so $\Gamma(L^p_\mu)\hbox{-}\limsup_{t\to\infty}E_t(u,A)\geq {\rm \overline{m}}^*_u(A)$ because in the proof of Lemma \ref{Lemma-1-MT} it is established that $\Gamma(L^p_\mu)\hbox{-}\limsup_{t\to\infty}E_t(u,\cdot)$ can be uniquely extended to a finite positive Radon measure on $\Omega$, see Remark \ref{Remark-Lemma-1-MT}. Hence \eqref{Eq-Lemma-3-MT} holds and, to establish \eqref{Eq-Lemma-3-MT-bis}, it remains to prove that
\begin{equation}\label{EEEEqqqq}
\Gamma(L^p_\mu)\hbox{-}\limsup_{t\to\infty}E_t(u,A)\leq {\rm \overline{m}}^*_u(A)
\end{equation}
with ${\rm \overline{m}}^*_u(A)<\infty$. Fix any $\eps>0$. Given $A\in\mathcal{O}(\Omega)$, by definition of ${\rm \overline{m}}^\eps_u(A)$, there exists $\{Q_{i}\}_{i\in I}\in\mathcal{V}_\eps(A)$  such that 
\begin{equation}\label{EEEqqq1}
\sum_{i\in I}{\rm \overline{m}}_u(Q_{i})\leq {\rm \overline{m}}^\eps_u(A)+{\eps\over 2}.
\end{equation}
Fix any $t>0$ and define ${\rm m}^t_u:\mathcal{O}(\Omega)\to[0,\infty]$ by
$$
{\rm m}^t_u(A):=\inf\left\{E_t(v,A):v-u\in W^{1,p}_{\mu,0}(A;\RR^m)\right\}.
$$
(Thus $\overline{m}_u(\cdot)=\limsup_{t\to\infty} m^t_u(\cdot)$.) Given any $i\in I$, by definition of ${\rm{m}}^t_u(Q_i)$, there exists $v_{t}^i\in W^{1,p}_{\mu}(Q_{i};\RR^m)$ such that $v_{t}^i-u\in W^{1,p}_{\mu,0}(Q_{i};\RR^m)$ and
\begin{equation}\label{EEEqqq1-bis}
E_t(v_{t}^i,Q_i)\leq {\rm {m}}^t_u(Q_i)+{\eps \mu(Q_i)\over 2\mu(A)}.
\end{equation}
Define $u^\eps_t:\Omega\to\RR^m$ by 
$$
u^\eps_t:=\left\{
\begin{array}{ll}
u&\hbox{in }\Omega\setminus A\\
v_t^i&\hbox{in }Q_{i}.
\end{array}
\right.
$$
Then $u^\eps_t-u\in W^{1,p}_{\mu,0}(A;\RR^m)$. Moreover, because of Proposition \ref{Fundamental-Proposition-for-CalcVar-in-MMS}(a), $\nabla_\mu u^\eps_t(x)=\nabla_\mu v^i_t(x)$ for $\mu$-a.e. $x\in Q_i$. From \eqref{EEEqqq1-bis} we see that
$$
E_t(u_{t}^\eps,A)\leq \sum_{i\in I}{\rm {m}}^t_u(Q_i)+{\eps\over 2},
$$
hence
$
\limsup_{t\to\infty}E_t(u_{t}^\eps,A)\leq {\rm \overline{m}}^\eps_u(A)+{\eps}
$
by using \eqref{EEEqqq1}, and consequently
\begin{equation}\label{Diag-SteP3-1}
\limsup_{\eps\to 0}\limsup_{t\to\infty}E_t(u_{t}^\eps,A)\leq {\rm \overline{m}}^*_u(A).
\end{equation}
On the other hand, we have
\begin{eqnarray*}
\|u_t^\eps-u\|^{p}_{L^{\chi p}_\mu(\Omega;\RR^m)}=\left(\int_A|u_t^\eps-u|^{\chi p}d\mu\right)^{1\over\chi}&=&\left(\sum_{i\in I}\int_{Q_{i}}|v_t^i-u|^{\chi p}d\mu\right)^{1\over\chi}\\
&\leq&\sum_{i\in I}\left(\int_{Q_{i}}|v_t^i-u|^{\chi p}d\mu\right)^{1\over\chi}
\end{eqnarray*}
with $\chi\geq 1$ given by \eqref{Poincare-Inequality}. As $\Omega$ supports a $p$-Sobolev inequality, see Proposition \ref{Fundamental-Proposition-for-CalcVar-in-MMS}(b), and ${\rm diam}(Q_i)\in]0,\eps[$ for all $i\in I$, we have
$$
\|u_t^\eps-u\|^{p}_{L^{\chi p}_\mu(\Omega;\RR^m)}\leq\eps^{p} C_S^{p}\sum_{i\in I}\int_{Q_{i}}|\nabla_\mu v_t^i-\nabla_\mu u|^pd\mu
$$
with $C_S>0$ given by \eqref{Poincare-Inequality}, and so
\begin{equation}\label{EEEqqq2}
\|u_t^\eps-u\|^{p}_{L^{\chi p}_\mu(\Omega;\RR^m)}\leq 2^p\eps^{p} C_S^p\left(\sum_{i\in I}\int_{Q_{i}}|\nabla_\mu v_t^i|^pd\mu+\int_A|\nabla_\mu u|^pd\mu\right).
\end{equation}
Taking the left inequality in \eqref{Hyp1}, \eqref{EEEqqq1-bis} and \eqref{EEEqqq1} into account, from \eqref{EEEqqq2} we deduce that
$$
\limsup_{t\to\infty}\|u_t^\eps-u\|^p_{L^{\chi p}_\mu(\Omega;\RR^m)}\leq 2^p C_S^p\eps^{p}\left({1\over \alpha}({\rm \overline{m}}^\eps_u(A)+\eps)+\int_A|\nabla_\mu u|^pd\mu\right)
$$
which gives
\begin{equation}\label{Diag-SteP3-2}
\limsup_{\eps\to0}\limsup_{t\to\infty}\|u_t^\eps-u\|^p_{L^{\chi p}_\mu(\Omega;\RR^m)}=0
\end{equation}
because $\lim_{\eps\to0}\overline{{\rm m}}_u^\eps(A)=\overline{{\rm m}}^*_u(A)<\infty$. According to \eqref{Diag-SteP3-1} and \eqref{Diag-SteP3-2}, by diagonalization there exists a mapping $t\mapsto \eps_t$, with $\eps_t\to0$ as $t\to\infty$, such that:
\begin{eqnarray}
&&\lim_{t\to\infty}\|w_t-u\|^p_{L^{\chi p}_\mu(\Omega;\RR^m)}=0;\label{EnD-EqSteP3-1}\\
&&\limsup_{t\to\infty}E_t(w_t,A)\leq {\rm \overline{m}}^*_u(A)\label{EnD-EqSteP3-2}
\end{eqnarray}
with $w_t:=u^{\eps_t}_t$. Since $\chi p\geq p$, $w_t\to u$ in $L^{p}_\mu(\Omega;\RR^m)$ by \eqref{EnD-EqSteP3-1}, and \eqref{EEEEqqqq} follows from \eqref{EnD-EqSteP3-2} by noticing that $\Gamma(L^p_\mu)\hbox{-}\limsup_{t\to\infty}E_t(u;A)\leq\limsup_{t\to\infty}E_t(w_t,A)$.
\end{proof}

\medskip

\paragraph{\bf Step 4: differentiation with respect to \boldmath$\mu$\unboldmath} First of all, using Lemma \ref{Lemma-1-MT}, Remark \ref{Remark-Lemma-1-MT} and Lemma \ref{Lemma-3-MT} it easily seen that:
\begin{eqnarray}
&&\hskip-18mm  \Gamma(L^p_\mu)\hbox{-}\liminf_{t\to\infty}E_t(u,A)\geq\int_A\limsup_{\rho\to0}{{\rm \underline{m}}_u(Q_\rho(x))\over\mu(Q_\rho(x))}d\mu(x);\label{Step4-EquaTion1}\\
&&\hskip-18mm  \Gamma(L^p_\mu)\hbox{-}\liminf_{t\to\infty}E_t(u,A)=\int_A\lim_{\rho\to0}{{\rm \overline{m}}^*_u(Q_\rho(x))\over\mu(Q_\rho(x))}d\mu(x)\geq\int_A\limsup_{\rho\to0}{{\rm \overline{m}}_u(Q_\rho(x))\over\mu(Q_\rho(x))}d\mu(x)\label{Step4-EquaTion2}
\end{eqnarray}
for all $u\in W^{1,p}_\mu(\Omega;\RR^m)$ and all $A\in\mathcal{O}(\Omega)$. Moreover, we have
\begin{lemma}\label{Lemma-4-MT}
For $\mu$-a.e. $x\in\Omega$, 
\begin{equation}\label{Step4-EquaTion3}
\lim_{\rho\to0}{{\rm \overline{m}}^*_u(Q_\rho(x))\over\mu(Q_\rho(x))}\leq\liminf_{\rho\to0}{{\rm \overline{m}}_u(Q_\rho(x))\over\mu(Q_\rho(x))}.
\end{equation}
\end{lemma}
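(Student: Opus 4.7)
The plan is to reduce \eqref{Step4-EquaTion3} to the integral inequality
$$
\overline{m}_u^*(A) \leq \int_A \psi(x)\, d\mu(x) \quad \text{for all } A \in \mathcal{O}(\Omega),
$$
where $\psi(x) := \liminf_{\rho\to 0}\overline{m}_u(Q_\rho(x))/\mu(Q_\rho(x))$. From Lemmas \ref{Lemma-1-MT} and \ref{Lemma-3-MT} together with Remark \ref{Remark-Lemma-1-MT}, $\overline{m}_u^*$ extends to a finite Radon measure on $\Omega$ absolutely continuous with respect to $\mu$. Once the integral inequality above is established, the Lebesgue differentiation theorem (valid here since $\mu$ is doubling) applied to both $\overline{m}_u^*$ and the Radon measure $\psi\, d\mu$ yields $\lim_{\rho \to 0}\overline{m}_u^*(Q_\rho(x))/\mu(Q_\rho(x)) \leq \psi(x)$ for $\mu$-a.e. $x$, which is \eqref{Step4-EquaTion3}. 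A preliminary observation is that $\psi \in L^1_\mu(\Omega)$: taking $v=u$ as competitor in the definition of $\overline{m}_u(A)$ and using the right inequality in \eqref{Hyp1} gives $\overline{m}_u(A) \leq \int_A \beta(1+|\nabla_\mu u|^p)\, d\mu$, so $\psi(x) \leq \beta(1+|\nabla_\mu u(x)|^p)$ for $\mu$-a.e. $x$ by Lebesgue differentiation.

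To establish the integral inequality, fix $\eta, \varepsilon > 0$. For $\mu$-a.e. $x \in A$ the point $x$ is simultaneously a Lebesgue point of $\psi$ and realizes the $\liminf$ defining $\psi(x)$. For such $x$ there exist arbitrarily small $\rho \in\, ]0,\varepsilon[$ with $Q_\rho(x) \subset A$ (openness of $A$), $\mu(\partial Q_\rho(x)) = 0$ (all but countably many $\rho$ share this property), and both
$$
\overline{m}_u(Q_\rho(x)) \leq \bigl(\psi(x) + \tfrac{\eta}{2}\bigr)\mu(Q_\rho(x)), \qquad \int_{Q_\rho(x)}\psi\, d\mu \geq \bigl(\psi(x) - \tfrac{\eta}{2}\bigr)\mu(Q_\rho(x)),
$$
which combine into $\overline{m}_u(Q_\rho(x)) \leq \int_{Q_\rho(x)}\psi\, d\mu + \eta\,\mu(Q_\rho(x))$.

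I would then apply the Vitali covering theorem, Proposition \ref{Fundamental-Proposition-for-CalcVar-in-MMS}(c), to the family of closed balls enjoying all the above properties. This produces a countable disjoint subfamily $\{Q_j\}_{j\in J}$ covering $A$ up to a $\mu$-null set, and this subfamily belongs to $\mathcal{V}_\varepsilon(A)$. Summation yields
$$
\overline{m}_u^\varepsilon(A) \leq \sum_{j\in J}\overline{m}_u(Q_j) \leq \sum_{j\in J}\int_{Q_j}\psi\, d\mu + \eta\sum_{j\in J}\mu(Q_j) \leq \int_A \psi\, d\mu + \eta\,\mu(A),
$$
where the last step uses that the $Q_j$ are disjoint and contained in $A$. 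Sending $\varepsilon \to 0$ and then $\eta \to 0$ gives the integral inequality.

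The only delicate point is the Vitali extraction: one has to check that the set of radii $\rho$ satisfying \emph{simultaneously} smallness $\rho < \varepsilon$, containment $Q_\rho(x) \subset A$, boundary $\mu$-null, the $\liminf$ bound, and the Lebesgue-point bound, still accumulates at $0$ for $\mu$-a.e. $x \in A$. Each condition individually is satisfied for an abundance of small $\rho$, and the intersection remains cofinal at $0$, so Proposition \ref{Fundamental-Proposition-for-CalcVar-in-MMS}(c) applies. Everything else is a routine derivation-of-a-set-function argument in the doubling metric-measure setting.
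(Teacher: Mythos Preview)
Your argument is correct and genuinely different from the paper's. The paper proceeds by an exceptional-set argument: for each $s>0$ it defines the ``bad'' set $N_s$ of points $x$ for which $\overline{\rm m}_u^*(Q_\rho(x))>\overline{\rm m}_u(Q_\rho(x))+s\mu(Q_\rho(x))$ at arbitrarily small radii, Vitali-covers $N_s$ by such balls, and shows the cover has $\mu$-measure zero via the comparison $\overline{\rm m}_u^*(\Omega)\geq\overline{\rm m}_u^\eps(\Omega)+s\mu(\cup_i Q_i)$, which forces $\mu(\cup_iQ_i)=0$ after letting $\eps\to0$. You instead go straight for the integral inequality $\overline{\rm m}_u^*(A)\leq\int_A\psi\,d\mu$ by building a \emph{good} Vitali cover (balls where the $\liminf$ is nearly attained and $\psi$ is nearly averaged) and then Lebesgue-differentiate. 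Your route is shorter and conceptually cleaner, at the price of needing $\psi$ measurable and of having to match the $\liminf$-realizing radii with the Lebesgue-point and null-boundary conditions at the same $\rho$; the paper's route sidesteps both issues, though it too tacitly assumes that membership in $\mathcal{G}_s$ can be combined with $\mu(\partial Q_\rho(x))=0$ at arbitrarily small scales. Neither proof spells this last technicality out, so your treatment is at the same level of rigor as the paper's.
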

\begin{proof}[\bf Proof of Lemma \ref{Lemma-4-MT}]
Fix any $s>0$. Denote the class of open balls $Q_\rho(x)$, with $x\in \Omega$ and $\rho>0$, such that $\overline{\rm m}^*_u(Q_{\rho}(x))>\overline{\rm m}_u(Q_{\rho}(x))+s\mu(Q_{\rho}(x))$ by $\mathcal{G}_s$ and define $N_s\subset \Omega$ by
$$
N_s:=\Big\{x\in \Omega:\forall\delta>0\ \exists\rho\in]0,\delta[\ Q_\rho(x)\in\mathcal{G}_s\Big\}.
$$
Fix any $\eps>0$. Using the definition of $N_s$, we can assert that for each $x\in N_s$ there exists $\{\rho_{x,n}\}_n\subset]0,\eps[$ with $\rho_{x,n}\to0$ as $n\to\infty$ such that for every $n\geq 1$, $\mu(\partial Q_{\rho_{x,n}}(x))=0$ and $Q_{\rho_{x,n}}(x)\in\mathcal{G}_s$. Consider the family $\mathcal{F}_0$ of closed balls in $\Omega$ given by
$$
\mathcal{F}_0:=\left\{\overline{Q}_{\rho_{x,n}}(x):x\in N_s\hbox{ and }n\geq 1\right\}.
$$
Then $\inf\left\{r>0:\overline{Q}_r(x)\in\mathcal{F}_0\right\}=0$ for all $x\in N_s$. As $\Omega$ satisfies the Vitali covering theorem, there exists a disjointed countable subfamily $\{\overline{Q}_i\}_{i\in I_0}$ of closed balls of $\mathcal{F}_0$ (with $\mu(\partial Q_i)=0$ and ${\rm diam}(Q_i)\in]0,\eps[$) such that
$$
N_s\subset\Big(\cupp_{i\in I_0}\overline{Q}_i\Big)\cup\Big(N_s\setminus\cupp_{i\in I_0}\overline{Q}_i\Big)\hbox{ with }\mu\Big(N_s\setminus\cupp_{i\in I_0}\overline{Q}_i\Big)=0. 
$$
If $\mu\big(\cup_{i\in I_0}\overline{Q}_i\big)=0$ then \eqref{Step4-EquaTion3} will follow. Indeed, in this case we have $\mu(N_s)=0$, i.e., $\mu(\Omega\setminus N_s)=\mu(\Omega)$, and given $x\in \Omega\setminus N_s$ there exists $\delta>0$ such that $\overline{\rm m}^*_u(Q_{\rho}(x))\leq\overline{\rm m}_u(Q_{\rho}(x))+s\mu(Q_{\rho}(x))$ for all $\rho\in]0,\delta[$. Hence 
$$
\lim_{\rho\to0}{\overline{\rm m}^*_u(Q_\rho(x))\over\mu(Q_\rho(x))}\leq\liminf_{\rho\to0}{\overline{\rm m}_u(Q_\rho(x))\over\mu(Q_\rho(x))}+s\hbox{ for all }s>0,
$$
and \eqref{Step4-EquaTion3} follows by letting $s\to0$. 

To establish that $\mu\big(\cupp_{i\in I_0}\overline{Q}_i\big)=0$ it is sufficient to prove that for every finite subset $J$ of $I_0$,
\begin{equation}\label{GoAl-PPPRRRoooFFF}
\mu\Big(\cupp_{i\in J}\overline{Q}_i\Big)=0.
\end{equation} 
As $\Omega$ satisfies the Vitali covering theorem and $\Omega\setminus \cupp_{i\in J}\overline{Q}_i$ is open, there exists a countable family $\{B_i\}_{i\in I}$ of disjoint open balls of $\Omega\setminus \cupp_{i\in J}\overline{Q}_i$, with $\mu(\partial B_i)=0$ and ${\rm diam}(B_i)\in]0,\eps[$, such that
\begin{equation}\label{EqUaT-VitaL-1}
\mu\left(\Big(\Omega\setminus\cupp_{i\in J}\overline{Q}_i\Big)\setminus \cupp_{i\in I}B_i\right)=\mu\left(\Omega\setminus\Big(\cupp_{i\in I}B_i\Big)\cup\Big(\cupp_{i\in J}Q_i\Big)\right)=0.
\end{equation}
Recalling that $\overline{\rm m}^*_u$ is the restriction to $\mathcal{O}(\Omega)$ of a finite positive Radon measure  which is absolutely continuous with respect to $\mu$ (see Lemmas \ref{Lemma-1-MT}, Remark \ref{Remark-Lemma-1-MT} and \ref{Lemma-3-MT}), from \eqref{EqUaT-VitaL-1} we see that
$$
\overline{\rm m}^*_u(\Omega)=\sum_{i\in I}\overline{\rm m}^*_u(B_i)+\sum_{i\in J}\overline{\rm m}^*_u(Q_i).
$$
Moreover, $Q_i\in\mathcal{G}_s$ for all $i\in J$, i.e., $\overline{\rm m}^*_u(Q_i)>\overline{\rm m}_u(Q_i)+s\mu(Q_i)$ for all $i\in J$, and $\overline{\rm m}^*_u\geq \overline{\rm m}_u$, hence 
$$
\overline{\rm m}^*_u(\Omega)\geq \sum_{i\in I}\overline{\rm m}_u(B_i)+\sum_{i\in J}\overline{\rm m}_u(Q_i)+s\mu\left(\cupp_{i\in J}Q_i\right).
$$
As $\{B_i\}_{i\in I}\cup\{Q_i\}_{i\in J}\in\mathcal{V}_\eps(\Omega)$ we have $\sum_{i\in I}\overline{\rm m}_u(B_i)+\sum_{i\in J}\overline{\rm m}_u(Q_i)\geq\overline{\rm m}_u^\eps(\Omega)$, hence $\overline{\rm m}^*_u(\Omega)\geq \overline{\rm m}^\eps_u(\Omega)+s\mu(\cupp_{i\in J}Q_i)$, and \eqref{GoAl-PPPRRRoooFFF} follows by letting $\eps\to0$.
\end{proof}

\medskip

Combining \eqref{Step4-EquaTion3} with \eqref{Step4-EquaTion2} we obtain
\begin{equation}\label{Step4-EquaTion2-bis}
 \Gamma(L^p_\mu)\hbox{-}\liminf_{t\to\infty}E_t(u,A)=\int_A\lim_{\rho\to0}{{\rm \overline{m}}_u(Q_\rho(x))\over\mu(Q_\rho(x))}d\mu(x)
\end{equation}
for all $u\in W^{1,p}(\Omega;\RR^m)$ and all $A\in\mathcal{O}(\Omega)$.

\medskip

\paragraph{\bf Step 5: removing by affine functions} According to \eqref{Step4-EquaTion1} and \eqref{Step4-EquaTion2-bis}, the proof of Theorem \ref{MainTheorem} will be completed if we prove that for each $u\in W^{1,p}_\mu(\Omega;\RR^m)$ and $\mu$-a.e. $x\in\Omega$, we have:
\begin{eqnarray}
&&\limsup_{\rho\to0}{\underline{\rm m}_u(Q_\rho(x))\over\mu(Q_\rho(x))}\geq\limsup_{\rho\to0}{\underline{\rm m}_{u_x}(Q_\rho(x))\over\mu(Q_\rho(x))};\label{FiNaL-EqUa1}\\
&&\lim_{\rho\to0}{\overline{\rm m}_u(Q_\rho(x))\over\mu(Q_\rho(x))}=\lim_{\rho\to0}{\overline{\rm m}_{u_x}(Q_\rho(x))\over\mu(Q_\rho(x))},\label{FiNaL-EqUa2}
\end{eqnarray}
where $u_x\in W^{1,p}_\mu(\Omega;\RR^m)$ is given by Proposition \ref{Fundamental-Proposition-for-CalcVar-in-MMS}(d) (and satisfies \eqref{FinALAssuMpTIOnOne} and \eqref{FinALAssuMpTIOnTwo}).

\begin{remark}
In fact, we have:
\begin{trivlist}
\item $\displaystyle {\underline{\rm m}_{u_x}(Q_\rho(x))\over\mu(Q_\rho(x))}=\liminf_{t\to\infty}\mathcal{H}^\rho_\mu L_t(x,\nabla_\mu u(x))$;
\item $\displaystyle {\overline{\rm m}_{u_x}(Q_\rho(x))\over\mu(Q_\rho(x))}=\limsup_{t\to\infty}\mathcal{H}^\rho_\mu L_t(x,\nabla_\mu u(x))$,
\end{trivlist}
where $\mathcal{H}^\rho_\mu L_t:\MM\to[0,\infty]$ is given by \eqref{t-mu-quasiconvexification}.
\end{remark}

We only give the proof of \eqref{FiNaL-EqUa1} because the equality \eqref{FiNaL-EqUa2} follows from two inequalities whose the proofs use the same method as in  \eqref{FiNaL-EqUa1}.  For each $t>0$ and each $z\in W^{1,p}_\mu(\Omega;\RR^m)$, let $m^t_z:\mathcal{O}(\Omega)\to[0,\infty]$ be given by 
$$
{\rm m}^t_z(A):=\inf\left\{E_t(w,A):w-z\in W^{1,p}_{\mu,0}(A;\RR^m)\right\},
$$
where we recall that $E_t(w,A):=\int_AL_t(x,\nabla_\mu w(x))d\mu(x)$. Note that:
\begin{trivlist}
\item $\displaystyle \underline{\rm m}_z(\cdot):=\liminf_{t\to\infty}{\rm m}_z^t(\cdot)$ 
\item  (resp. $\displaystyle\overline{\rm m}_z(\cdot):=\limsup_{t\to\infty}{\rm m}_z^t(\cdot))$. 
\end{trivlist}

\begin{proof}[\bf Proof of (\ref{FiNaL-EqUa1})]
Fix any $\eps>0$. Fix any $s\in]0,1[$ and any $\rho\in]0,\eps[$. By definition of ${\rm m}^t_u(Q_{s\rho}(x))$, where there is no loss of generality in assuming that $\mu(\partial Q_{s\rho}(x))=0$, there exists $w:\Omega\to\RR^m$ such that $w-u\in W^{1,p}_{\mu,0}(Q_{s\rho}(x);\RR^m)$ and 
\begin{equation}\label{FunDaMenTal-IneQualiTY-Final}
\int_{Q_{s\rho}(x)} L_t(y,\nabla_\mu w(y))d\mu(y)\leq {\rm m}^t_u(Q_{s\rho}(x))+\eps\mu(Q_{s\rho}(x)).
\end{equation}
From Proposition \ref{Fundamental-Proposition-for-CalcVar-in-MMS}(e) there exists a Uryshon function $\varphi\in{\rm Lip}(\Omega)$ for the pair $(\Omega\setminus Q_{\rho}(x),\overline{Q}_{s\rho}(x))$ such that 
\begin{equation}\label{PlAtEAu-FunCtIOn-ProPerTy}
\|D_\mu\varphi\|_{L^\infty_\mu(\Omega;\RR^N)}\leq {\gamma\over\rho(1-s)}
\end{equation} 
for some $\gamma>0$ (which does not depend on $\rho$). Define $v\in W^{1,p}_{\mu}(Q_\rho(x);\RR^m)$ by
$$
v:=\varphi u+(1-\varphi)u_x.
$$
Then $v-u_x\in W^{1,p}_{\mu,0}(Q_\rho(x);\RR^m)$. Using Theorem \ref{cheeger-theorem}(d) and \eqref{Mu-der-Prod} we have
$$
\nabla_\mu v=\left\{
\begin{array}{ll}
\nabla_{\mu}u&\hbox{in }\overline{Q}_{s\rho}(x)\\
D_\mu\varphi\otimes(u-u_x)+\varphi\nabla_\mu u+(1-\varphi)\nabla_\mu u(x)&\hbox{in }Q_\rho(x)\setminus \overline{Q}_{s\rho}(x).
\end{array}
\right.
$$
As $w-u\in W^{1,p}_{\mu,0}(Q_{s\rho}(x);\RR^m)$ we have $v+(w-u)-u_x\in W^{1,p}_{\mu,0}(Q_\rho(x);\RR^m)$. Noticing that $\mu(\partial Q_{s\rho}(x))=0$ and, because of Proposition \eqref{Fundamental-Proposition-for-CalcVar-in-MMS}(a), $\nabla_\mu (w-u)(y)=0$ for $\mu$-a.e. $y\in Q_\rho(x)\setminus \overline{Q}_{s\rho}(x)$ and taking \eqref{FunDaMenTal-IneQualiTY-Final}, the right inequality in \eqref{Hyp1} and \eqref{PlAtEAu-FunCtIOn-ProPerTy} into account we deduce that
\begin{eqnarray*}
{{\rm m}^t_{u_x}(Q_{\rho}(x))\over\mu(Q_{s\rho}(x))}&\leq&{1\over \mu(Q_{s\rho}(x))}\int_{Q_\rho(x)}L_t(y,\nabla_\mu v+\nabla_\mu (w-u))d\mu\label{FiRsTEquAtIOnofTHelaSTPrOOf}\\
&=&{1\over\mu(Q_{s\rho}(x))}\int_{\overline{Q}_{s\rho}(x)}L_t(y,\nabla_\mu u+\nabla_\mu (w-u))d\mu\nonumber\\
&&+{1\over\mu(Q_{s\rho}(x))}\int_{Q_\rho(x)\setminus \overline{Q}_{s\rho}(x)}L_t(y,\nabla_\mu v)d\mu\nonumber\\
&\leq &{{\rm m}^t_{u}(Q_{s\rho}(x))\over\mu(Q_{s\rho}(x))}+\eps\nonumber\\
&& +2^{2p}\beta\left({\gamma^p\over(1-s)^p}{\mu(Q_\rho(x))\over \mu(Q_{s\rho}(x))}{1\over\rho^p}\mint_{Q_\rho(x)}|u-u_x|^pd\mu+{A_{\rho,s}\over\mu(Q_{s\rho}(x))}\right)\nonumber
\end{eqnarray*}
with
$$
A_{\rho,s}:=\mu(Q_\rho(x)\setminus Q_{s\rho}(x))|\nabla_\mu u(x)|^p+\int_{Q_\rho(x)\setminus Q_{s\rho}(x)}|\nabla_\mu u|^pd\mu.
$$
Thus, noticing that $\mu(Q_\rho(x))\geq \mu(Q_{s\rho}(x))$ and letting $t\to\infty$, we obtain
\begin{eqnarray}
{\underline{\rm m}_{u_x}(Q_\rho(x))\over\mu(Q_\rho(x))}&\leq &{\underline{\rm m}_u(Q_{s\rho}(x))\over\mu(Q_{s\rho}(x))}+\eps\label{FiRsTEquAtIOnofTHelaSTPrOOf}\\
&&+2^{2p}\beta\left({\gamma^p\over(1-s)^p}{\mu(Q_\rho(x))\over \mu(Q_{s\rho}(x))}{1\over\rho^p}\mint_{Q_\rho(x)}|u-u_x|^pd\mu+{A_{\rho,s}\over\mu(Q_{s\rho}(x))}\right).\nonumber
\end{eqnarray}
On the other hand, as $\mu$ is a doubling measure we can assert that
$$
\lim_{r\to0}\mint_{Q_r(x)}\big||\nabla_\mu u(y)|^p-|\nabla_\mu u(x)|^p\big|d\mu(y)=0.
$$
But
\begin{eqnarray*}
{A_{\rho,s}\over\mu(Q_{s\rho}(x))}&\leq& 2\left({\mu(Q_{\rho}(x))\over\mu(Q_{s\rho}(x))}-1\right)|\nabla_\mu u(x)|^p\\
&&+{\mu(Q_\rho(x))\over \mu(Q_{s\rho}(x))}\mint_{Q_\rho(x)}\big||\nabla_\mu u(y)|^p-|\nabla_\mu u(x)|^p\big|d\mu(y)
\end{eqnarray*}
and so
\begin{equation}\label{LiMItiNrhOofArhot}
\limsup_{\rho\to0}{A_{\rho,s}\over\mu(Q_{s\rho}(x))}\leq  2\left(\limsup_{\rho\to0}{\mu(Q_{\rho}(x)\over\mu(Q_{s\rho}(x))}-1\right)|\nabla_\mu u(x)|^p. 
\end{equation}
Letting $\rho\to 0$ in \eqref{FiRsTEquAtIOnofTHelaSTPrOOf} and using \eqref{FinALAssuMpTIOnTwo} and  \eqref{LiMItiNrhOofArhot} we see that
\begin{eqnarray*}
\limsup_{\rho\to0}{\underline{\rm m}_{u_x}(Q_\rho(x))\over\mu(Q_\rho(x))}&\leq& \limsup_{\rho\to0}{\underline{\rm m}_{u}(Q_{s\rho}(x))\over\mu(Q_{s\rho}(x))}+\eps+2\left(\limsup_{\rho\to0}{\mu(Q_{\rho}(x)\over\mu(Q_{s\rho}(x))}-1\right)|\nabla_\mu u(x)|^p\\
&=& \limsup_{\rho\to0}{\underline{\rm m}_{u}(Q_{\rho}(x))\over\mu(Q_{\rho}(x))}+\eps+2\left(\limsup_{\rho\to0}{\mu(Q_{\rho}(x)\over\mu(Q_{s\rho}(x))}-1\right)|\nabla_\mu u(x)|^p.
\end{eqnarray*}
Letting $s\to 1$ and using \eqref{DoublINgAssUMpTiON} we conclude that
$$
\limsup_{\rho\to0}{\underline{\rm m}_{u_x}(Q_\rho(x))\over\mu(Q_\rho(x))}\leq \limsup_{\rho\to0}{\underline{\rm m}_{u}(Q_\rho(x))\over\mu(Q_\rho(x))}+\eps
$$
and \eqref{FiNaL-EqUa1} follows by letting $\eps\to0$.
\end{proof}


\section{Proof of homogenization theorems}

This section is devoted to the proof of Theorems \ref{Coro-Homogenization} and \ref{Coro-Homogenization2}. We begin by proving Theorem \ref{ST-MMspace}.

\begin{proof}[\bf Proof of Theorem \ref{ST-MMspace}]
Fix $Q\in \mathfrak{S}(X)$.

\paragraph{\bf Case 1: $(X,d,\mu)$  is assumed to be a meshable $(G,\{h_t\}_{t>0})$-metric measure space which is asymptotically periodic with respect to $\mathfrak{S}(X)$}

Fix $k\in\NN^*$ and consider $t_{Q,k}>0$ given by Definition \ref{Def-appli-hom-2}. To each $t\geq t_{Q,k}$ there correspond $k^-_t,k^+_t\in\NN^*$ and $g^-_t,g^+_t\in G$ such that \eqref{Hyp-Sub-2} and \eqref{Hyp-Sub-1} hold. Fix any $t\geq t_{k,Q}$. Taking the left inclusion in \eqref{Hyp-Sub-2} into account, we see that
$$
h_t(Q)=g^-_t{\hskip0.3mm\rm o\hskip0.3mm}h_{kk^-_t}(\UU)\cup\left(h_t(Q)\setminus g^-_t{\hskip0.3mm\rm o\hskip0.3mm}h_{kk^-_t}(\UU)\right).
$$
As $\mathcal{S}$ is subadditive and $G$-invariant, it follows that 
\begin{equation}\label{SubProof-part-1-A}
\mathcal{S}\left(h_t(Q)\right)\leq\mathcal{S}\left(h_{kk^-_t}(\UU)\right)+\mathcal{S}\left(h_t(Q)\setminus g^-_t{\hskip0.3mm\rm o\hskip0.3mm}h_{kk^-_t}(\UU)\right).
\end{equation}
Taking the right inclusion in \eqref{Hyp-Sub-2} into account, it is easily seen that
$$
h_t(Q)\setminus g^-_t{\hskip0.3mm\rm o\hskip0.3mm}h_{kk^-_t}(\UU)\subset g^+_t{\hskip0.3mm\rm o\hskip0.3mm}h_{kk^+_t}(\UU)\setminus g^-_t{\hskip0.3mm\rm o\hskip0.3mm}h_{kk^-_t}(\UU),
$$ 
hence
$$
\mathcal{S}\left(h_t(Q)\setminus g^-_t{\hskip0.3mm\rm o\hskip0.3mm}h_{kk^-_t}(\UU)\right)\leq c\left(\mu\left(g^+_t{\hskip0.3mm\rm o\hskip0.3mm}h_{kk^+_t}(\UU)\right)-\mu\left(g^-_t{\hskip0.3mm\rm o\hskip0.3mm}h_{kk^-_t}(\UU)\right)\right)
$$
with $c>0$ given by  \eqref{Sub4}, and so
$$
\mathcal{S}\left(h_t(Q)\setminus g^-_t{\hskip0.3mm\rm o\hskip0.3mm}h_{kk^-_t}(\UU)\right)\leq c\left(\mu\left(h_{kk^+_t}(\UU)\right)-\mu\left(h_{kk^-_t}(\UU)\right)\right)
$$
because $\mu$ is $G$-invariant. From \eqref{Eq-sub-im} and \eqref{Eq-sub-1-bis} it follows that
\begin{equation}\label{SubProof-part-1-B}
\mathcal{S}\left(h_t(Q)\setminus g^-_t{\hskip0.3mm\rm o\hskip0.3mm}h_{kk^-_t}(\UU)\right)\leq c\mu(h_k(\UU))\big[\mu(h_{k^+_t}(\UU))-\mu(h_{k^-_t}(\UU))\big].
\end{equation}
Moreover, since $\mathcal{S}$ is subadditive and $G$-invariant, taking \eqref{Eq-sub-6} and \eqref{Equa-Sub-1-Bis-Bis} into account, we can assert that
\begin{equation}\label{SubProof-part-1-C}
\mathcal{S}(h_{kk^+_t}(\UU))\leq\sum_{g\in G^k_{k^+_t}}\mathcal{S}\left(g{\hskip0.3mm\rm o\hskip0.3mm}h_k(\UU)\right)=\mu\big(h_{k^+_t}(\UU)\big)\mathcal{S}\left(h_k(\UU)\right).
\end{equation}
From \eqref{SubProof-part-1-A}, \eqref{SubProof-part-1-B} and \eqref{SubProof-part-1-C} we deduce that
$$
\mathcal{S}\left(h_t(Q)\right)\leq \mu\big(h_{k^+_t}(\UU)\big)\mathcal{S}\left(h_k(\UU)\right)+c\mu(h_k(\UU))\big[\mu(h_{k^+_t}(\UU))-\mu(h_{k^-_t}(\UU))\big].
$$
As $\mu$ is $G$-invariant, from the left inclusion in \eqref{Hyp-Sub-2} and \eqref{Eq-sub-1-bis} we see that 
$$
\mu(h_t(Q))\geq \mu(h_k(\UU))\mu(h_{k^-_t}(\UU)).
$$
Hence
$$
{\mathcal{S}\left(h_t(Q)\right)\over\mu\left(h_t(Q)\right)}\leq {\mu(h_{k^+_t}(\UU))\over\mu(h_{k^-_t}(\UU))}{\mathcal{S}\left(h_k(\UU)\right)\over\mu(h_k(\UU))}+c\left({\mu(h_{k^+_t}(\UU))\over\mu(h_{k^-_t}(\UU))}-1\right).
$$
Letting $t\to\infty$ and using \eqref{Hyp-Sub-1}, and then passing to the infimum on $k$, we obtain
$$
\limsup_{t\to\infty}{\mathcal{S}\left(h_t(Q)\right)\over\mu\left(h_t(Q)\right)}\leq \inf_{k\in\NN^*}{\mathcal{S}\left(h_k(\UU)\right)\over\mu(h_k(\UU))}.
$$
Consider now $t_{1,Q}>0$ given by Definition \ref{Def-appli-hom-2} with $k=1$. Taking the right inclusion in \eqref{Hyp-Sub-2} (with $k=1$) into account, we see that
$$
g^+_t{\hskip0.3mm\rm o\hskip0.3mm}h_{k^+_t}(\UU)=h_t(Q)\cup\left(g^+_t{\hskip0.3mm\rm o\hskip0.3mm}h_{k^+_t}(\UU)\setminus h_t(Q)\right). 
$$
As $\mathcal{S}$ is subadditive and $G$-invariant, it follows that 
\begin{equation}\label{SubProof-part-1-D}
\mathcal{S}(h_{k^+_t}(\UU))\leq\mathcal{S}\left(h_t(Q)\right)+\mathcal{S}\left(g^+_t{\hskip0.3mm\rm o\hskip0.3mm}h_{k^+_t}(\UU)\setminus h_t(Q)\right).
\end{equation}
By \eqref{Hyp-Sub-2} (with $k=1$) we have
$$
g^+_t{\hskip0.3mm\rm o\hskip0.3mm}h_{k^+_t}(\UU)\setminus h_t(Q)\subset g^+_t{\hskip0.3mm\rm o\hskip0.3mm}h_{k^+_t}(\UU)\setminus g^-_t{\hskip0.3mm\rm o\hskip0.3mm}h_{k^-_t}(\UU),
$$ 
and using \eqref{Sub4} we obtain
\begin{equation}\label{SubProof-part-1-E}
\mathcal{S}\left(g^+_t{\hskip0.3mm\rm o\hskip0.3mm}h_{k^+_t}(\UU)\setminus h_t(Q)\right)\leq c\big(\mu(h_{k^+_t}(\UU))-\mu(h_{k^-_t}(\UU))\big).
\end{equation}
From \eqref{SubProof-part-1-D} and \eqref{SubProof-part-1-E} we deduce that
$$
\mathcal{S}(h_{k^+_t}(\UU))\leq\mathcal{S}\left(h_t(Q)\right)+c\big(\mu(h_{k^+_t}(\UU))-\mu(h_{k^-_t}(\UU))\big),
$$
Since $\mu$ is $G$-invariant, from the right inequality in \eqref{Hyp-Sub-2} (with $k=1$), we have 
$$
\mu(h_t(Q))\leq \mu(h_{k^+_t}(\UU)).
$$ 
Hence
$$
\inf_{k\in\NN^*}{\mathcal{S}\left(h_k(\UU)\right)\over\mu(h_k(\UU))}\leq {\mathcal{S}(h_{k^+_t}(\UU))\over\mu(h_{k^+_t}(\UU))}\leq{\mathcal{S}\left(h_t(Q)\right)\over\mu\left(h_t(Q)\right)}+c\left(1-{\mu(h_{k^-_t}(\UU))\over \mu(h_{k^+_t}(\UU))}\right).
$$
Letting $t\to\infty$ and using \eqref{Hyp-Sub-1}, we obtain 
$$
\inf_{k\in\NN^*}{\mathcal{S}\left(h_k(\UU)\right)\over\mu(h_k(\UU))}\leq \liminf_{t\to\infty}{\mathcal{S}\left(h_t(Q)\right)\over\mu\left(h_t(Q)\right)},
$$
and the proof of case 1 is complete.
\smallskip
\paragraph{\bf Case 2:  $(X,d,\mu)$ is assumed to be a strongly meshable $(G,\{h_t\}_{t>0})$-metric measure space which is weakly asymptotically periodic with respect to $\mathfrak{S}(X)$} Fix any $k\in\NN^*$ and any $t>0$ and set:
\begin{trivlist}
\item $\displaystyle Q^-_{t,k}:=\cupp\limits_{g\in G^-_{t,k}}g{\hskip0.3mm\rm o\hskip0.3mm}h_k(\UU)$;
\item $\displaystyle Q^+_{t,k}:=\cupp\limits_{g\in G^+_{t,k}}g{\hskip0.3mm\rm o\hskip0.3mm}h_k(\UU)$,
\end{trivlist}
where $G^-_{t,k}$ are $G^+_{t,k}$ are given by Definition \ref{New-def-Revised-version}. By the left inclusion in \eqref{Hyp-Sub-1-revised} we have $Q^-_{t,k}\subset h_t(Q)$ and so $h_t(Q)=Q^-_{t,k}\cup\big(h_t(Q)\setminus Q^-_{t,k}\big)$. Hence
$$
\mathcal{S}(h_t(Q))\leq \mathcal{S}\left(Q^-_{t,k}\right)+\mathcal{S}\left(h_t(Q)\setminus Q^-_{t,k}\right),
$$
and consequently
$$
{\mathcal{S}(h_t(Q))\over\mu(h_t(Q))}\leq {\mathcal{S}\left(Q^-_{t,k}\right)\over\mu\left(Q^-_{t,k}\right)}{\mu\left(Q^-_{t,k}\right)\over\mu(h_t(Q))}+{\mathcal{S}\left(h_t(Q)\setminus Q^-_{t,k}\right)\over\mu(h_t(Q))}.
$$
As $\mathcal{S}$ is subadditive and $G$-invariant (resp. $\mu$ is $G$-invariant) we have
\begin{eqnarray*}
&&\mathcal{S}\left(Q^-_{t,k}\right)\leq {\rm card}\big(G^-_{t,k}\big)\mathcal{S}(h_k(\UU))\\
&&\big(\hbox{resp. }\mu\left(Q^-_{t,k}\right)={\rm card}\big(G^-_{t,k}\big)\mu(h_k(\UU))\big).
\end{eqnarray*}
Moreover, $h_t(Q)\subset Q^+_{t,k}$ by the right inclusion in \eqref{Hyp-Sub-1-revised} which implies that $h_t(Q)\setminus Q^-_{t,k}\subset Q^+_{t,k}\setminus Q^-_{t,k}$ and so
$$
\mathcal{S}\left(h_t(Q)\setminus Q^-_{t,k}\right)\leq c\mu\left(Q^+_{t,k}\setminus Q^-_{t,k}\right)
$$
with $c>0$ given by \eqref{Sub4}. It follows that
\begin{eqnarray*}
{\mathcal{S}(h_t(Q))\over\mu(h_t(Q))}&\leq& {\mathcal{S}\left(h_k(\UU)\right)\over\mu\left(h_k(\UU)\right)}{\mu\left(Q^-_{t,k}\right)\over\mu(h_t(Q))}+{c\mu\left(Q^+_{t,k}\setminus Q^-_{t,k}\right)\over\mu(h_t(Q))}\\
&\leq&{\mathcal{S}\left(h_k(\UU)\right)\over\mu\left(h_k(\UU)\right)}+{c\mu\left(Q^+_{t,k}\setminus Q^-_{t,k}\right)\over\mu(h_t(Q))}
\end{eqnarray*}
because $\mu\left(Q^-_{t,k}\right)\leq \mu(h_t(Q))$ since $Q^-_{t,k}\subset h_t(Q)$. Letting $t\to\infty$ and using \eqref{Hyp-Sub-3-revised}, and then passing to the infimum on $k$, we obtain
$$
\limsup_{t\to\infty}{\mathcal{S}(h_t(Q))\over\mu(h_t(Q))}\leq \inf_{k\in\NN^*}{\mathcal{S}\left(h_k(\UU)\right)\over\mu\left(h_k(\UU)\right)}.
$$
We now prove that
\begin{eqnarray}\label{Final-aim-ST-revised-version}
\inf_{k\in\NN^*}{\mathcal{S}\left(h_k(\UU)\right)\over\mu\left(h_k(\UU)\right)}\leq\liminf_{t\to\infty}{\mathcal{S}(h_t(Q))\over\mu(h_t(Q))}.
\end{eqnarray}
 Fix any $t>0$. As $h_t(Q)\subset Q^+_{t,1}:=\cupp_{g\in G^+_{t,1}}g(\UU)$ by the right inclusion in \eqref{Hyp-Sub-1-revised} we have $Q^+_{t,1}=h_t(Q)\cup\big(Q^+_{t,1}\setminus h_t(Q)\big)$, and so
$$
\mathcal{S}\left(Q^+_{t,1}\right)\leq \mathcal{S}\left(h_t(Q)\right)+\mathcal{S}\left(Q^+_{t,1}\setminus h_t(Q)\right)
$$
because $\mathcal{S}$ is subadditive. But, as $\cupp_{g\in G^-_{t,1}}g(\UU)=:Q^-_{t,1}\subset h_t(Q)$ by the left inclusion in \eqref{Hyp-Sub-1-revised}, we have $Q^+_{t,1}\setminus h_t(Q)\subset Q^+_{t,1}\setminus Q^-_{t,1}$, hence
\begin{equation}\label{ST-Case2-Eq1-revised-version}
{\mathcal{S}\left(Q^+_{t,1}\right)\over \mu\left(Q^+_{t,1}\right)}\leq{\mathcal{S}\left(Q^+_{t,1}\right)\over \mu\left(h_t(Q)\right)}\leq {\mathcal{S}\left(h_t(Q)\right)\over \mu\left(h_t(Q)\right)}+{c\mu\left(Q^+_{t,1}\setminus Q^-_{t,1}\right)\over \mu\left(h_t(Q)\right)}
\end{equation}
by using \eqref{Sub4}. Consider the subclass $\mathcal{K}(X)$ of $\mathcal{B}_0(X)$ given by
$$
\mathcal{K}(X):=\bigg\{\cupp\limits_{g\in H}g(\UU):H\subset G\hbox{ and }{\rm card}(H)<\infty\bigg\}
$$
and define  the set function $\mathcal{S}_1:\mathcal{K}(X)\to]-\infty,0]$ by
$$
\mathcal{S}_1(K):=\mathcal{S}(K)-{\rm card}(H)\mathcal{S}(\UU)
$$
with $K=\cupp_{g\in H}g(\UU)$. Taking the assertion (b) of Definition \ref{New-def-Revised-version-0} into account, as $\mathcal{S}$ is subadditive and $G$-invariant, it is easily seen that $\mathcal{S}_1$ is decreasing, i.e., for every $K\in\mathcal{K}(X)$ and every $K^\prime\in\mathcal{K}(X)$,
\begin{equation}\label{Revised-Version-Eq-Sub-ST-1}
K\subset K^\prime\hbox{ implies } \mathcal{S}_1\left(K\right)\geq \mathcal{S}_1\left(K^\prime\right).
\end{equation}
Noticing that $Q^+_{t,1}\in \mathcal{K}(X)$, as $\mu$ is $G$-invariant we can assert that
\begin{equation}\label{Revised-Version-Eq-Sub-ST-2}
{\mathcal{S}\left(Q^+_{t,1}\right)\over \mu\left(Q^+_{t,1}\right)}={\mathcal{S}_1\left(Q^+_{t,1}\right)\over \mu\left(Q^+_{t,1}\right)}+{\mathcal{S}(\UU)\over\mu(\UU)}.
\end{equation}
On the other hand, by the assertion (d) of Definition \ref{New-def-Revised-version-0} (with $H=G^+_{t,1}\subset G$) there exist $i_t\in\NN^*$ and $f_t\in G$ such that
$$
Q^+_{t,1}\subset f_t{\hskip0.3mm\rm o\hskip0.3mm}h_{i_t}(\UU).
$$
Thus, using \eqref{Revised-Version-Eq-Sub-ST-1}, from \eqref{Revised-Version-Eq-Sub-ST-2} we obtain
\begin{equation}\label{Revised-Version-Eq-Sub-ST-3}
{\mathcal{S}\left(Q^+_{t,1}\right)\over \mu\left(Q^+_{t,1}\right)}\geq{\mathcal{S}_1\left(f_t{\hskip0.3mm\rm o\hskip0.3mm}h_{i_t}(\UU)\right)\over \mu\left(f_t{\hskip0.3mm\rm o\hskip0.3mm}h_{i_t}(\UU)\right)}+{\mathcal{S}(\UU)\over\mu(\UU)}\geq \inf_{(f,i)\in G\times\NN^*}{\mathcal{S}_1\left(f{\hskip0.3mm\rm o\hskip0.3mm}h_{i}(\UU)\right)\over \mu\left(f{\hskip0.3mm\rm o\hskip0.3mm}h_{i}(\UU)\right)}+{\mathcal{S}(\UU)\over\mu(\UU)}.
\end{equation}
But, using the assertion (c) of Definition \ref{New-def-Revised-version-0}, we see that for each $f\in G$ and each $i\in\NN^*$ we have $f{\hskip0.3mm\rm o\hskip0.3mm}h_{i}(\UU)=\cupp_{g\in G_{i}(f)}g(\UU)$. So, as $\mathcal{S}$ and $\mu$ are $G$-invariant, we get
\begin{eqnarray*}
{{\mathcal{S}_1\left(f{\hskip0.3mm\rm o\hskip0.3mm}h_{i}(\UU)\right)\over \mu\left(f{\hskip0.3mm\rm o\hskip0.3mm}h_{i}(\UU)\right)}}&=&{{\mathcal{S}\left(f{\hskip0.3mm\rm o\hskip0.3mm}h_{i}(\UU)\right)\over \mu\left(f{\hskip0.3mm\rm o\hskip0.3mm}h_{i}(\UU)\right)}}-{\rm card}(G_{i}(f)){\mathcal{S}(\UU)\over\mu\left(f{\hskip0.3mm\rm o\hskip0.3mm}h_{i}(\UU)\right)}\\
&\geq&{{\mathcal{S}\left(h_{i}(\UU)\right)\over \mu\left(h_{i}(\UU)\right)}}-{\mathcal{S}(\UU)\over\mu(\UU)}\\
&\geq&\inf_{k\in\NN^*}{{\mathcal{S}\left(h_{k}(\UU)\right)\over \mu\left(h_{k}(\UU)\right)}}-{\mathcal{S}(\UU)\over\mu(\UU)}
\end{eqnarray*}
for all $f\in G$ and all $i\in\NN^*$, and consequently
\begin{equation}\label{Revised-Version-Eq-Sub-ST-4}
\inf_{(f,i)\in G\times\NN^*}{\mathcal{S}_1\left(f{\hskip0.3mm\rm o\hskip0.3mm}h_{i}(\UU)\right)\over \mu\left(f{\hskip0.3mm\rm o\hskip0.3mm}h_{i}(\UU)\right)}\geq \inf_{k\in\NN^*}{{\mathcal{S}\left(h_{k}(\UU)\right)\over \mu\left(h_{k}(\UU)\right)}}-{\mathcal{S}(\UU)\over\mu(\UU)}.
\end{equation}
Combining \eqref{ST-Case2-Eq1-revised-version} with \eqref{Revised-Version-Eq-Sub-ST-3} and with \eqref{Revised-Version-Eq-Sub-ST-4}, we deduce that 
$$
\inf_{k\in\NN^*}{{\mathcal{S}\left(h_{k}(\UU)\right)\over \mu\left(h_{k}(\UU)\right)}}\leq{\mathcal{S}\left(h_t(Q)\right)\over \mu\left(h_t(Q)\right)}+{c\mu\left(Q^+_{t,1}\setminus Q^-_{t,1}\right)\over \mu\left(h_t(Q)\right)},
$$
and \eqref{Final-aim-ST-revised-version} follows by letting $t\to\infty$ and using \eqref{Hyp-Sub-3-revised}. 
\end{proof}

\begin{proof}[\bf Proof of Theorem \ref{Coro-Homogenization}] The proof consists of applying Corollary \ref{Coro-MainResult}. For this, it suffices to verify that \eqref{Equality-Coro} is satisfied. 

For each $\xi\in\MM$, we consider the set function $\mathcal{S}^\xi:\mathcal{B}_0(X)\to[0,\infty]$ defined by
$$
\mathcal{S}^\xi(A):=\inf\left\{\int_{\mathring{A}}L(y,\xi+\nabla_\mu w(y))d\mu(y):w\in W^{1,p}_{\mu,0}\big(\mathring{A};\RR^m\big)\right\}.
$$
As $\{L_t\}_{t>0}$ is a family of $(G,\{h_t\}_{t>0})$-periodic integrands modelled on $L$ (see Definition \ref{mu-PeriOdiciTY-Def}), we have
\begin{eqnarray*}
\mathcal{S}^\xi\left(h_t(Q)\right)&=&\inf\left\{\int_{h_t(Q)}L(y,\xi+\nabla_\mu w(y))d\mu(y):w\in W^{1,p}_{\mu,0}(h_t(Q);\RR^m)\right\}\\
&=&\inf\left\{\int_{Q}L(h_t(y),\xi+\nabla_\mu w(h_t(y)))d(h_t^\sharp\mu)(y):w\in W^{1,p}_{\mu,0}(h_t(Q);\RR^m)\right\}\\
&=&\mu(h_t(Q))\inf\left\{\mint_{Q}L_t(y,\xi+\nabla_\mu w(y))d\mu(y):w\in W^{1,p}_{\mu,0}(Q;\RR^m)\right\}
\end{eqnarray*}
for all $Q\in{\rm Ba}(X)$ and all $t>0$, and so: 
\begin{trivlist}
\item $\displaystyle\liminf_{t\to\infty}\mathcal{H}^\rho_\mu L_t(x,\xi)=\liminf_{t\to\infty}{\mathcal{S}^\xi\left(h_t(Q_\rho(x)\right)\over\mu\left(h_t(Q_\rho(x)\right)}$;
\item $\displaystyle\limsup_{t\to\infty}\mathcal{H}^\rho_\mu L_t(x,\xi)=\limsup_{t\to\infty}{\mathcal{S}^\xi\left(h_t(Q_\rho(x)\right)\over\mu\left(h_t(Q_\rho(x)\right)}$
\end{trivlist}
for $\mu$-a.e. $x\in \Omega$, all $\rho>0$ and all $\xi\in\MM$. But, from the second inequality in \eqref{Hyp1}, it is easy to see that $\mathcal{S}^\xi(A)\leq c\mu\big(\mathring{A}\big)\leq c\mu(A)$ for all $A\in\mathcal{B}_0(X)$, where $c:=\beta(1+|\xi|^p)$, and moreover the set function  $\mathcal{S}^\xi$ is clearly $G$-invariant and subadditive because, for each $A, B\in\mathcal{B}_0(X)$, $\mu\Big(\mathring{\widehat{A \cup B}}\setminus(\mathring{A}\cup\mathring{B})\Big)=0$ since $\mathring{\widehat{A \cup B}}\setminus(\mathring{A}\cup\mathring{B})\subset \partial A\cup\partial B$ and $\mu(\partial A)=\mu(\partial B)=0$. Thus, by Theorem \ref{ST-MMspace} we see that 
 $$
 \lim_{t\to\infty}{\mathcal{S}^\xi\left(h_t(Q_\rho(x)\right)\over\mu\left(h_t(Q_\rho(x)\right)}=\inf_{k\in\NN^*}{S^\xi\left(h_k(\UU)\right)\over\mu(h_k(\UU))}=L_{\rm hom}(\xi),
 $$
which means that $
\liminf_{t\to\infty}\mathcal{H}^\rho_\mu L_t(x,\xi)=\limsup_{t\to\infty}\mathcal{H}^\rho_\mu L_t(x,\xi)=L_{\rm hom}(\xi)
$ 
for $\mu$-a.e. $x\in \Omega$, all $\rho>0$ and all $\xi\in\MM$, i.e., \eqref{Equality-Coro} holds, and finishes the proof. 
\end{proof}

\begin{proof}[\bf Proof of Theorem \ref{Coro-Homogenization2}]
Under the hypotheses of Theorem \ref{Coro-Homogenization2} it is easy to see that, by using Theorem \ref{MainTheorem}, we have:
\begin{trivlist}
\item $\displaystyle\Gamma(L^p_\mu)\hbox{-}\displaystyle\liminf\limits_{t\to\infty} E_t(u;A)\geq\sum_{i\in I}\int_{\Omega_i\cap A}\limsup_{\rho\to 0}\liminf_{t\to\infty}\mathcal{H}^\rho_\mu L_t^i(x,\nabla_\mu u(x))d\mu(x)$;
\item $\displaystyle\Gamma(L^p_\mu)\hbox{-}\displaystyle\limsup\limits_{t\to\infty} E_t(u;A)=\sum_{i\in I}\int_{\Omega_i\cap A}\lim_{\rho\to 0}\limsup_{t\to\infty}\mathcal{H}^\rho_\mu L_t^i(x,\nabla_\mu u(x))d\mu(x)$
\end{trivlist}
for all $u\in W^{1,p}(\Omega;\RR^m)$ and all $A\in\mathcal{O}(\Omega)$. Under these hypotheses, it is also easily seen that Theorem \ref{ST-MMspace} implies that for each $i\in I$, 
$$
\liminf_{t\to\infty}\mathcal{H}^\rho_\mu L_t^i(x,\xi)=\limsup_{t\to\infty}\mathcal{H}^\rho_\mu L_t^i(x,\xi)=L^i_{\rm hom}(\xi)
$$
for $\mu$-a.e. $x\in \Omega_i\cap A$, all $\rho>0$ and all $\xi\in\MM$ with $L_{\rm hom}^i$ given by \eqref{HomoGenizaTion-Formula-LDS}, which gives the result.
\end{proof}


\end{document}